\theoremstyle{plain}
\newtheorem*{bigtheo}{Théorème}
\newtheorem{theo}{Théorème}[section]
\newtheorem{prop}[theo]{Proposition}
\newtheorem{lemm}[theo]{Lemme}
\newtheorem{ex}[theo]{Exemple}
\newtheorem*{hypo}{Hypothèse}
\theoremstyle{definition}
\newtheorem{defi}[theo]{Définition}
\theoremstyle{remark}
\newtheorem{rema}[theo]{Remarque}
\author{Stéphane Bijakowski}
\title{Classicité de formes modulaires surconvergentes}
\begin{document}

\maketitle

\begin{abstract}
Nous prouvons un résultat de classicité pour les formes modulaires surconvergentes sur les variétés de Shimura PEL de type (A) ou (C) associées à un groupe réductif non ramifié sur $\mathbb{Q}_p$. Nous utilisons pour démontrer ce résultat la méthode du prolongement analytique, initialement développée par Buzzard et Kassaei.
\end{abstract}

\selectlanguage{english}
\begin{abstract}
We prove in this paper a classicality result for overconvergent modular forms on PEL Shimura varieties of type (A) or (C) associated to an unramified reductive group on $\mathbb{Q}_p$. To get this result, we use the analytic continuation method, first used by Buzzard and Kassaei.
\end{abstract}

\selectlanguage{french}

\tableofcontents

\section*{Introduction}
\addcontentsline{toc}{section}{Introduction}

Coleman ($\cite{Co}$) a prouvé qu'une forme modulaire surconvergente sur la courbe modulaire, de niveau iwahorique en $p$, de poids $k$ entier, propre pour un certain opérateur de Hecke $U_p$, était classique si la pente, c'est-à-dire la valuation de la valeur propre pour l'opérateur de Hecke (normalisée de telle façon que $v(p)=1$), était inférieure à $k-1$. Ce résultat a été obtenu grâce à une connaissance approfondie de la cohomologie rigide de la courbe modulaire. Des travaux de Buzzard ($\cite{Bu}$) et de Kassaei ($\cite{Ka}$), utilisant des techniques de prolongement analytique, ont donné une nouvelle démonstration de ce théorème. Ainsi, Buzzard a étudié la dynamique de l'opérateur de Hecke, et montré que ses itérés accumulaient le tube supersingulier dans un voisinage strict arbitrairement petit du lieu ordinaire multiplicatif. Une forme modulaire surconvergente étant définie sur un tel voisinage strict, l'équation $f = a_p^{-1} U_p f$ permet de prolonger $f$ au tube supersingulier dès que la pente est non nulle. \\
\indent La théorie du sous-groupe canonique a ensuite permis à Kassaei de décomposer l'opérateur de Hecke sur le lieu ordinaire-étale (et même sur un voisinage strict de celui-ci) en $U_p = U_p^{good} + U_p^{bad}$, où $U_p^{good}$ paramètre les supplémentaires du sous-groupe universel ne rencontrant pas le sous-groupe canonique, et $U_p^{bad}$ l'unique supplémentaire égal au sous-groupe canonique. L'opérateur~$U_p^{good}$ est à valeurs dans un voisinage strict du tube ordinaire-multiplicatif, donc agit sur les formes modulaires surconvergentes, alors que $U_p^{bad}$ stabilise le lieu ordinaire-étale. Kassaei a alors défini une série $f_n$, approximant $a_p^{-n} U_p^n f$. Plus précisément, on a, lorsque cela a un sens,~$f_n = a_p^{-n} U_p^n f - a_p^{-n} (U_p^{bad})^n f$. La condition $v(a_p) < k-1$ implique alors que la norme de l'opérateur $a_p^{-1} U_p^{bad}$ est strictement inférieure à $1$, et donc que la série $f_n$ converge sur le lieu ordinaire-étale. Cela permet donc d'étendre $f$ à toute la courbe modulaire. \\
\indent Depuis ces travaux, de nombreux résultats de classicité ont été obtenus. La méthode originale de Coleman, qui étudie la cohomologie de la courbe modulaire, a récemment été utilisée par Johansson ($\cite{Jo}$) ainsi que par Tian et Xiao ($\cite{T-X}$) pour obtenir un résultat pour les formes modulaires de Hilbert. La méthode du prolongement analytique a également été utilisée dans plusieurs travaux pour obtenir des résultats de classicité. Citons par exemple les résultats de Sasaki ($\cite{Sa}$), Tian ($\cite{Ti}$) et de Pilloni-Stroh ($\cite{P-S 1}$) pour les formes modulaires de Hilbert. Dans \cite{P-S 2}, Pilloni et Stroh ont obtenu un résultat de classicité pour les formes modulaires sur une variété de Shimura PEL associée à un groupe réductif déployé sur $\mathbb{Q}_p$. Nous démontrons ici un théorème dans le cas où l'algèbre simple de la variété de Shimura sur $\mathbb{Q}_p$ est un produit d'algèbres de matrices à coefficients dans une extension non ramifiée de $\mathbb{Q}_p$. 

\begin{bigtheo}
Soit $p$ un nombre premier, et $X$ une variété de Shimura de type $(A)$ ou $(C)$. On suppose que $p$ est non ramifié dans le corps complexe associé à cette variété, et que sur $\mathbb{Q}_p$, la variété de Shimura est associée à un produit d'algèbres de matrices. Soit $f$ une forme modulaire surconvergente sur $X$ de poids $\kappa$. On suppose que $f$ est propre pour une famille d'opérateurs de Hecke $(U_i)$, de valeurs propres $(a_i)$. Si le poids $\kappa$ est suffisamment grand devant la famille des $(v(a_i))$, alors $f$ est classique.
\end{bigtheo}

Dans le cas des formes modulaires associées à un groupe sur $\mathbb{Q}$ dont la restriction à~$\mathbb{Q}_p$ est  $Res_{\mathbb{Q}_{p^d}/\mathbb{Q}_p} GSp_{2g} $ ($\mathbb{Q}_{p^d}$ est l'unique extension non ramifiée de degré $d$ de $\mathbb{Q}_p$), nous obtenons le théorème suivant.

\begin{bigtheo}
Soient $F$ un corps totalement réel de degré $d$ dans lequel $p$ est inerte et \\
$\Sigma_p=Hom(F,\mathbb{C}_p)$ (où $\mathbb{C}_p$ est la complétion d'une clôture algébrique de $\mathbb{Q}_p$). Soit $f$ une forme modulaire surconvergente de poids $\kappa = (k_{g,i} \leq \dots \leq~k_{1,i})_{i \in \Sigma_p}$ (voir $\ref{formcla}$ pour la définition précise), propre pour $U_p$ (défini dans $\ref{defHecke}$) avec la valeur propre $a_p$. Supposons que 
$$v(a_p) + \frac{dg(g+1)}{2} < \inf_i k_{g,i}$$
Alors $f$ est classique.
\end{bigtheo}

Ce théorème reste vrai si on suppose simplement $p$ non ramifié dans $F$ : il y a alors autant d'opérateurs de Hecke que de places au-dessus de $p$, et donc de conditions sur les pentes pour que la forme modulaire soit classique. \\
Décrivons maintenant la méthode de notre démonstration. La forme modulaire $f$ étant propre pour l'opérateur de Hecke, elle vérifie l'équation fonctionnelle $f= a_p^{-1} U_p f$. Comme dans le cas de la courbe modulaire, nous cherchons à étendre le domaine de définition de $f$ à l'aide de cette équation. Dès que nous rencontrons une zone où le prolongement n'est pas automatique, nous décomposons l'opérateur de Hecke $U_p$ en $U_p^{good} + U_p^{bad}$, où $U_p^{good}$ a son image incluse dans l'espace de définition de $f$, et où $U_p^{bad}$ stabilise la zone en question. Plus précisément, si $x=(A,H)$ est un point de l'espace rigide $X_{rig}$, nous allons distinguer les supplémentaires de $H$, selon que le quotient par ce supplémentaire donne un point où $f$ est déjà définie, ou un point dans la zone qui pose problème. Pour effectuer cette décomposition de l'opérateur de Hecke, nous devons donc stratifier notre zone suivant le nombre de \og mauvais \fg $ $ supplémentaires. Une fois cette décomposition effectuée, nous pouvons alors définir l'analogue des séries utilisées par Kassaei pour étendre le domaine de définition de la forme modulaire. Le fait que ces séries convergent provient de la condition reliant le poids et la pente, qui assure que les normes des opérateurs $a_p^{-1} U_p^{bad}$ soient strictement inférieures à $1$. \\
La méthode utilisée ici est très générale, et permet d'obtenir un résultat similaire pour les variétés de Shimura PEL de type (A) associées à des groupes unitaires. Plus précisément, nous obtenons le théorème suivant.

\begin{bigtheo}
Soit $F_0$ un corps totalement réel de degré $d$ dans lequel $p$ est inerte, $F$ une extension CM de $F_0$ dans lequel $p$ est non ramifié. Nous considérons la variété de Shimura PEL associée à un groupe unitaire relatif à $F/F_0$ de signature $((a,b), \dots, (a,b))$ en $p$ si $p$ est décomposé dans $F$, et de signature $((a,a), \dots, (a,a))$ si $p$ est inerte dans $F$ (dans ce cas on note $b=a$). Soit $\Sigma_p=Hom(F_0,\overline{\mathbb{Q}_p})$  et $f$ une forme modulaire surconvergente de poids $\kappa=~((k_{1,i} \geq \dots \geq k_{a,i}),(l_{1,i} \geq~\dots \geq~l_{b,i}))_{i \in \Sigma_p}$ (voir $\ref{defuni}$ pour la définition précise), propre pour $U_p$, de valeur propre $a_p$. Supposons que 
$$ v(a_p) + dab < \inf_i (k_{a,i} + l_{b,i}) $$
Alors $f$ est classique.
\end{bigtheo}

Là encore, le résultat se généralise au cas où le nombre premier $p$ est non ramifié dans $F$. Il y a alors autant d'opérateurs de Hecke et de conditions à vérifier que de places au-dessus de $p$ dans  $F_0$. Il est également vrai pour les variétés de Shimura PEL de type (A) associées à $F/F_0$ si $p$ est non ramifié dans $F$ (voir le théorème $\ref{theogenuni}$). Dans ces cas, des signatures plus générales sont autorisées, mais pour que le problème ait un sens, nous devons supposer que le lieu ordinaire est non vide. D'après Wedhorn (voir $\cite{We}$), cela est équivalent au fait que $p$ soit totalement décomposé dans le corps réflexe $E$ associé à la variété de Shimura. Cette condition impose des relations sur la signature du groupe unitaire dans le cas (A), ce qui justifie les hypothèses du théorème précédent. 

\begin{rema}
Il ne semble pas y avoir d'obstacle majeur à étendre ces résultats au cas $p$ ramifié. La principale difficulté à contourner serait l'absence du principe de Koecher rigide pour la variété de Shimura. Cela fera l'objet d'un travail ultérieur. 
\end{rema}

\begin{rema}
La méthode développée devrait permettre d'étendre les résultats obtenus au cas d'un niveau arbitraire en $p$. Là encore, la principale difficulté provient de l'absence de modèle entier des variétés considérées, et donc du principe de Koecher rigide. L'auteur espère également résoudre ce problème dans un travail à venir. \\
Remarquons que l'extension du résultat de classicité pour les variétés de niveau arbitraire en $p$ a été obtenu dans le cas Hilbert (\cite{P-S 1}), mais pas dans le cas déployé, car la méthode utilisée dans \cite{P-S 2} ne se généralise pas dans ce cas.
\end{rema}

Parlons à présent de l'organisation du texte. La partie $\ref{prelim}$ énonce des résultats de géométrie rigide, et introduit les objets avec lesquels nous allons travailler. La partie $\ref{premHecke}$ est consacrée à l'étude de la correspondance de Hecke, et à la décomposition de cette correspondance sur certaines zones. La partie $\ref{prolonge}$ démontre le théorème de prolongement analytique pour les variétés de Siegel, après avoir défini les formes modulaires classiques et surconvergentes. Les parties $\ref{symplectique}$ et $\ref{unitaire}$ montrent comment notre démonstration peut s'adapter pour prouver un résultat de classicité pour les variétés de Shimura PEL de type respectivement (C) et (A). \\

L'auteur remercie chaleureusement ses directeurs de thèse B. Stroh et P. Boyer, qui l'ont encadré durant l'élaboration de ce travail. Il souhaite également remercier J. Tilouine et V. Pilloni pour l'organisation de divers séminaires. Enfin, il remercie K. Buzzard et ses étudiants pour leur accueil et leur hospitalité.

\section{Préliminaires} \label{prelim}

\subsection{Géométrie rigide} \label{normdef}

Commençons par énoncer plusieurs résultats de géométrie rigide. Nous suivons pour cela \cite{Ka}. Soit $K$ une ectension finie de $\mathbb{Q}_p$. \\

	Soit $\mathfrak{Z}$ un schéma formel admissible (c'est-à-dire plat et topologiquement de type fini) $p$-adique réduit sur Spf $O_K$, de fibre générique $Z_{rig}$. Soit $\mathcal{F}$ un faisceau localement libre de rang fini sur $\mathfrak{Z}$. Soit $\mathcal{F}_{rig}$ le faisceau induit sur $Z_{rig}$. Soit $L$ une extension finie de $K$  et $x : $ Spm $L \to Z_{rig} $ un $L$-point. Il provient d'un unique $O_L$-point, $\tilde{x} : $ Spf $O_L \to \mathfrak{Z}$. On dispose d'une identification naturelle : $ \tilde{x}^* \mathcal{F} \otimes_{O_L} K \simeq x^* \mathcal{F}_{rig}$. Si $ f \in x^* \mathcal{F}_{rig}$, on pose  $|f| = \inf \{ |\lambda |, \lambda \in K^*, \frac{1}{\lambda} f \in \tilde{x}^* \mathcal{F} \}$.
	
\begin{defi} 
Si $\mathcal{U}$ est un ouvert de $Z_{rig}$, et $f \in H^0(\mathcal{U},\mathcal{F}_{rig})$, on pose 
$$|f(x)| = |x^* f | $$
et $|f|_\mathcal{U} = \sup_{x \in \mathcal{U}} |f(x)|$. Cet élément peut éventuellement être infini, mais $|f|_\mathcal{U} < \infty$ si $\mathcal{U}$ est quasi-compact.
\end{defi}

On notera également $\tilde{\mathcal{F}}_{rig}$ le sous faisceau de $\mathcal{F}_{rig}$ des éléments de norme inférieure ou égale à $1$. Nous énonçons ici un lemme (\og gluing lemma \fg), dû à Kassaei, qui montre qu'une section de $\tilde{\mathcal{F}}_{rig}$ est déterminée par ses réductions modulo $p^n$ pour tout $n$.

\begin{lemm} \label{glue}
Supposons $Z_{rig}$ lisse. Soit $\mathcal{U}$ un ouvert quasi-compact de $Z_{rig}$. On a :
$$H^0(\mathcal{U},\mathcal{F}_{rig}) \simeq H^0(\mathcal{U},\tilde{\mathcal{F}}_{rig}) \otimes_{O_K} K \simeq \left( \underset{\leftarrow}{\lim} \text{ } H^0(\mathcal{U},\tilde{\mathcal{F}}_{rig} / p^n) \right) \otimes_{O_K} K$$
\end{lemm}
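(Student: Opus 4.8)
The plan is to treat the two isomorphisms separately, the first being essentially formal and the second carrying the real content. For the first, I would look at the natural arrow $H^0(\mathcal{U},\tilde{\mathcal{F}}_{rig}) \otimes_{O_K} K \to H^0(\mathcal{U},\mathcal{F}_{rig})$. Since $H^0(\mathcal{U},\mathcal{F}_{rig})$ is a $K$-vector space and $H^0(\mathcal{U},\tilde{\mathcal{F}}_{rig})$ sits inside it as a $p$-torsion-free $O_K$-submodule, tensoring with $K$ (which is flat over $O_K$) keeps the arrow injective. Surjectivity is exactly where quasi-compactness enters: for $f \in H^0(\mathcal{U},\mathcal{F}_{rig})$ one has $|f|_{\mathcal{U}} < \infty$, so choosing $\lambda \in K^*$ with $|\lambda| \geq |f|_{\mathcal{U}}$ puts $\lambda^{-1}f$ in $H^0(\mathcal{U},\tilde{\mathcal{F}}_{rig})$ and exhibits $f$ in the image.

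For the second isomorphism I would first dispatch injectivity of the reduction map $H^0(\mathcal{U},\tilde{\mathcal{F}}_{rig}) \to \varprojlim_n H^0(\mathcal{U},\tilde{\mathcal{F}}_{rig}/p^n)$. A section $f$ in the kernel lies in $p^n H^0(\mathcal{U},\tilde{\mathcal{F}}_{rig})$ for every $n$, hence satisfies $|f(x)| \leq |p|^n$ at every point $x$ and every $n$, so $|f(x)| = 0$ for all $x$; as $Z_{rig}$ is reduced and $\mathcal{F}_{rig}$ locally free the pointwise norm is faithful, and $f = 0$. This is the separatedness statement $\bigcap_n p^n H^0(\mathcal{U},\tilde{\mathcal{F}}_{rig}) = 0$, which I would reuse for gluing. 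For surjectivity I would reduce to the affinoid case by choosing, using quasi-compactness, a finite admissible cover $\mathcal{U} = \bigcup_i \mathcal{U}_i$ by affinoids. Granting the affinoid case, a compatible system $(f_n)$ restricts over each $\mathcal{U}_i$ to a system lifting to a section $g^{(i)} \in H^0(\mathcal{U}_i,\tilde{\mathcal{F}}_{rig})$; on the overlaps $\mathcal{U}_i \cap \mathcal{U}_j$ (reduced as admissible opens in $Z_{rig}$) the sections $g^{(i)}$ and $g^{(j)}$ share all reductions mod $p^n$, so the separatedness above forces $g^{(i)} = g^{(j)}$ there, and the sheaf property of $\tilde{\mathcal{F}}_{rig}$ glues them into the desired global section.

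The step I expect to be the main obstacle is the affinoid case $\mathcal{U} = \mathrm{Spm}\, B$ with $B$ reduced. Here I would work on the canonical formal model $\mathrm{Spf}\, B^\circ$, where $B^\circ = \{b \in B : |b|_{\mathcal{U}} \leq 1\}$, the key external input being the Grauert--Remmert finiteness theorem, which guarantees that $B^\circ$ is topologically of finite type over $O_K$ and that the unit ball $\tilde{N}$ of $H^0(\mathcal{U},\mathcal{F}_{rig})$ is a finitely generated $p$-torsion-free $B^\circ$-module, associated to a coherent sheaf $\mathcal{G}$ on the admissible affine formal scheme $\mathrm{Spf}\, B^\circ$. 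Because this formal scheme is affine, higher coherent cohomology vanishes, so $H^0(\mathcal{U},\tilde{\mathcal{F}}_{rig}/p^n)$ is identified with $H^0(\mathrm{Spf}\, B^\circ, \mathcal{G}/p^n\mathcal{G}) = \tilde{N}/p^n\tilde{N}$; passing to the limit and using that $\tilde{N}$ is $p$-adically complete then gives $\varprojlim_n \tilde{N}/p^n\tilde{N} = \tilde{N} = H^0(\mathcal{U},\tilde{\mathcal{F}}_{rig})$. The genuinely delicate points, and the reason one passes to $B^\circ$ rather than to the possibly non-normal model $\mathfrak{Z}$ given at the outset, are the finiteness of $B^\circ$ and $\tilde{N}$ and the identification of the rigid reduction $\tilde{\mathcal{F}}_{rig}/p^n$ with the reduction of the coherent formal model $\mathcal{G}$; once these are secured, the rest is formal and the two displayed isomorphisms follow.
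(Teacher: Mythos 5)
Your argument follows the same route as the paper's proof: boundedness on a quasi-compact open for the first isomorphism, injectivity of the reduction map via the pointwise norm and reducedness, and surjectivity by lifting the compatible system over a finite affinoid cover and gluing the local lifts using the fact that they agree modulo $p^n$ for all $n$. The only difference is that you substantiate the affinoid lifting step (via the canonical model $B^\circ$, Grauert--Remmert finiteness and $p$-adic completeness of the unit ball), which the paper simply asserts, so your write-up is a correct and slightly more detailed version of the same proof.
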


\begin{proof}
Voir \cite{Pi}, Corollaire $5.1$.
\end{proof}

Nous rappelons maintenant quelques propri\'et\'es des morphismes finis \'etales d'espaces analytiques rigides. Pour les d\'efinitions, on pourra se reporter par exemple \`a \cite{Be}. Tous les espaces rigides consid\'er\'es ici seront suppos\'es quasi-s\'epar\'es.

\begin{prop} \label{qqcpt}
Soit $f : X \to Y$ un morphisme fini \'etale d'espaces analytiques rigides quasi-compacts. Alors l'image d'un ouvert par $f$ est un ouvert, et l'image d'un ouvert quasi-compact est un ouvert quasi-compact.
\end{prop}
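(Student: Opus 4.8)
Le plan est de me ramener au cadre des espaces analytiques de Berkovich, dont les d\'efinitions sont rappel\'ees dans \cite{Be}. \`A un espace rigide (quasi-s\'epar\'e) on associe un espace de Berkovich dont l'espace topologique sous-jacent est localement compact ; les ouverts admissibles y correspondent \`a des ouverts, et les ouverts admissibles quasi-compacts aux parties compactes (domaines analytiques compacts). Un morphisme fini \'etale d'espaces rigides induit un morphisme fini \'etale d'espaces de Berkovich, et il suffit donc de d\'emontrer les deux assertions pour l'application continue sous-jacente $|f| : |X| \to |Y|$, puis de retraduire le r\'esultat.

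Pour l'ouverture, j'utiliserais le fait qu'un morphisme fini \'etale est en particulier plat et non ramifi\'e, et qu'un morphisme \'etale d'espaces analytiques est une application ouverte (voir \cite{Be}). L'image par $|f|$ d'un ouvert est alors un ouvert, ce qui, de retour sur les espaces rigides, signifie que l'image d'un ouvert admissible est un ouvert admissible.

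Pour la quasi-compacit\'e, je partirais d'un ouvert admissible quasi-compact $\mathcal{U}$ de $X$, qui correspond \`a une partie compacte de $|X|$. L'application $|f|$ \'etant continue, l'image $|f|(\mathcal{U})$ est compacte. En combinant avec le point pr\'ec\'edent, qui assure que $f(\mathcal{U})$ est un ouvert admissible, la compacit\'e de l'espace sous-jacent entra\^ine que $f(\mathcal{U})$ est un ouvert admissible quasi-compact.

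La difficult\'e principale est l'ouverture des morphismes \'etales, qui repose sur la platitude : c'est l\`a que se trouve le contenu g\'eom\'etrique, la quasi-compacit\'e n'\'etant ensuite qu'une cons\'equence formelle du fait qu'une application continue envoie un compact sur un compact. On pourrait en principe \'eviter le recours \`a Berkovich en se ramenant, par des recouvrements finis par des affino\"ides, \`a l'image d'un affino\"ide par un morphisme fini ; mais cette voie est plus d\'elicate, car un ouvert admissible contenu dans un quasi-compact n'est pas n\'ecessairement quasi-compact, ce qui rend l'argument purement rigide moins direct que l'argument topologique fourni par les espaces de Berkovich.
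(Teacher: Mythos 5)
Votre r\'eduction aux espaces de Berkovich repose sur un dictionnaire inexact, et c'est l\`a que se trouve la lacune. Il est faux que les ouverts admissibles d'un espace rigide correspondent aux ouverts topologiques de l'espace de Berkovich associ\'e : seuls les ouverts admissibles dits \og larges \fg, c'est-\`a-dire surconvergents, le sont. Un sous-domaine affino\"{\i}de, par exemple le disque ferm\'e $\{|T|\leq 1\}$ dans la droite rigide, est un ouvert admissible mais correspond \`a une partie \emph{compacte}, en g\'en\'eral non ouverte, de l'espace de Berkovich. Or ce sont pr\'ecis\'ement de tels ouverts qui interviennent dans l'article : les $X_{[a,b]}$, d\'efinis par des in\'egalit\'es larges sur $|\delta_{H_g}|$, sont des ouverts admissibles quasi-compacts qui ne sont pas ouverts au sens topologique de Berkovich. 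L'ouverture topologique de $|f|$ ne permet donc pas de conclure que l'image d'un ouvert admissible quelconque $\mathcal{U}$ est un ouvert admissible : l'\'enonc\'e pertinent c\^ot\'e Berkovich serait que $f$ envoie un domaine analytique sur un domaine analytique, ce qui est pr\'ecis\'ement le contenu g\'eom\'etrique du th\'eor\`eme d'ouverture des morphismes plats (aplatissement de Raynaud, Bosch--L\"utkebohmert) et ne se d\'eduit pas d'un argument purement topologique. Le m\^eme d\'efaut contamine votre second point : une partie compacte de $|Y|$ n'est pas automatiquement un domaine analytique, de sorte que \og image continue d'un compact \fg{} ne suffit pas ; votre conclusion y repose enti\`erement sur le premier point, qui est le maillon d\'efaillant.

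L'article proc\`ede autrement et reste interne \`a la G-topologie : pour l'ouverture, il cite directement le th\'eor\`eme (\cite{Bo}) selon lequel un morphisme plat d'espaces rigides est ouvert au sens des ouverts admissibles -- c'est exactement l'\'enonc\'e dont vous avez besoin et que votre r\'eduction ne red\'emontre pas ; pour la quasi-compacit\'e, il utilise un argument formel (tout recouvrement admissible de $f(\mathcal{U})$ se tire en arri\`ere en un recouvrement admissible de $\mathcal{U}$, dont on extrait un sous-recouvrement fini que l'on pousse en avant). Si vous tenez \`a la voie berkovicienne, il faudrait soit invoquer le r\'esultat sur l'image d'un domaine analytique compact par un morphisme plat quasi-compact, soit exploiter la finitude et l'\'etalit\'e pour scinder localement $f$ en une r\'eunion disjointe finie de copies de la base, comme le fait la proposition \ref{finitefiber} de l'article ; les deux assertions deviennent alors \'el\'ementaires.
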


\begin{proof}
D'apr\`es \cite{Bo}, un morphisme plat d'espace analytiques rigides quasi-compacts est ouvert. Le morphisme $f$ \'etant \'etale, donc plat, il envoie un ouvert sur un ouvert. \\
Supposons que $\mathcal{U}$ est un ouvert quasi-compact de $X$, et soit $(\mathcal{V}_i)_{i \in I}$ un recouvrement admissible par des affino\"\i des de $f(\mathcal{U})$. Alors $(f^{-1} (\mathcal{V}_i) \cap \mathcal{U})_{i \in I}$ est un recouvrement admissible de $\mathcal{U}$ ; on peut donc en extraire un sous-recouvrement admissible fini. Il existe donc un ensemble fini $I_0$ tel que $(\mathcal{V}_i)_{i \in I_0}$ recouvre $f(\mathcal{U})$. Comme cet espace est quasi-s\'epar\'e, ce recouvrement est admissible, et cela prouve que l'image de $\mathcal{U}$ par $f$ est quasi-compacte.  
\end{proof}

D\'efinissons maintenant la notion de voisinage strict.

\begin{defi}
Soit $X$ un espace rigide quasi-compact sur $K$, et $\mathcal{U}$ un ouvert quasi-compact. Un voisinage strict de $\mathcal{U}$ est un ouvert quasi-compact $\mathcal{V}$ contenant $\mathcal{U}$ tel que le recouvrement $(\mathcal{V},X \backslash \mathcal{U})$ de $X$ soit admissible.
\end{defi}

\begin{prop}
Soit $X$ un espace rigide sur $K$, $\mathcal{U}_1, \mathcal{U}_2$ des ouverts quasi-compacts de $X$ et soit $\mathcal{V}_i$ un voisinage strict de $\mathcal{U}_i$ pour $i=1,2$. Alors $\mathcal{V}_1 \cap \mathcal{V}_2$ est un voisinage strict de $\mathcal{U}_1 \cap \mathcal{U}_2$, et $\mathcal{V}_1 \cup \mathcal{V}_2$ un voisinage strict de $\mathcal{U}_1 \cup \mathcal{U}_2$.
\end{prop}

\begin{proof}
Traitons par exemple le cas de l'union. Nous devons montrer que le recouvrement $B=(\mathcal{V}_1 \cup \mathcal{V}_2 , X \backslash (\mathcal{U}_1 \cup \mathcal{U}_2))$ de $X$ est admissible. Puisque le recouvrement $(\mathcal{V}_1,X \backslash \mathcal{U}_1)$ de $X$ est admissible, il suffit de v\'erifier que l'intersection du recouvrement $B$ avec $\mathcal{V}_1$ et $X \backslash \mathcal{U}_1$ est admissible. L'intersection de $B$ avec $\mathcal{V}_1$ donne le recouvrement $(\mathcal{V}_1 , \mathcal{V}_1 \backslash (\mathcal{V}_1 \cap ( \mathcal{U}_1 \cup \mathcal{U}_2  )))$ de $\mathcal{V}_1$, qui est admissible. L'intersection avec $X \backslash \mathcal{U}_1$ donne le recouvrement $B' =~((\mathcal{V}_1 \cup~\mathcal{V}_2) \backslash \mathcal{U}_1 , X \backslash (\mathcal{U}_1 \cup~\mathcal{U}_2 )) $ de $X \backslash \mathcal{U}_1$. Or le recouvrement \\
$B'' =~(\mathcal{V}_2 \backslash (\mathcal{V}_2 \cap \mathcal{U}_1) , (X \backslash \mathcal{U}_1) \cap (X \backslash \mathcal{U}_2))$ est un recouvrement admissible de $X \backslash \mathcal{U}_1$ car $\mathcal{V}_2$ est un voisinage strict de $\mathcal{U}_2$. Comme $B''$ est un recouvrement plus fin que $B'$, on en d\'eduit que $B'$ est un recouvrement admissible de $X \backslash \mathcal{U}_1$.
\end{proof}

Les voisinages stricts sont \'egalement pr\'eserv\'es par les morphismes finis \'etales.

\begin{prop} \label{vois}
Soit $f : X \to Y$ un morphisme fini \'etale d'espaces analytiques rigides quasi-compacts sur $K$. Soient $\mathcal{U}$ un ouvert quasi-compact de $X$ et $\mathcal{V}$ un voisinage strict de $\mathcal{U}$ dans $X$. Alors $f(\mathcal{V})$ est un voisinage strict de $f(\mathcal{U})$ dans $Y$.
\end{prop}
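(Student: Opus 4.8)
The plan is to transport the admissible covering that witnesses $\mathcal{V}$ as a strict neighborhood of $\mathcal{U}$ through $f$, using that a finite \'etale morphism is open, closed (being finite, hence proper) and sends admissible coverings to admissible coverings, exactly as exploited in the proof of Proposition~\ref{qqcpt}. First I would record the easy structural facts: by Proposition~\ref{qqcpt}, $f(\mathcal{U})$ is a quasi-compact open of $Y$ and $f(\mathcal{V})$ is an open of $Y$, and $\mathcal{U} \subseteq \mathcal{V}$ gives $f(\mathcal{U}) \subseteq f(\mathcal{V})$. Thus $(f(\mathcal{V}), Y \setminus f(\mathcal{U}))$ is set-theoretically a covering of $Y$, and the whole content is to prove that it is admissible.

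The first real step is to reduce to the quasi-compact situation. Since $X$ is quasi-compact and $(\mathcal{V}, X \setminus \mathcal{U})$ is admissible, it admits a finite affinoid refinement; collecting the affinoids of this refinement that are contained in $\mathcal{V}$ produces a quasi-compact open $\mathcal{W}$ with $\mathcal{U} \subseteq \mathcal{W} \subseteq \mathcal{V}$ which is again a strict neighborhood of $\mathcal{U}$ (the same finite refinement witnesses admissibility of $(\mathcal{W}, X \setminus \mathcal{U})$). As $\mathcal{W}$ is quasi-compact, $f(\mathcal{W})$ is a quasi-compact open of $Y$ by Proposition~\ref{qqcpt}, with $f(\mathcal{U}) \subseteq f(\mathcal{W}) \subseteq f(\mathcal{V})$. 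If one shows that $f(\mathcal{W})$ is a strict neighborhood of $f(\mathcal{U})$, then $(f(\mathcal{W}), Y \setminus f(\mathcal{U}))$ is an admissible covering refining $(f(\mathcal{V}), Y \setminus f(\mathcal{U}))$, whence the latter is admissible too. This reduces everything to the case where both $\mathcal{U}$ and its strict neighborhood are quasi-compact.

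To treat the quasi-compact case I would push the admissible covering $(\mathcal{W}, X \setminus \mathcal{U})$ forward: its image $(f(\mathcal{W}), f(X \setminus \mathcal{U}))$ is an admissible covering of $f(X)$, while $Y \setminus f(X)$ is clopen (as $f$ is both open and closed) and contributes a trivially admissible piece of any covering of $Y$. Here lies the main obstacle: because $f$ need not be injective, $f(X \setminus \mathcal{U})$ is strictly larger than $Y \setminus f(\mathcal{U})$, a point of $Y$ possibly having one antecedent in $\mathcal{U}$ and another outside. One observes that the overlap $f(X \setminus \mathcal{U}) \cap f(\mathcal{U})$ is contained in $f(\mathcal{U}) \subseteq f(\mathcal{W})$, so that shrinking the second member from $f(X \setminus \mathcal{U})$ down to $Y \setminus f(\mathcal{U})$ only deletes points already covered by $f(\mathcal{W})$.

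The delicate point is that this shrinking must be shown to preserve admissibility, and this is false for a general open: the covering of the closed unit disc by the open unit disc and its boundary circle is the standard non-admissible example, of exactly this shape. It is precisely the quasi-compactness of $f(\mathcal{U})$, which makes $Y \setminus f(\mathcal{U})$ a genuine admissible open admitting the right finite affinoid refinements, that rescues the argument and excludes such pathologies. Conceptually the cleanest way to see it is to pass to the associated Berkovich (or adic) space: there, $\mathcal{W}$ being a strict neighborhood of the quasi-compact $\mathcal{U}$ means $\overline{\mathcal{U}} \subseteq \mathcal{W}$ for the compact topological closure, and $f$ finite is proper, so that $f(\overline{\mathcal{U}}) = \overline{f(\mathcal{U})}$ is closed and contained in $f(\mathcal{W})$; hence $f(\mathcal{W})$ contains the closure of $f(\mathcal{U})$ and is a strict neighborhood of it. I expect this properness and closure input, rather than any purely combinatorial manipulation of coverings, to be the real engine of the proof.
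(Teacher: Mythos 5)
The paper gives no proof of Proposition~\ref{vois} at all (it is stated between Propositions~\ref{qqcpt} and~\ref{finitefiber} and left to the reader), so there is nothing to compare your argument against except its own correctness. Your argument is sound and complete in outline. The reduction to a quasi-compact strict neighborhood $\mathcal{W}$ with $\mathcal{U}\subseteq\mathcal{W}\subseteq\mathcal{V}$ is exactly right: a finite affinoid refinement of $(\mathcal{V},X\setminus\mathcal{U})$ exists because $X$ is quasi-compact, the members contained in $\mathcal{V}$ must cover $\mathcal{U}$, and an admissible refinement of $(f(\mathcal{V}),Y\setminus f(\mathcal{U}))$ by $(f(\mathcal{W}),Y\setminus f(\mathcal{U}))$ suffices by the axioms of the $G$-topology. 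You are also right to identify, and then discard, the naive push-forward of the covering: since $f$ need not be injective, $f(X\setminus\mathcal{U})$ overshoots $Y\setminus f(\mathcal{U})$, and shrinking an open in a set-theoretic covering does not in general preserve admissibility. The engine of your proof is the translation into adic (or Berkovich) terms, where for quasi-compact opens $\mathcal{U}'\subseteq\mathcal{V}'$ of a quasi-compact quasi-separated space, strictness of the neighborhood is equivalent to $\overline{\mathcal{U}'}\subseteq\mathcal{V}'$ (closure under specialization in the adic space; in the Berkovich picture the correct formulation is rather that $\mathcal{V}'$ is a \emph{topological neighborhood} of the compact set $\mathcal{U}'$, so you should state one criterion or the other precisely rather than conflate them). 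Granting that standard equivalence, finiteness of $f$ gives that $f^{ad}$ is closed and specializing, whence $\overline{f(\mathcal{U})}=f(\overline{\mathcal{U}})\subseteq f(\mathcal{W})$, which is the conclusion. The only caveat is that this equivalence is itself essentially the whole content of the proposition and is imported from the adic/Berkovich dictionary rather than proved; since the paper cites Berthelot for these notions and supplies no proof of its own, that seems an acceptable level of rigor here.
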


\begin{proof}
La propri\'et\'e \`a d\'emontrer \'etant locale, on peut supposer $Y$ affino\"ide. Le fait que le recouvrement $(\mathcal{V}, X \backslash \mathcal{U})$ soit admissible implique qu'il existe un ouvert quasi-compact $\mathcal{U}'$ inclus dans le compl\'ementaire de $\mathcal{U}$ tel que $(\mathcal{V}, \mathcal{U}')$ est un recouvrement admissible de $X$. Choisissons des mod\`eles formels $\mathfrak{X}$, $\mathfrak{Y}$, et $\mathfrak{f} : \mathfrak{X} \to \mathfrak{Y}$, pour $X$, $Y$ et $f$ respectivement. D'apr\`es \cite{BL2} lemme $5.7$, quitte \`a effectuer un \'eclatement admissible de $\mathfrak{X}$, il existe des sous-sch\'emas ouverts $\mathfrak{U}$, $\mathfrak{V}$ et $\mathfrak{U}'$ de $\mathfrak{X}$, dont la fibre g\'en\'erique vaut respectivement $\mathcal{U}$, $\mathcal{V}$ et $\mathcal{U}'$. D'apr\`es \cite{BL2} th\'eor\`eme $5.2$, quitte \`a effectuer un \'eclatement admissible de $\mathfrak{Y}$, et prendre pour $\mathfrak{X}$ le transform\'e strict, on peut supposer que le morphisme $\mathfrak{f}$ est plat. D'apr\`es \cite{BL2} corollaire $5.3$, quitte \`a effectuer un autre \'eclatement admissible de $\mathfrak{Y}$ et prendre pour $\mathfrak{X}$ le transform\'e strict, on peut supposer que $\mathfrak{f}$ est quasi-fini. De plus, $\mathfrak{f}$ est s\'epar\'e d'apr\`es \cite{BL1} proposition $4.7$. Le fait d'\^etre plat \'etant stable par changement de base, on s'est donc ramen\'e au cas o\`u le morphisme $\mathfrak{f}$ est quasi-fini, plat et s\'epar\'e. En raisonnant comme dans \cite{AM} lemme $A.1.1.$, on en d\'eduit que $\mathfrak{f}$ est fini et plat. Soit $X_0$ la fibre sp\'eciale de $\mathfrak{X}$, et de m\^eme pour $Y_0$, $\mathcal{U}_0$, $\mathcal{V}_0$ et $\mathcal{U}'_0$. Alors $\mathcal{U}$, $\mathcal{V}$ et $\mathcal{U}'$ sont \'egaux \`a l'image inverse de leur fibre sp\'eciale par le morphisme de sp\'ecialisation. De plus, $\mathcal{V}_0$ et $\mathcal{U}'_0$ sont deux ouverts recouvrant $X_0$, et on a 
$$\mathcal{U}_0 \subset X_0 \backslash \mathcal{U}'_0 \subset \mathcal{V}_0  $$
Le morphisme $f$ induit sur les fibres sp\'eciales \'etant fini et plat, les images de $\mathcal{U}_0$ et $\mathcal{V}_0$ par $f$ sont des ouverts de $Y_0$, et l'image de $X_0 \backslash \mathcal{U}'_0$ est un ferm\'e $\mathcal{W}_0$ de $Y_0$ (le morphisme $f$ est propre, donc ferm\'e). On a alors
$$f(\mathcal{U}_0) \subset \mathcal{W}_0 \subset f(\mathcal{V}_0)    $$
Les ouverts $f(\mathcal{U})$ et $f(\mathcal{V})$ sont \'egaux aux images inverses de $f(\mathcal{U}_0)$ et $f(\mathcal{V}_0)$ par le morphisme de sp\'ecialisation. Or si $\mathcal{W}'_0$ d\'esigne le compl\'ementaire de $\mathcal{W}_0$, et $\mathcal{W}'$ l'image inverse de $\mathcal{W}'_0$ par le morphisme de sp\'ecialisation, alors $\mathcal{W}'$ est un ouvert quasi-compact contenant le compl\'ementaire de $f(\mathcal{V})$ et contenu dans le compl\'ementaire de $f(\mathcal{U})$. Cela implique que les recouvrements $(f(\mathcal{V}),\mathcal{W}')$ et $(f(\mathcal{V}),Y \backslash f(\mathcal{U}))$ sont admissibles, donc que $f(\mathcal{V})$ est un voisinage strict de $f(\mathcal{U})$.
\end{proof}

\'Enon\c cons maintenant un crit\`ere de finitude, qui sera tr\`es important pour d\'ecomposer les op\'erateurs de Hecke.

\begin{prop} \label{finitefiber}
Soient $X,Y$ deux espaces rigides s\'epar\'es, et $f : X \to Y$ un morphisme fini \'etale. Soit \'egalement $\mathcal{U}$ un ouvert de $X$ telle que l'immersion ouverte $\mathcal{U} \to X$ soit quasi-compacte. Supposons que le cardinal des fibres g\'eom\'etriques de $f_{|\mathcal{U}} : \mathcal{U} \to Y$ soit constant. Alors les morphismes $f_{|\mathcal{U}} : \mathcal{U} \to Y$ et $f_{|X\backslash \mathcal{U}} : X \backslash \mathcal{U} \to Y$ sont finis et \'etales.
\end{prop}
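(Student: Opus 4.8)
The plan is to show that the hypotheses force $\mathcal{U}$ to be simultaneously open and closed in $X$; everything else then follows formally. Indeed, if $\mathcal{U}$ is open and closed, its complement $X \backslash \mathcal{U}$ is also an admissible open, and both inclusions $\mathcal{U} \hookrightarrow X$ and $X \backslash \mathcal{U} \hookrightarrow X$ are at once open and closed immersions. Since a closed immersion is finite and an open immersion is \'etale, composing with the finite \'etale morphism $f$ shows at once that $f_{|\mathcal{U}}$ and $f_{|X \backslash \mathcal{U}}$ are finite and \'etale. Note that the \'etale part is automatic (the restriction of an \'etale morphism to an open subset is \'etale); the whole content therefore lies in finiteness, that is, in the closedness of $\mathcal{U}$. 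The quasi-compactness of $\mathcal{U} \to X$ will be used to ensure that the resulting morphisms are genuinely finite, and that $X \backslash \mathcal{U}$ is a quasi-compact admissible open.

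To prove that $\mathcal{U}$ is closed, I would argue by contradiction, choosing a point $x_0 \in \overline{\mathcal{U}} \backslash \mathcal{U}$ and setting $y_0 = f(x_0)$. I would then invoke the local structure of finite \'etale morphisms: after replacing $Y$ by a small enough admissible open neighbourhood $V$ of $y_0$, one may split $f^{-1}(V) = \bigsqcup_i W_i$ into finite \'etale pieces $W_i \to V$ in such a way that the rigid points of the fibre $f^{-1}(y_0)$ are separated into distinct components $W_i$. Write $W_0$ for the component containing $x_0$, so that $x_0$ is the unique rigid point of $f^{-1}(y_0)$ lying in $W_0$; in particular every geometric point of $W_0$ above $y_0$ lies above this single rigid point $x_0$.

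The core of the argument is to count, in each component, the number of geometric points falling into $\mathcal{U}$. For $y \in V$, let $n_i(y)$ denote the cardinality of the geometric fibre of $\mathcal{U} \cap W_i \to V$ at $y$, so that $\sum_i n_i(y) = k$ is the (constant) cardinality of the fibres of $f_{|\mathcal{U}}$. Since $x_0 \notin \mathcal{U}$ and every geometric point of $W_0$ above $y_0$ lies above $x_0$, one has $n_0(y_0) = 0$. Each function $n_i$ is lower semicontinuous, as a geometric point lying in the open set $\mathcal{U}$ persists in nearby fibres (the sheets of an \'etale cover do not disappear). Combined with the constancy of the sum $\sum_i n_i = k$ and the finiteness of the index set, this forces each $n_i$ to be locally constant near $y_0$; in particular $n_0 \equiv 0$ on some neighbourhood $V'$ of $y_0$. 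Then $\mathcal{U} \cap W_0 \cap f^{-1}(V')$ is an admissible open with no geometric point over any point of $V'$, hence empty, which provides a neighbourhood of $x_0$ disjoint from $\mathcal{U}$ and contradicts $x_0 \in \overline{\mathcal{U}}$. Thus $\mathcal{U}$ is closed, and the proposition follows.

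I expect the main obstacle to be twofold. First, one needs the local splitting theorem for finite \'etale morphisms over rigid (or Berkovich) spaces, i.e.\ the lifting of the orthogonal idempotents of the fibre algebra $\prod_j k(x_j)$ to an admissible neighbourhood of $y_0$; this rests on the henselianity of the local ring $\mathcal{O}_{Y,y_0}$ together with quasi-compactness to extract finite covers. Second, one must justify carefully the lower semicontinuity of the $n_i$, which expresses the persistence of the points of an open set in neighbouring fibres of an \'etale cover; this is the delicate point in the rigid-analytic setting, but it follows from the fact that an \'etale morphism is open, already used in the proof of Proposition \ref{qqcpt}.
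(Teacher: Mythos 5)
Your counting argument---splitting $f$ locally into components $W_i$, counting the geometric points of $\mathcal{U}\cap W_i$ in each fibre via the functions $n_i$, and playing the lower semicontinuity of each $n_i$ against the constancy of $\sum_i n_i$ to deduce local constancy---is sound, and it is in substance what the paper does (there the splitting comes from a remark of Liu, a base change makes the fibre points rational so that each component is a copy of $Y$ and each $n_i\in\{0,1\}$, and the level sets of the tuple $(n_i)$ are the sets $V_S$). The gap lies in how you convert this into the conclusion. You reduce everything to the assertion that $\mathcal{U}$ is closed in $X$ and argue by contradiction from a point $x_0\in\overline{\mathcal{U}}\setminus\mathcal{U}$. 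In the $G$-topology of a rigid space the closure of an admissible open is not a meaningful notion, and what your argument actually establishes is the pointwise statement that every rigid point of $X\setminus\mathcal{U}$ admits a quasi-compact admissible neighbourhood disjoint from $\mathcal{U}$. This is strictly weaker than what finiteness of $\mathcal{U}\to Y$ requires, namely that $(\mathcal{U},X\setminus\mathcal{U})$ be an admissible covering: for $\mathcal{U}=\{|x|\le 1/2\}$ inside the closed unit disc, every point of the complement has such a neighbourhood (an annulus $\{r\le |x|\le 1\}$), yet the covering is not admissible and $\mathcal{U}\to X$ is not finite. Passing to Berkovich spaces does not rescue the framing either: there a quasi-compact analytic domain is automatically closed, so closedness cannot be the content of the proposition; what is needed is that $\mathcal{U}$ be, locally over $Y$, a union of connected components.

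The missing step, which is exactly where the paper puts the weight of the proof, is a global admissibility and connectedness argument. Reduce to $Y$ a connected affinoid. Your sets $\{n_i\ge m\}$ are quasi-compact admissible opens (images of quasi-compact opens under the finite \'etale $f$, cf.\ la proposition \ref{qqcpt}), hence so is each level set $\{(n_i)=(m_i)\}$, which, using the relation $\sum_i n_i=k$, is a finite union of finite intersections of such sets. These level sets form a \emph{finite} partition of the affinoid $Y$ by quasi-compact admissible opens; such a finite covering is admissible, so the connectedness of $Y$ forces all but one piece to be empty, i.e.\ the tuple $(n_i)$ is genuinely constant on $Y$. To conclude that each $\mathcal{U}\cap W_i$ is then empty or equal to $W_i$ one still needs $n_i\in\{0,\deg(W_i/Y)\}$, which is why the paper first performs a base change making every component of degree one over $Y$, so that $n_i\in\{0,1\}$ and the level-set partition directly exhibits $\mathcal{U}$ as a disjoint union of copies of $Y$. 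Without this globalisation your proof does not close; with it, your argument becomes the paper's.
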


\begin{proof}
La propri\'et\'e \'etant locale sur $Y$, on peut supposer $Y$ affino\"ide. Alors \\
$X=~f^{-1} (Y)$ est \'egalement affino\"ide, $\mathcal{U}$ est quasi-compact. Soit $\mathfrak{X}$, $\mathfrak{Y}$ et $\mathfrak{f} : \mathfrak{X} \to \mathfrak{Y}$ des mod\`eles formels pour $X$, $Y$ et $f$ respectivement. En raisonnant comme dans la proposition pr\'ec\'edente, quitte \`a effectuer un \'eclatement admissible de $\mathfrak{X}$, il existe un sous-sch\'ema ouvert $\mathfrak{U}$ de $\mathfrak{X}$ \'egal \`a $\mathcal{U}$ en fibre g\'en\'erique. De m\^eme, quitte \`a effectuer un \'eclatement admissible de $\mathfrak{Y}$, on peut supposer que le morphisme $\mathfrak{f} : \mathfrak{X} \to \mathfrak{Y}$ est  s\'epar\'e, quasi-fini et plat. Le morphisme $\mathfrak{U} \to \mathfrak{Y}$ est \'egalement s\'epar\'e, quasi-fini et plat. Le lemme A.1.1. de \cite{AM} montre alors que les morphismes $\mathfrak{X} \to \mathfrak{Y}$ et $\mathfrak{U} \to \mathfrak{Y}$ sont finis. \\
Nous allons maintenant montrer que le morphisme $\mathfrak{X} \backslash \mathfrak{U} \to \mathfrak{Y}$ est propre. V\'erifions le crit\`ere valuatif de propret\'e. Soit $R$ un anneau de valuation de corps des fractions $L$. Soit $y_0 :$~Spec~$R \to~\mathfrak{Y}$ un morphisme fix\'e. Notons $y : $ Spec $L \to \mathfrak{Y}$ le morphisme induit par $y_0$. Comme le morphisme $\mathfrak{X} \to \mathfrak{Y}$ est fini, il existe un nombre fini de points g\'eom\'etriques au-dessus de $y$. Soit $L'$ une extension finie telle que les points g\'eom\'etriques au-dessus de $y$ soient des morphismes $x_1, \dots, x_d : $ Spec $L' \to \mathfrak{X}$ faisant commuter le diagramme

\begin{displaymath}
\xymatrix{\text{Spec } L' \ar[r]^-{x_i} \ar[d] &  \mathfrak{X} \ar[d] \\
\text{Spec } R' \ar[r]^-{y_0} & \mathfrak{Y}
}
\end{displaymath}

\noindent o\`u $R'$ est la cl\^oture int\'egrale de $R$ dans $L'$. Chacun de ces morphismes se rel\`eve de mani\`ere unique en $\widetilde{x}_i : $ Spec $R' \to \mathfrak{X}$. Supposons que les morphismes $x_1, \dots, x_r$ soient \`a valeurs dans $\mathfrak{U}$ et $x_{r+1}, \dots, x_{d}$ \`a valeurs dans $\mathfrak{X} \backslash \mathfrak{U}$. Alors par propret\'e du morphisme $\mathfrak{U} \to \mathfrak{Y}$, les morphismes $\widetilde{x_1}, \dots, \widetilde{x_r}$ sont \`a valeurs dans $\mathfrak{U}$. Comme le cardinal des fibres de $\mathfrak{U} \to \mathfrak{Y}$ est constant, les morphismes $\widetilde{x_{r+1}}, \dots, \widetilde{x_d}$ sont \`a valeurs dans $\mathfrak{X} \backslash \mathfrak{U}$. Nous avons donc prouv\'e que tout morphisme $x_0 :$ Spec $L \to \mathfrak{X} \backslash \mathfrak{U}$ faisant commuter le diagramme

\begin{displaymath}
\xymatrix{\text{Spec } L \ar[r]^-{x_0} \ar[d] &  \mathfrak{X} \backslash \mathfrak{U} \ar[d] \\
\text{Spec } R \ar[r]^-{y_0} & \mathfrak{Y}
}
\end{displaymath}

\noindent s'\'etend en un unique morphisme $\widetilde{x_0} : $ Spec $ R \to \mathfrak{X} \backslash \mathfrak{U}$. Cela prouve que le morphisme $\mathfrak{X} \backslash \mathfrak{U} \to \mathfrak{Y}$ induit par $\mathfrak{f}$ est propre. On en d\'eduit qu'il est fini d'apr\`es \cite{EGA_3} Proposition 4.4.2. \\
Cela implique donc que les morphismes $\mathcal{U} \to Y$ et $X \backslash \mathcal{U} \to Y$ au niveau des espaces rigides sont finis. Il sont \'egalement \'etales, car les morphismes d'inclusion $\mathcal{U} \to X$ et $X \backslash \mathcal{U} \to X$ le sont.
\end{proof}

\begin{rema} 
Cette proposition est l'analogue d'un r\'esultat de g\'eom\'etrie alg\'ebrique (voir \cite{EGA_IV_4} cor. $18.2.9$). Le lemme $A.1.1$ de \cite{AM} permet de montrer directement que la restriction de $f$ \`a $\mathcal{U}$ est finie (mais pas la restriction de $f$ \`a $X \backslash \mathcal{U}$, car il faut d'abord prouver que l'immersion $X \backslash \mathcal{U} \to X$ est quasi-compacte).
\end{rema}

\subsection{Correspondances cohomologiques}

Dans cette partie, nous allons introduire la définition de correspondance cohomologique sur un espace rigide. Cela permet de définir à la fois une correspondance géométrique, et un opérateur agissant sur les sections de certains faisceaux. Nous utiliserons cette définition pour définir des opérateurs de Hecke.

\begin{defi}
Soit $X,Y,Z$ des espaces rigides. Une correspondance géométrique entre $X$ et $Y$ en dessous de $Z$ est la donnée de deux morphismes étales $p : Z \to X$ et $q : Z \to Y$. A toute partie $S$ de $X$ on associe la partie $C(S):=q(p^{-1}(S))$ de $Y$.
\end{defi}

Cette correspondance envoie les ouverts sur les ouverts. Si $q$ est de plus finie, elle envoie les ouverts quasi-compacts sur les ouverts quasi-compacts.

\begin{defi}
Soient deux faisceaux cohérents $\mathcal{F}$ et $\mathcal{G}$ définis respectivement sur $X$ et $Y$ et $c : q^* \mathcal{G} \to p^* \mathcal{F}$ un morphisme de faisceaux cohérents. Si $p$ est de plus fini, le morphisme $c$ permet de définir un morphisme $H^0(C(\mathcal{U}),\mathcal{G}) \to H^0(\mathcal{U},\mathcal{F})$, pour tout ouvert $\mathcal{U}$ de $X$. Il est défini par
\begin{displaymath}
H^0(C(\mathcal{U}),\mathcal{G}) \to H^0 ( p^{-1} (\mathcal{U}), q^* \mathcal{G}) \overset{c}{\to} H^0(p^{-1}(\mathcal{U}) , p^* \mathcal{F}) \overset{Tr_{p}}{\to}  H^0(\mathcal{U},\mathcal{F})
\end{displaymath}
Ce morphisme est la correspondance cohomologique.
\end{defi}

\subsection{Degré et degrés partiels} \label{partial}

Dans $\cite{Fa}$, Fargues définit une fonction degré pour les schémas en groupes finis et plats, qui nous sera très utile. Nous en rappelons ici la définition et les principales propriétés. \\
Soit $K$ un extension finie de $\mathbb{Q}_p$, $O_K$ son anneau des entiers et $\pi$ une uniformisante. Soit également $\overline{K}$ une clôture algébrique de $K$. Soit $v$ la valuation sur $\overline{K}$ normalisée par $v(p)=1$.

\begin{defi}
Soit $G$ un schéma en groupes fini et plat sur $O_K$. Nous définissons le degré de $G$ par
$$\text{deg }(G) = v(\text{Fitt}_0 \text{ } \omega_G)$$
où
\begin{itemize}
\item $\omega_G$ est le faisceau conormal de $G$.
\item Fitt$_0$ désigne l'idéal de Fitting.
\item La valuation d'un idéal principal $I=(a)$ est définie par $v(I):=v(a)$.
\end{itemize}
\end{defi}

\noindent Ainsi, si $\omega_G = \oplus_{i=1}^d O_K / \pi^{a_i} O_K$, alors deg$(G) = \sum_{i=1}^d a_i$.

\begin{ex}
Si $G$ est un schéma en groupes de Oort-Tate sur $O_K$ ($\cite{T-O}$) de paramètres $(a,b)$, de telle sorte que $G = $ Spec $O_K[X]/(X^p - aX)$, alors deg $G = v(a)$.
\end{ex}

De plus, cette fonction a les propriétés suivantes (\cite{Fa}) :

\begin{prop} \label{propdeg}
La fonction deg vérifie les propriétés suivantes :
\begin{itemize}
\item Si $0 \to G_1 \to G_2 \to G_3 \to 0$ est une suite exacte, alors deg $G_2 = $ deg $G_1 + $ deg $G_3$.
\item deg $G$ + deg $G^D = $ht $G$, où $G^D$ est le dual de Cartier de $G$ et ht $G$ la hauteur de $G$.
\item Si $f : G \to G'$ est un morphisme de schémas en groupes finis et plats sur $O_K$, qui induit un isomorphisme en fibre générique, alors deg $G \leq $ deg $G'$. De plus, il y a égalité si et seulement $f$ est un isomorphisme.
\end{itemize}
\end{prop}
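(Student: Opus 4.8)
The whole argument rests on the observation that, $O_K$ being a discrete valuation ring and the generic fibre $G_K$ being étale (we are in characteristic $0$), the conormal module $\omega_G$ is a finite-length torsion $O_K$-module; by the structure theorem for modules over a PID it is isomorphic to $\bigoplus_i O_K/\pi^{a_i}O_K$, and for such a module $v(\mathrm{Fitt}_0\,\omega_G)=\sum_i a_i=\operatorname{length}_{O_K}\omega_G$. So the first step is to record the identity $\deg G=\operatorname{length}_{O_K}\omega_G$ and to reduce all three assertions to statements about lengths of conormal modules.

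For the additivity, I would attach to $0\to G_1\to G_2\to G_3\to 0$ the conormal exact sequence $0\to\omega_{G_3}\to\omega_{G_2}\to\omega_{G_1}\to 0$. The right-exact part $\omega_{G_2}\to\omega_{G_1}\to 0$ is the conormal sequence of the closed immersion $G_1\hookrightarrow G_2$ pulled back along the unit section; the step requiring care is left exactness $\omega_{G_3}\hookrightarrow\omega_{G_2}$, which I would deduce from the flatness of the faithfully flat map $G_2\to G_3$ (étale-locally a $G_1$-torsor). Granting this, additivity of length yields $\deg G_2=\deg G_1+\deg G_3$.

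For the duality formula $\deg G+\deg G^D=\mathrm{ht}\,G$, I would note first that both sides are additive in short exact sequences: $\mathrm{ht}$ obviously is, and $\deg(\cdot)^D$ is because Cartier duality is exact and reverses $0\to G_1\to G_2\to G_3\to0$ into $0\to G_3^D\to G_2^D\to G_1^D\to0$, to which the first item applies. The content is then a single identity $\mathrm{Fitt}_0\,\omega_G\cdot\mathrm{Fitt}_0\,\omega_{G^D}=(p^{\mathrm{ht}\,G})$, which I would derive from the perfect Cartier pairing $G\times G^D\to\mathbb{G}_m$, or equivalently from the coLie complex of $G$, relating $\omega_{G^D}$ to the cotangent data of $G$. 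The basic cases $\mu_{p^n}$ (where $\omega=O_K/p^n$) and the étale or constant groups (where $\omega=0$) already exhibit the expected shape of the identity and fix the normalisations. I expect this duality input to be the main obstacle, since it is the only step drawing on genuine structure of finite flat group schemes rather than on the formal calculus of lengths and Fitting ideals.

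For the last item, a morphism $f:G\to G'$ which is an isomorphism on generic fibres forces $G$ and $G'$ to have the same order, hence the same height. Pulling back differentials gives $f^*:\omega_{G'}\to\omega_G$, and the inequality $\deg G\le\deg G'$ amounts to $\mathrm{Fitt}_0\,\omega_{G'}\subseteq\mathrm{Fitt}_0\,\omega_G$, which I would obtain by establishing that $f^*$ is surjective (equivalently $e^*\Omega_{G/G'}=0$). For the equality case, equality of lengths makes $f^*$ an isomorphism of conormal modules; combining this with the same statement for the dual morphism $f^D:(G')^D\to G^D$, which is again a generic isomorphism, together with the duality formula, I would conclude that $f$ is an isomorphism. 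The subtle point here is that $f$ need not be flat, as the example $\mathbb{Z}/p\to\mu_p$ shows, so the argument cannot invoke faithfully flat descent and must control the conormal map directly.
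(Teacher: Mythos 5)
First, a point of comparison: the paper does not prove this proposition at all --- it is quoted from Fargues \cite{Fa} --- so there is no internal argument to measure yours against, and your sketch has to stand on its own. Its overall shape (reduce everything to lengths of conormal modules, conormal exact sequence, Cartier duality, pullback on differentials) is the standard one, but two of the three items have real gaps at exactly the places you flag as delicate. For additivity, left-exactness of $0 \to \omega_{G_3} \to \omega_{G_2} \to \omega_{G_1} \to 0$ does \emph{not} follow from faithful flatness of $G_2 \to G_3$: the relative Frobenius of $\mathbb{G}_a$ in characteristic $p$ is faithfully flat and induces the zero map on differentials, so bare flatness cannot be the reason. What one actually uses is that finite flat group schemes are syntomic (relative complete intersections), so the transitivity triangle for the cotangent complex applies to the co-Lie complexes; alternatively one counts lengths via the discriminant. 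For the duality formula you defer the entire content, and the proposed sanity check cannot substitute for it: additivity plus the values on $\mu_{p^n}$ and etale groups does not determine $\deg$, because not every finite flat group scheme is an iterated extension of such (e.g. $E[p]$ over $\mathbb{Z}_p$ for $E$ supersingular has no multiplicative or etale subgroup).

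The third item is where the argument actually breaks. The map $f^* : \omega_{G'} \to \omega_G$ need not be surjective when $f$ is a generic isomorphism. Oort--Tate counterexample: over a sufficiently ramified $O_K$ take $G = G_{a,b}$ and $G' = G_{a',b'}$ of order $p$ with $0 < v(a) < v(a')$ and $(v(a')-v(a))/(p-1) \in v(K^*)$; the homomorphism $x' \mapsto cx$ with $c^{p-1}a = a'$ is an isomorphism on generic fibres, but on conormal modules it is multiplication by $c$ from $\omega_{G'} = O_K/(a')$ to $\omega_G = O_K/(a)$, with $v(c)>0$, hence not surjective, even though $\deg G < \deg G'$. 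So both the inequality and the equality case need a different mechanism; the equality case is doubly affected since it rests on $f^*$ being an isomorphism. A clean repair: $O_{G'} \to O_G$ is injective (both rings are $p$-torsion free and $f$ is a generic isomorphism), the complete-intersection property gives $v(\mathrm{disc}(O_G/O_K)) = |G|\deg G$, and the comparison of discriminants of two orders with the same total fraction ring yields
$$\deg G' - \deg G = \frac{2}{|G|}\, v\bigl([O_G : O_{G'}]\bigr) \geq 0,$$
with equality if and only if $O_{G'} = O_G$, i.e. if and only if $f$ is an isomorphism; this also dispenses with the detour through the dual morphism.
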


\begin{rema} 
$ $
\begin{itemize}
\item La deuxième propriété implique en particulier que deg $G \leq $ ht $G$. 
\item Si $G$ est de hauteur $h$, alors deg $G = 0$ si et seulement si $G$ est étale, c'est-à-dire isomorphe sur $O_{\overline{K}}$ à $(\mathbb{Z}/p\mathbb{Z})^h$. De même, deg $G=h$ si et seulement si $G$ est multiplicatif, c'est-à-dire isomorphe sur $O_{\overline{K}}$ à $\mu_p^h$.
\end{itemize}
\end{rema}

Définissons maintenant les degrés partiels pour les schémas en groupes finis et plats munis d'une action de l'anneau des entiers d'une extension finie non ramifiée de $\mathbb{Q}_p$. Nous appliquerons en particulier ces résultats pour les sous-groupes finis d'un groupe $p$-divisible. \\
Soit $F$ une extension finie non ramifiée de $\mathbb{Q}_p$ de degré $f$, et $O_F$ son anneau des entiers. On a donc $O_F = W(\mathbb{F}_{p^f})$ et $F=O_F[1/p]$. Soit $S$ l'ensemble des plongements de $F$ dans $\overline{\mathbb{Q}_p}$ ; on sait que $S$ est un groupe cyclique d'ordre $f$ engendré par le Frobenius. \\
Soit $K$ une extension finie de $\mathbb{Q}_p$ contenant $F$, et soit $H$ un schéma en groupes fini et plat d'ordre une puissance de $p$ sur $O_K$ muni d'une action de $O_F$ de hauteur $fh$. Soit $\omega_H$ le module des différentielles ; c'est un $O_K$-module de type fini muni d'une action de $O_F$. Alors, on a 
$$\omega_H = \bigoplus_{s \in S} \omega_{H,s}$$
où $\omega_{H,s}$ est le sous-module de $\omega_H$ où $O_{F}$ agit par $s$. 

\begin{defi}
Le degré partiel de $H$ relatif au plongement $s$ de $F$ est défini par 
$$\text{deg}_s H := v( \text{Fitt}_0 \text{ } \omega_{H,s})$$
\end{defi}

\begin{ex}
Si $G$ est un schéma en groupes de Raynaud sur $O_K$ ($\cite{Ray}$) de paramètres $(a_i,b_i)$, de telle sorte que $G = $ Spec $O_K[X_1, \dots, X_f]/(X_i^p - a_iX_{i+1})$, alors $\deg_i G =~v(a_{i-1})$.
\end{ex}

On voit immédiatement que le degré de $H$ est égal à la somme des deg$_s H$ pour $s \in S$. Nous allons maintenant démontrer des propriétés analogues à la fonction degré pour les degrés partiels.

\begin{prop} 
Les fonctions deg$_s$ sont additives. Plus précisément, soient $H_1$, $H_2$ et $H_3$ trois groupes finis et plats d'ordre une puissance de $p$ munis d'une action de $O_F$ avec une suite exacte
$$ 0 \to H_1 \to H_2 \to H_3 \to 0$$
Alors pour tout $s \in S$
$$\text{deg}_s H_2 = \text{deg}_s H_1 + \text{deg}_s H_3 $$
\end{prop}

\begin{proof}
On a une suite exacte de $O_K \otimes_{\mathbb{Z}_p} O_F$-modules
$$0 \to \omega_{H_3} \to \omega_{H_2} \to \omega_{H_1} \to 0$$
En décomposant cette suite exacte suivant les éléments de $S$, on en déduit des suites exactes
$$0 \to \omega_{H_3,s} \to \omega_{H_2,s} \to \omega_{H_1,s} \to 0$$
pour tout $s \in S$. Le résultat en découle.
\end{proof}

\begin{prop} \label{dual}
Soit $H$ un schéma en groupes fini et plat de d'ordre une puissance de $p$ sur $O_K$ muni d'une action de $O_F$ de hauteur $fh$. Soit $H^D$ le dual de Cartier de $H$ ; c'est encore un schéma en groupes fini et plat sur $O_K$ muni d'une action de $O_F$. Alors pour tout $s \in S$, 
$$\text{deg}_s H^D = h - \text{deg}_s H$$
En particulier, on voit que deg$_s H \in [0 , h]$. 
\end{prop}

\begin{proof}
On se ramène au cas où $H$ est de $p$-torsion. Soit $(\mathfrak{M},\phi)$ le module de Breuil-Kisin de $H$ (voir $\cite{Ki}$) ; $\mathfrak{M}$ est un $k[[u]]$-module libre de rang $fh$ et $\phi$ est un endomorphisme semi-linéaire tel que $u^e \mathfrak{M}$ soit inclus dans le module engendré par l'image de $\phi$, où $k$ est le corps résiduel de $O_K$ et $e$ son indice de ramification. Le module $\mathfrak{M}$ est muni d'une action de $O_F$, donc se décompose suivant les éléments de $S$ : $\mathfrak{M} = \oplus_{s \in S} \mathfrak{M}_s$. On choisit une bijection entre $S$ et $\mathbb{Z} / f \mathbb{Z}$ de telle sorte que $\phi$ envoie $\mathfrak{M}_i$ dans $\mathfrak{M}_{i+1}$. Les $\mathfrak{M}_i$ sont donc des $k[[u]]$-modules libres de rang $h$. On note $\phi_i : \mathfrak{M}_{i-1} \to \mathfrak{M}_i$. Fixons une base pour les modules $(\mathfrak{M}_i)$, et soit $A_i$ la matrice de $\phi_i$ dans cette base. On a alors
$$ \text{deg}_i H = \frac{1}{e} v_u(\det A_i)$$
où $v_u$ dénote la valuation $u$-adique. De plus, le module de Breuil-Kisin de $H^D$ est $(\mathfrak{M}^*,\phi^*)$, où $\mathfrak{M}^*$ est le dual de $M$, et où $\phi^*$ peut être décrit comme suit. Le module $\mathfrak{M}^*$ se décompose en $\mathfrak{M}^* = \oplus_{i=0}^{f-1} \mathfrak{M}_i^*$. On muni chaque module $\mathfrak{M}_i^*$ de la base duale de celle des $\mathfrak{M}_i$. Alors la matrice de $\phi_i^* : \mathfrak{M}_{i-1}^* \to \mathfrak{M}_i^*$ dans cette base est $B_i = {u^e} ({}^t\! A_i) ^{-1}$. D'où
$$ \text{deg}_i H^D = \frac{1}{e} v_u(\det B_i) = \frac{1}{e} (eh - v_u(\det A_i)) = h - \text{deg}_i H $$
Une autre démonstration possible aurait été de filtrer le groupe $H$ par des groupes de Raynaud (voir $\cite{Ray}$), et d'utiliser l'additivité des fonctions degrés (la propriété est évidente pour les groupes de Raynaud car on a une description explicite de ces groupes et de leurs duaux).
\end{proof}

Nous avons également une propriété d'augmentation des degrés partiels par déformation. On fixe une identification entre $S$ et le groupe $\mathbb{Z}/f\mathbb{Z}$, le Frobenius arithmétique étant identifié avec $1$. 

\begin{prop}
Soient $G$ et $G'$ deux schémas en groupes finis et plats sur $O_K$ d'ordre une puissance de $p$ munis d'une action de $O_F$. On suppose qu'il existe un morphisme $O_F$-linéaire $f : G \to G'$, qui est un isomorphisme en fibre générique. Alors pour tout $j \in \mathbb{Z}/f\mathbb{Z}$, on a 
$$\sum_{i=0}^{f-1} p^i \deg_{j-i} G \leq \sum_{i=0}^{f-1} p^i \deg_{j-i} G'$$
De plus, $f$ est un isomorphisme si et seulement si $\deg_j G = \deg_j G'$ pour tout $j \in \mathbb{Z}/f\mathbb{Z}$.
\end{prop}

\begin{proof}
On se ramène au cas où $G$ et $G'$ sont de $p$-torsion. Soient $(\mathfrak{M},\phi)$ et $(\mathfrak{M}',\phi')$ les modules de Breuil-Kisin associés respectivement à $G$ et $G'$. On a donc un morphisme $f : \mathfrak{M}' \to \mathfrak{M}$. Chacun de ses modules se décompose sous l'action de $O_F$ : on a donc $\mathfrak{M} = \oplus_{i \in \mathbb{Z}/f\mathbb{Z}} \mathfrak{M}_i$, tels que le Frobenius $\phi$ envoie $\mathfrak{M}_i$ dans $\mathfrak{M}_{i+1}$, et de même pour $\mathfrak{M}'$. Fixons des bases pour les $k[[u]]$-modules libres $\mathfrak{M}_i$, et soit $A_i$ la matrice de $\phi_i : \mathfrak{M}_{i-1} \to \mathfrak{M}_i$. On a donc comme précédemment 
$$ \text{deg}_i G = \frac{1}{e} v_u(\det A_i)$$
On fait de même pour les modules $\mathfrak{M}_i'$. Le morphisme $f$ induit des morphismes $f_i :~\mathfrak{M}_i' \to~\mathfrak{M}_i$, tels que $f_i \circ \phi_i' = \phi_i \circ f_{i-1}$. Si on note $F_i$ la matrice de $f_i$ dans les bases introduites, on a donc
$$F_i A_i' = A_i \sigma(F_{i-1})$$
En prenant le déterminant, on obtient
$$ \det F_i \det A_i' = (\det F_{i-1})^p \det A_i$$
On en déduit que pour tout $j \in \mathbb{Z}/f\mathbb{Z}$, on a
$$\det F_j \prod_{i=0}^{f-1} (\det A_{j-i}')^{p^{i}} = (\det F_j)^{p^f} \prod_{i=0}^{f-1} (\det A_{j-i})^{p^{i}} $$ 
En passant à la valuation $u$-adique, et en utilisant le fait que la valuation $u$-adique de $\det F_j$ est positive, on obtient les relations
$$\sum_{i=0}^{f-1} p^i \deg_{j-i} G \leq \sum_{i=0}^{f-1} p^i \deg_{j-i} G'$$
Supposons maintenant que $\deg_j G = \deg_j G'$ pour tout $j \in \mathbb{Z}/f\mathbb{Z}$. Alors les inégalités précédentes sont des égalités, ce qui prouve que tous les $f_i$ sont des isomorphismes, soit que $G$ et $G'$ sont isomorphes. \\
Ici encore, une autre démonstration possible aurait été de filtrer les groupes $G$ et $G'$ par des groupes de Raynaud. En effet, il est possible de filtrer le groupe $G$ par des sous-groupes $(H_i)$ tels que $H_{i+1} / H_i$ soit un sous-groupe de Raynaud. Il existe également une filtration $(H_i')$ de $G'$ vérifiant la même propriété telle que le morphisme $f$ induise des morphismes $f_i : H_i / H_{i-1} \to H_i' / H_{i-1}'$ vérifiant les hypothèses de la proposition. En utilisant l'additivité des fonctions degrés, il suffit de prouver la proposition pour les groupes de Raynaud, ce qui est fait dans $\cite{P-S 1}$.
\end{proof}

\section{Variétés de Hilbert-Siegel} \label{premHecke}

Dans les deux parties suivantes, nous allons étudier le cas des variétés de Hilbert-Siegel. Commençons par quelques définitions.

\subsection{L'espace de modules}

Soit $F$ une extension totalement réelle de $\mathbb{Q}$ de degré $d$, $O_F$ son anneau des entiers et $p$ un nombre premier inerte dans $F$. On note $F_p$ la complétion $p$-adique de $F$, et $k \simeq \mathbb{F}_q$ le corps résiduel, avec $q=p^d$. On a donc $F_p = $Frac $W(k)$, où $W$ signifie les vecteurs de Witt. \\
Soit $g$ un entier strictement positif, et posons $K=F_p$ pour simplifier les notations. Soit $N \geq 3$ un entier premier à $p$. Notons également $\delta$ la différente de $F$. \\

\begin{defi}
Soit $X$ l'espace de modules sur $O_K$ dont les $S$-points sont les classes d'isomorphismes des quintuplets $(A,\lambda,\iota,\eta, H_i)$ avec
\begin{itemize}
\item $A \to S$ est un schéma abélien de dimension $dg$.
\item $\lambda : A \to A^t$ est une polarisation principale $O_F$-linéaire.
\item $\iota : O_F \hookrightarrow $ End$(A)$ est un morphisme compatible avec l'involution de Rosati.
\item $\eta : \delta \otimes_{\mathbb{Z}} \mu_N \hookrightarrow A[N]$ est une structure principale de niveau $N$ compatible à $O_F$.
\item $H_1 \subset \dots \subset H_g \subset A[p]$ est un drapeau complet de la $p$-torsion, où $H_i$ est un sous-groupe fini et plat totalement isotrope stable par $O_F$ de rang $p^{di}$.
\end{itemize}
\end{defi}

L'espace de modules $X$ est représentable par un schéma quasi-projectif sur $O_K$, que l'on notera toujours $X$. On notera également $X_K = X \times K$, $\mathfrak{X}$ la complétion formelle de $X$ le long de sa fibre spéciale, et $X_{rig}$ la fibre générique rigide de $\mathfrak{X}$. 

\begin{rema}
Le schéma $X$ est défini sur $\mathbb{Z}_p$, mais ce ne sera pas le cas des faisceaux que nous définirons dans les sections ultérieures.
\end{rema} 

\begin{defi}
On définit l'application deg : $X_{rig} \to [0,dg]$ par deg $(A,\lambda,\iota,\eta, H_i):=~\deg H_g$.
\end{defi}

Si $I$ est un intervalle inclus dans $[0,dg]$, on note $X_I = $ deg$^{-1} (I)$. On note également $X_{\geq u} = $ deg$^{-1} ([u, +\infty[)$ et $X_{> u}~=~$~deg$^{-1} (]u, +\infty[)$ . 

\begin{prop} \label{degree}
Si $I$ est un intervalle inclus dans $[0,dg]$, alors $X_I$ est un ouvert de $X_{rig}$. Si $I$ est compact à bornes rationnelles, alors $X_I$ est quasi-compact. 
\end{prop}

\begin{proof}
Sur $X$ l'isogénie universelle $A \to A/H_g$ induit un morphisme $\omega_{A/H_g} \to~\omega_A$, où $\omega_A$ et $\omega_{A/H_g}$ sont les faisceaux conornaux associés respectivement à $A$ et $A/H_g$. Ce sont des faisceaux localement libres de rang $dg$ sur $X$. Soient $\omega_{A/H_g}'$ et $\omega_A'$ les déterminants de ces faisceaux ; ce sont des faisceaux inversibles sur $X$ et on a donc un morphisme $\omega_{A/H_g}' \to \omega_A'$. Ce morphisme induit donc une section $\delta_{H_g} \in H^0(X, \mathcal{L}_{H_g})$, où $\mathcal{L}_{H_g}$ est le faisceau inversible égal à $\omega_A' \otimes \omega_{A/H_g}'^{-1}$. \\
On notera encore $\mathcal{L}_{H_g}$ le faisceau inversible sur $X_{rig}$ ; on dispose encore d'une section $\delta_{H_g} \in~H^0(X_{rig}, \mathcal{L}_{H_g})$. D'après le paragraphe $\ref{normdef}$, pour tout point $x$ de $X_{rig}$ on peut définir la norme de $\delta_{H_g} (x)$. De plus, en utilisant la définition de la fonction degré, on voit que
$$| \delta_{H_g} (x) | = p^{- \deg H_g(x)}$$
La fonction deg s'exprime donc comme valuation d'une fonction analytique sur $X_{rig}$. La proposition en découle, puisque si $I=[a,b]$ par exemple, alors
$$X_I = \{x \in X_{rig} , p^{-b} \leq |\delta_{H_g} (x) | \leq p^{-a} \} $$
et on voit donc que $X_I$ est un ouvert quasi-compact. Si $I$ n'est plus un intervalle compact, il faut remplacer certaines inégalités larges par des inégalités strictes, ce qui montre que $X_I$ est un ouvert de $X_{rig}$. 
\end{proof}

\subsection{Correspondance de Hecke}

Soit $C_K$ l'espace de modules sur $K$, dont les $S$-points sont les couples $(A,\lambda,\iota,\eta, H_i,L)$ où $(A,\lambda,\iota,\eta, H_i) \in X_K(S)$, et $L$ est un supplémentaire de $H_g$ dans $A[p]$, totalement isotrope et stable par l'action de $O_F$. \\
On dispose de deux morphismes de $C_K$ vers $X_K$ : $p_1 : (A,\lambda,\iota,\eta, H_i,L) \to (A,\lambda,\iota,\eta, H_i)$ et $p_2 :~(A,\lambda,\iota,\eta, H_i,L) \to (A/L , \iota',\lambda',\eta', $Im $(H_i \to (A/L)[p]))$, où $\iota' : O_F \to$ End$(A/L)$ est induit par $\iota$, $\lambda': A/L \to (A/L)^t$ est induit par $p \cdot\lambda$, et $\eta'$ est induit par $\eta$. Soit $C^{an}$ l'analytifié de $C_K$, et $X^{an}$ l'analytifié de $X_K$. On note encore $p_1, p_2$ les morphismes $C^{an} \to X^{an}$. On note $C_{rig} = p_1^{-1} (X_{rig})$ ; c'est le lieu de bonne réduction, et on  a également $C_{rig} = p_2^{-1} (X_{rig})$. 

\begin{defi} L'opérateur de Hecke ensembliste sur $X_{rig}$ est défini sur chaque partie $S$ de $X_{rig}$ comme $U_p(S) = p_2 p_1^{-1} (S)$. 
\end{defi}

\begin{rema} 
$ $
\begin{itemize}
\item Celui-ci envoie les parties finies dans les parties finies, les ouverts Zariski dans les ouverts Zariski, et les ouverts admissibles quasi-compacts dans les ouverts admissibles quasi-compacts. 
\item Si $x=(A,\lambda,\iota,\eta, H_i) \in X_{rig}$, l'ensemble des points de $U_p(x)$ est donc en bijection avec les sous-groupes $L$ de $A[p]$, totalement isotropes et stables par l'action de $O_F$, tels que $L$ et $H_g$ soient des supplémentaires dans $A[p]$ sur $K$. 
\end{itemize}
\end{rema}

Dans la suite, nous appellerons simplement \og supplémentaire générique \fg \text{ }de $H_g$ un tel $L$. L'opérateur de Hecke augmente le degré. Plus précisément, on a :

\begin{prop}
Soient $K_1$ une extension finie de $K$, $x=(A,\lambda,\iota,\eta, H_i) \in X_{rig}(K_1)$ et $y \in~U_p(x)$. Alors deg $(x) \leq $ deg $(y)$ ; de plus si deg $(x) = $ deg $(y)$, alors
\begin{itemize}
\item $H_g$ est un Barsotti-Tate tronqué d'échelon $1$.
\item il existe une extension finie $K_2$ de $K_1$, et un sous-groupe $H' \subset A_{O_{K_2}} [p]$, de hauteur $dg$ et stable par $O_F$, tels que $H_g$ et $H'$ soient en somme directe dans $A_{O_{K_2}} [p]$.
\item deg $(H') = dg - $ deg $(H_g)$.
\item deg $(H')$ et deg $(H)$ sont des entiers.
\end{itemize}
\end{prop}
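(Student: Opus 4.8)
The plan is to read off everything from Fargues' degree function and the three properties recalled in Proposition \ref{propdeg}. Write $\pi : A \to B = A/L$ for the isogeny attached to a generic supplement $L$, so that $y = (B,\ldots,H_g')$ with $H_g' = \pi(H_g)$, the scheme-theoretic image of $H_g$ in $B[p]$. The map $\phi : H_g \to H_g'$ induced by $\pi$ is a homomorphism of finite and flat group schemes which is an isomorphism on generic fibres, since $H_g$ and $L$ are supplementary in $A[p]$ over $K$ (so $H_g \cap L = 0$ generically). By the third item of Proposition \ref{propdeg} this already gives $\deg(x) = \deg H_g \le \deg H_g' = \deg(y)$, with equality if and only if $\phi$ is an isomorphism of group schemes over $O_{K_2}$, where $K_2$ is the finite extension of $K_1$ over which $L$ (equivalently the corresponding point of the Hecke correspondence) is defined. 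Note that $\deg$ is insensitive to the base change $O_{K_1} \to O_{K_2}$, since the valuation is normalised by $v(p) = 1$, so we may freely work over $O_{K_2}$.

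For the equality case I would first identify $H_g'$ with the honest quotient $A[p]/L$. The scheme-theoretic image $\pi(H_g)$ is flat over $O_{K_2}$ (its structure sheaf embeds into the flat sheaf $\pi_* \mathcal{O}_{H_g}$, hence is torsion-free), it is a subgroup scheme of $A[p]/L = \pi(A[p])$, and on the generic fibre it coincides with $(A[p]/L)_K$ because $H_g + L = A[p]$ there; two flat closed subgroup schemes of $B[p]$ with the same generic fibre agree, so $H_g' = A[p]/L$. Since $\omega_{A[p]} \simeq \omega_A \otimes O_{K_2}/p$ is free of rank $dg$, one has $\deg A[p] = dg$, whence $\deg(y) = \deg(A[p]/L) = dg - \deg L$ by additivity. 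The equality $\deg(x) = \deg(y)$ therefore reads $\deg H_g + \deg L = dg = \deg A[p]$, and applying the third item of Proposition \ref{propdeg} to the sum map $H_g \times L \to A[p]$ (an isomorphism on generic fibres) shows this sum map is an isomorphism, i.e. $A[p] = H_g \oplus L$ as finite and flat group schemes over $O_{K_2}$.

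It then remains to harvest the four assertions from this integral splitting, taking $H' = L$. The subgroup $H' = L$ is $O_F$-stable, totally isotropic, of height $dg$, and is by construction a direct complement of $H_g$ in $A[p]$; additivity of the degree along $0 \to H_g \to A[p] \to H' \to 0$ gives $\deg H' = dg - \deg H_g$. For the Barsotti-Tate property I would use that $A[p]$ is a truncated Barsotti-Tate group of level $1$ and that Frobenius and Verschiebung respect the decomposition $A[p] = H_g \oplus H'$: intersecting the relations $\ker F = \operatorname{im} V$ and $\ker V = \operatorname{im} F$ for $A[p]$ with the factor $H_g^{(p)}$ yields the same relations for $H_g$, so $H_g$ is itself $\mathrm{BT}_1$. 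Finally, the integrality of the degrees follows from the splitting: $\omega_{H_g}$ is a direct summand of the free $O_{K_2}/p$-module $\omega_{A[p]}$, hence is free, so $\deg H_g$ equals the rank of $\omega_{H_g}$ and is an integer; then $\deg H' = dg - \deg H_g$ is an integer as well.

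The main obstacle I anticipate is the equality analysis, and precisely the identification $H_g' = A[p]/L$: one must argue carefully that the scheme-theoretic image of $H_g$ is flat and coincides with the flat quotient $A[p]/L$, as it is this identification (together with $\deg A[p] = dg$) that converts the numerical equality of degrees into the integral direct-sum decomposition $A[p] = H_g \oplus L$. Once that decomposition is available, the Barsotti-Tate and integrality statements are formal consequences of the behaviour of $F$, $V$ and of $\omega$ under direct sums.
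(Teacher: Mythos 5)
Your proof is correct and follows essentially the same route as the paper: both hinge on applying the third item of Proposition \ref{propdeg} to the map $H_g \times L \to A[p]$ together with additivity of the degree and the identification $\mathrm{Im}(H_g \to (A/L)[p]) = A[p]/L$, giving the inequality and, in the equality case, the integral splitting $A[p]=H_g\oplus L$. You merely make explicit several steps the paper leaves implicit (flatness of the schematic image, $\deg A[p]=dg$, and the deduction of the $\mathrm{BT}_1$ and integrality claims from the splitting), all of which are handled correctly.
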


\begin{proof}
Soit $H'$ le supplémentaire générique de $H_g$ associé à $y$. Il est défini sur une extension finie $K_2$ de $K_1$. L'application $H_g \times H' \to A[p]$ est un isomorphisme en fibre générique ; d'après la propriété $\ref{propdeg}$, on a donc 
deg $(H_g) + $ deg $(H') \leq dg$. Or $\deg(x) = \deg (H_g)$, et deg $(y) = $ deg $(A[p] / H')~=~dg - $deg~$(H')$. D'où deg $(x) \leq $ deg $(y)$. \\
Si l'inégalité précédente est une égalité, toujours d'après la propriété $\ref{propdeg}$, l'application $H_g \times~H' \to~A[p]$ est isomorphisme, ce qui donne les propriétés énoncées.
\end{proof}

\begin{rema}
On a en particulier $U_p (X_{\geq u}) \subset X_{\geq u}$ pour tout $u \in [0,dg]$.
\end{rema}

Comprendre la dynamique de l'opérateur de Hecke $U_p$ revient à distinguer deux cas, suivant que le degré est entier ou non. Le cas où le degré n'est pas un entier est bien compris.

\begin{prop} \label{dyna}
Soit $r$ un entier compris entre $0$ et $dg -1$. Soit $0 < \lambda < \mu < 1$ des réels. Alors, il existe un entier $N$ tel que
$$U_p^N (X_{\geq r + \lambda}) \subset X_{\geq r + \mu}$$
\end{prop}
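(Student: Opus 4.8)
Le plan est d'exploiter le fait que, sur la zone o\`u le degr\'e n'est pas entier, l'op\'erateur $U_p$ fait \emph{strictement} cro\^itre le degr\'e (d'apr\`es la proposition pr\'ec\'edente, l'\'egalit\'e $\deg x = \deg y$ pour $y \in U_p(x)$ force $\deg x$ \`a \^etre entier), puis d'utiliser la quasi-compacit\'e pour transformer cette croissance stricte ponctuelle en une croissance uniforme. Je poserais $Z := X_{[r+\lambda,r+\mu]}$, qui est un ouvert quasi-compact de $X_{rig}$ puisque l'intervalle $[r+\lambda,r+\mu]$ est compact. L'observation cl\'e est que $[r+\lambda,r+\mu] \subset \, ]r,r+1[\,$ ne contient aucun entier, donc le degr\'e ne prend que des valeurs non enti\`eres sur $Z$, et $U_p$ y augmente donc strictement le degr\'e.

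Pour obtenir une minoration uniforme, je consid\'ererais l'ouvert $C^{(1)} := p_1^{-1}(Z)$, qui est quasi-compact car $p_1$ est fini, ainsi que la fonction $g := \deg \circ p_2 - \deg \circ p_1$ sur $C^{(1)}$. Cette fonction est continue (la fonction degr\'e l'\'etant d'apr\`es $\cite{Fa}$, et $p_1,p_2$ \'etant des morphismes). Pour $z \in C^{(1)}$, en posant $x=p_1(z)$ et $y=p_2(z) \in U_p(x)$, le fait que $\deg(x)$ ne soit pas entier donne $\deg(x) < \deg(y)$, soit $g(z) > 0$~: la fonction $g$ est strictement positive sur tout $C^{(1)}$. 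Les ouverts $\{g > 1/k\}$ ($k \geq 1$) formant un recouvrement admissible croissant de l'espace quasi-compact $C^{(1)}$, je pourrais en extraire un sous-recouvrement fini, ce qui fournit un r\'eel $\varepsilon > 0$ tel que $g \geq \varepsilon$ sur $C^{(1)}$~; autrement dit, pour tout $x \in Z$ et tout $y \in U_p(x)$, on a $\deg(y) \geq \deg(x) + \varepsilon$.

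Il resterait alors \`a it\'erer. Je poserais $N := \lceil (\mu-\lambda)/\varepsilon \rceil$ et prendrais $x \in X_{\geq r+\lambda}$ ainsi que $y \in U_p^N(x)$, reli\'es par une cha\^ine $x=y_0, y_1, \dots, y_N = y$ avec $y_{i+1} \in U_p(y_i)$. Comme $U_p$ fait cro\^itre le degr\'e et stabilise $X_{\geq r+\lambda}$, tous les $y_i$ sont dans $X_{\geq r+\lambda}$. Si tous v\'erifient $\deg(y_i) \leq r+\mu$, ils appartiennent \`a $Z$ et chaque \'etape augmente le degr\'e d'au moins $\varepsilon$, d'o\`u $\deg(y_N) \geq (r+\lambda) + N\varepsilon \geq r+\mu$. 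Sinon un certain $y_i$ v\'erifie $\deg(y_i) > r+\mu$, et l'inclusion $U_p(X_{\geq r+\mu}) \subset X_{\geq r+\mu}$ donne encore $\deg(y_N) \geq r+\mu$. Dans les deux cas $y \in X_{\geq r+\mu}$, ce qui \'etablit $U_p^N(X_{\geq r+\lambda}) \subset X_{\geq r+\mu}$.

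Le point d\'elicat sera l'\'etape de la minoration uniforme~: c'est l\`a que la quasi-compacit\'e de $C^{(1)}$ fait tout le travail, en convertissant la positivit\'e ponctuelle de $g$ (cons\'equence formelle de la non-int\'egralit\'e du degr\'e) en une borne inf\'erieure $\varepsilon$ valable partout. Il faudra pour cela s'assurer que les sur-niveaux $\{g > 1/k\}$ sont bien des ouverts admissibles et que leur r\'eunion constitue un recouvrement admissible, ce qui d\'ecoule de la continuit\'e de $g$ et d'une description de ces sur-niveaux par des in\'egalit\'es sur les normes des sections de Fitting, analogue \`a celle utilis\'ee pour montrer que les $X_I$ sont quasi-compacts.
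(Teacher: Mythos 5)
Votre preuve est correcte et repose sur la m\^eme id\'ee que celle du texte~: la quasi-compacit\'e de la bande $X_{[r+\lambda,r+\mu]}$, o\`u le degr\'e n'est jamais entier, convertit la croissance stricte ponctuelle du degr\'e sous $U_p$ en une minoration uniforme $\deg(y) \geq \deg(x) + \varepsilon$, que l'on it\`ere ensuite. La seule diff\'erence est de pr\'esentation~: le texte raisonne par l'absurde et invoque directement la continuit\'e de $x \mapsto \inf_{y \in U_p(x)} \deg(y) - \deg(x)$ sur la base, tandis que vous travaillez sur $p_1^{-1}(Z)$ dans l'espace de la correspondance, ce qui rend la continuit\'e et l'argument de recouvrement admissible un peu plus transparents.
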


\begin{proof}
En effet, supposons par l'absurde qu'il existe $x_n \in X_{\geq r + \lambda}$ et $y_n \in U_p^n(x_n)$ avec deg $(y_n) < r + \mu$. D'après la proposition précédente, cela entraîne $x_n \in X_{[r+\lambda,r+\mu]}$. Or cet espace est quasi-compact, et l'opérateur $U_p$ augmente strictement la fonction degré sur cet espace. Nous allons montrer que l'augmentation du degré peut être minorée par un élément strictement positif. Définissons 
$$C_{[r+\lambda,r+\mu]} = p_1^{-1} (X_{[r+\lambda,r+\mu]})$$
De plus, comme dans le preuve de la proposition $\ref{degree}$, il existe un faisceau inversible $\mathcal{L}_{H_g}$ sur $X_{rig}$, et une section $\delta_{H_g}$ de ce faisceau, tel que $|\delta_{H_g} (x)| = p^{-\deg(x)}$ pour tout point $x$ de $X_{rig}$. On note encore $\mathcal{L}_{H_g}$ le faisceau inversible sur $C_{rig}$ égal à $p_1^* \mathcal{L}_{H_g}$, et $\delta_{H_g}$ la section de ce faisceau égal à $p_1^* \delta_{H_g}$. La norme pour le faisceau $\mathcal{L}_{H_g}$ sur $X_{rig}$ donne par transport une norme pour le faisceau $p_1^* \mathcal{L}_{H_g}$ sur $C_{rig}$. De même, à l'aide de la projection $p_2$, on définit le faisceau $\mathcal{L}_{H_g'} = p_2^* \mathcal{L}_{H_g}$, la section $\delta_{H_g'} = p_2^* \delta_{H_g}$, ainsi que le sous-faisceau des fonctions de norme inférieure ou égale à $1$. Ainsi, si $K_1$ est une extension finie de $K$, et $x$ un $K_1$-point de $C_{rig}$, correspondant à un couple $(A\lambda,\iota,\eta, H_i)$ défini sur $O_{K_1}$, ainsi qu'un sous-groupe $L$ de $A[p]$, alors
\begin{displaymath}
\begin{array}{ccc}
| \delta_{H_g} (x) | = p^{- \deg H_g(x)} &
\text{ et } &
| \delta_{H_g'} (x) | = p^{- \deg H_g'(x)}
\end{array}
\end{displaymath}
où $H_g'$ est l'image de $H_g$ dans $A/L$. Nous avons vu que $|\delta_{H_g'} |< |\delta_{H_g}|$ sur $C_{[r+\lambda,r+\mu]}$ ; comme ce dernier espace est quasi-compact, on a $|\delta_{H_g'} | \leq \alpha |\delta_{H_g}|$ avec $\alpha <1$. Il existe donc $\varepsilon > 0$, avec
$$ \text{deg}(y) \geq \text{ deg}(x) + \varepsilon$$
pour tout $x \in X_{[r+\lambda,r+\mu]}$ et $y \in U_p(x)$. \\
Cela implique donc que deg$(y_n) \geq n\varepsilon + $ deg$(x_n) \geq n \varepsilon +r + \lambda$ pour tout $n$, ce qui est impossible.
\end{proof}

\subsection{Décomposition de $U_p$} \label{zoneint}

Sur $X_{]dg-1,dg]}$, l'opérateur de Hecke contracte donc les points vers le lieu ordinaire-multiplicatif. En dehors de cette zone, ce n'est pas le cas : il existe des points $x$ de degré inférieur à $dg-1$ tels que $U_p^n (x)$ ne soit pas inclus dans un voisinage strict du lieu ordinaire multiplicatif fixé pour tout entier $n$. Nous allons séparer les points de $U_p^n(x)$ suivant qu'ils sont ou non proches du lieu ordinaire multiplicatif. \\
Pour tout rationnel $\alpha < 1$, nous allons effectuer un découpage de l'ouvert $\mathcal{U}=\mathcal{U}_\alpha =~X_{[0, dg - 1 + \alpha]}$ qui permettra de décomposer l'opérateur de Hecke. 

\begin{defi} \label{bonnesuite}
Soit $\mathcal{V}$ un ouvert quasi-compact de $X_{rig}$. Une bonne suite d'ouverts de $\mathcal{V}$ est une suite d\'ecroissante $(\mathcal{V}_k)_{k\geq 0}$ d'ouverts quasi-compacts, avec $\mathcal{V}_0=\mathcal{V}$ et $\mathcal{V}_k = \emptyset$ si $k$ est plus grand qu'un certain entier $R$. La longueur de la suite est le plus petit entier $R$ v\'erifiant cette propri\'et\'e.
\end{defi}

	Nous allons donc découper l'ouvert $\mathcal{U}$ suivant le nombre de \og mauvais \fg $ $ supplémentaires génériques, c'est-à-dire ceux de degré plus grand que $1$. En effet, si on quotiente par un tel supplémentaire, le degré sera toujours inférieur à $dg - 1$, et on ne quitte donc pas l'ouvert $\mathcal{U}$. 
	
\begin{defi}
Soit $\beta$ un rationnel avec $0 < \beta < 1$. Pour tout $x=(A,\lambda,\iota,\eta, H_i) \in \mathcal{U}$, soit $N(x,\beta)$ le nombre de supplémentaires génériques $L$ de $H_g$, avec deg$(L) \geq 1 - \beta$. 
\end{defi}

Bien sûr la fonction $N$ ne peut prendre qu'un nombre fini de valeurs. Soit $N_{max}$ le nombre de supplémentaires génériques de $H_g$ dans $A[p]$ ; alors $N(x,\beta) \leq N_{max}$. 

\begin{rema} 
Si $x=(A,\lambda,\iota,\eta, H_i) \in \mathcal{U}$ et $L$ est un supplémentaire générique de $H_g$ avec deg~$L < 1-\beta$, alors
$$\text{deg }(A[p]/L) = dg - \text{deg } L > dg-1 + \beta$$
Si $y=(A/L,\iota',\lambda',\eta', H_i')$, alors deg $(y) > dg - 1+\beta$, soit $y \in X_{> dg - 1+\beta}$. Les fonctions $N(\bullet,\beta)$ vont nous servir à compter le nombre de mauvais supplémentaires génériques de $H_g$. 
\end{rema}

\begin{defi}
Soit $\mathcal{U}_i$ l'espace défini par
$$\mathcal{U}_i := \{ x \in \mathcal{U} , N(x,\beta) \geq i \}$$
\end{defi}

\begin{lemm} \label{bonnesuiteprop}
Les $(\mathcal{U}_i)_{i\geq 0}$ forment une bonne suite d'ouverts de $\mathcal{U}$.
\end{lemm}

\begin{proof}
Il est clair que $\mathcal{U}_0 = \mathcal{U}$, et que les $(\mathcal{U}_i)$ forment une suite décroissante, avec $\mathcal{U}_i = \emptyset$ si $i>N_{max}$. \\
Notons pour tout $i \geq 1$ l'espace de modules $C_i$ sur $K$, dont les $S$-points sont les couples $(A,\lambda,\iota,\eta, H_k, L_1, \dots, L_i)$, avec $(A,\lambda,\iota,\eta, H_k) \in X_K(S)$, et où les $L_k$ sont des supplémentaires génériques distincts de $H_g$. Soit $p_i : C_i \to X_K$ la projection d'oubli des $L_j$ ; on note $C_i^{an}$ l'analytifié de $C_i$ et $C_{i,rig} = p_i^{-1} (X_{rig})$. \\
Soit $C_{i,\beta}$ le sous-espace de $C_{i,rig}$ formé des $(A,\lambda,\iota,\eta,H_k,L_j)$ avec $(A,\lambda,\iota,\eta,H_k) \in \mathcal{U}$ et deg~$(L_j) \geq~1 -~\beta$ pour tout $1 \leq j \leq i$. C'est un ouvert quasi-compact de $C_{i,rig}$, et $\mathcal{U}_i$ est l'image par le morphisme fini et étale $p_i$ de $C_{i,\beta}$, donc $\mathcal{U}_i$ est un ouvert quasi-compact de $\mathcal{U}$, d'après la proposition $\ref{qqcpt}$.
\end{proof}

\begin{prop}
Soit $0 \leq i \leq N_{max}$. Sur $\mathcal{U}_i \backslash \mathcal{U}_{i+1}$, on a $N(x,\beta) = i$, et on peut décomposer la correspondance géométrique de Hecke sur cet ensemble en $U_{p} = U_{p,i}^{good} \coprod U_{p,i}^{bad}$, où $U_{p,i}^{bad}$ correspond aux $i$ supplémentaires $L$ de degré supérieur ou égal à $1 - \beta$.
\end{prop}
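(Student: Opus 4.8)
The plan is to read off the first assertion directly from the definitions and to obtain the decomposition by feeding the first projection of the correspondence into the finiteness criterion, Proposition \ref{finitefiber}. Write $\mathcal{W}:=\mathcal{U}_i\setminus\mathcal{U}_{i+1}$. The equality $N(x,\beta)=i$ on $\mathcal{W}$ is immediate: $x\in\mathcal{U}_i$ forces $N(x,\beta)\ge i$, while $x\notin\mathcal{U}_{i+1}$ forces $N(x,\beta)\le i$. I would emphasize that this is not a cosmetic remark but the whole point: it is precisely the statement that the number of bad generic supplements is \emph{constant} over $\mathcal{W}$, which is the hypothesis the finiteness criterion needs.

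Next I would realize the geometric Hecke correspondence by the finite étale maps $p_1,p_2:C_{rig}\to X_{rig}$ and restrict $p_1$ over $\mathcal{W}$, obtaining a finite étale map $p_1:C_{rig}\times_{X_{rig}}\mathcal{W}\to\mathcal{W}$ whose fibre over $x$ is the set of generic supplements $L$ of $H_g$. Inside it I single out the bad locus, the points $(A,\iota,\lambda,\eta,H_k,L)$ with $\deg(L)\ge dg-r-\beta$; this is (the restriction to $\mathcal{W}$ of) the quasi-compact open $C_{1,\beta}\subset C_{rig}$ already produced in the proof of the previous lemma, so its inclusion is a quasi-compact open immersion. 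By the first paragraph the geometric fibres of $p_1$ restricted to this bad locus all have cardinality $i$. Proposition \ref{finitefiber} then yields that both $p_1$ restricted to the bad locus and $p_1$ restricted to its complement (the good locus, of fibre cardinality $N_{max}-i$) are finite and étale over $\mathcal{W}$. Composing each with $p_2$ defines the two set-theoretic operators $U_{p,i}^{bad}$ and $U_{p,i}^{good}$, and since the bad locus and its complement partition the correspondence fibrewise this is exactly the disjoint decomposition $U_p=U_{p,i}^{good}\coprod U_{p,i}^{bad}$. I would also record, using $\deg(p_2(x,L))=dg-\deg(L)$, that a good supplement satisfies $\deg(p_2(x,L))>r+\beta$, so $U_{p,i}^{good}$ has image in $X_{>r+\beta}$; this is the feature that will make the decomposition useful later, even though it is not part of the present statement.

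The step I expect to be the main obstacle is verifying the constant-fibre-cardinality hypothesis in the correct relative setting, and this is really why one must pass to the individual stratum $\mathcal{W}$ rather than work on the open $\mathcal{U}_i$: over $\mathcal{U}_i$ one only has $N\ge i$, so the bad locus is not finite étale there, whereas over $\mathcal{W}$ the value is pinned to $i$. The delicate point is that the good locus is a priori merely the \emph{closed} complement of the open bad locus, so it is not obviously finite étale; the role of Proposition \ref{finitefiber} is precisely to upgrade constancy of fibre cardinality into a clopen splitting, thereby making the good piece finite étale as well. A secondary care is that $\mathcal{W}$ is only locally closed in $X_{rig}$ (a difference of two quasi-compact opens), so one applies the criterion to the finite étale correspondence pulled back to $\mathcal{W}$ and checks that the quasi-compactness of the open immersion of the bad locus, guaranteed by Proposition \ref{qqcpt} and the construction of the $\mathcal{U}_i$, survives this restriction.
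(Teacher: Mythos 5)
Votre démonstration est correcte et suit essentiellement la même démarche que celle du texte : on restreint $p_1$ au-dessus de la strate $\mathcal{U}_i\setminus\mathcal{U}_{i+1}$, on isole le lieu ouvert quasi-compact $\deg(L)\geq dg-r-\beta$ dont les fibres géométriques sont de cardinal constant égal à $i$ (ce qui découle de $N(x,\beta)=i$ sur la strate), et on applique la proposition \ref{finitefiber} pour obtenir que les deux morceaux sont finis et étales, d'où la décomposition $U_p=U_{p,i}^{good}\coprod U_{p,i}^{bad}$. Votre remarque sur le rôle exact du critère de finitude (transformer la constance du cardinal des fibres en un scindage ouvert-fermé) correspond précisément à l'usage qu'en fait le texte.
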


\begin{proof}
Justifions que cette correspondance est bien finie et étale. Rappelons que la correspondance de Hecke utilise l'espace rigide $C_{rig}$ paramétrant les couples  $(A,\lambda,\iota,\eta,H_k,L)$ avec $(A,\lambda,\iota,\eta,H_k)$ un point de $X_{rig}$ et $L$ supplémentaire générique de $H_g$. On dispose de deux morphismes finis et étales $p_1, p_2 : C_{rig} \to X_{rig}$, où $p_1$ est défini par l'oubli de $L$ et $p_2$ par le quotient par $L$. Notons $X_i = \mathcal{U}_i \backslash \mathcal{U}_{i+1}$, $C_i = p_1^{-1} (X_i)$ et $D_{i,\beta} \subset C_i$ le lieu (ouvert) défini par deg $(L) \geq 1 - \beta$. Alors l'immersion ouverte $D_{i,\beta} \to C_i$ est quasi-compacte, et les fibres géométriques de $p_1 : D_{i,\beta} \to X_i$ sont de cardinal constant (égal à $i$). D'après la proposition $\ref{finitefiber}$, les restriction de $p_1$ à $D_{i,\beta}$ et $C_i \backslash D_{i,\beta}$ sont finies et étales. \\
De plus, les morphismes $p_2 : D_{i,\beta} \to X_{rig}$ et $p_2 : C_i \backslash D_{i,\beta} \to X_{rig}$ sont étales car les immersions ouvertes le sont. On peut donc décomposer sur $X_i$ l'opérateur $U_p$ en $U_{p} =~U_{p,i}^{good} \coprod U_{p,i}^{bad}$, où $U_{p,i}^{bad}$ est obtenu à partir de la restriction de $p_1$ et $p_2$ à $D_{i,\beta}$, et où $U_{p,i}^{good}$ est obtenu en restreignant $p_1$ et $p_2$ à $C_i \backslash D_{i,\beta}$.
\end{proof}

\begin{rema}
$ $
\begin{itemize}
\item Pour $i=0$, on a sur $\mathcal{U} \backslash \mathcal{U}_1$, $U_{p,0}^{bad} = \emptyset$, et $U_p = U_{p,0}^{good}$. 
\item Pour $i=N_{max}$, on a sur $\mathcal{U}_{N_{max}}$, $U_{p,N_{max}}^{good} = \emptyset$ et $U_p = U_{p,N_{max}}^{bad}$, c'est-à-dire que tous les supplémentaires sont mauvais.
\item L'image de $U_{p,i}^{good}$ est incluse dans $X_{>dg-1+\beta}$, et l'image de $U_{p,i}^{bad}$ est incluse dans $X_{\leq dg - 1 + \beta}$ pour tout $i$.
\end{itemize}
\end{rema}

Nous aurons également besoin de faire surconverger les ouverts $\mathcal{U}_i$. Soit $\beta'$ un rationnel avec $\beta < \beta ' < 1$. 

\begin{defi}
On définit l'ouvert $\mathcal{U}_i'$ par
$$\mathcal{U}_i' := \{ x \in \mathcal{U} , N(x,\beta') \geq i \}$$
\end{defi}

\noindent La suite d'ouverts $(\mathcal{U}_i')$ vérifie alors les mêmes propriétés que celles de $(\mathcal{U}_i)$ ; en particulier on peut également décomposer l'opérateur de Hecke sur $\mathcal{U}_{i}' \backslash \mathcal{U}_{i+1}'$, qui coïncide avec la décomposition de $U_p$ sur $(\mathcal{U}_{i} \backslash \mathcal{U}_{i+1}) \cap (\mathcal{U}_{i}' \backslash \mathcal{U}_{i+1}') = \mathcal{U}_{i} \backslash (\mathcal{U}_i \cap \mathcal{U}_{i+1}')$.

\begin{prop} \label{bonnesuitevois}
Pour tout $k \geq 1$, $\mathcal{U}_k'$ est un voisinage strict de $\mathcal{U}_k$.
\end{prop}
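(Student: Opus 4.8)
The plan is to prove the corresponding fact one level up, on the space $C_{k,rig}$ of tuples $(A,\iota,\lambda,\eta,H_k,L_1,\dots,L_k)$ with distinct generic complements, and then to transport it down through the finite étale forgetful morphism $p_k$ by means of Proposition \ref{vois}. Write $\tilde{C}_k := p_k^{-1}(\mathcal{U})$, which is quasi-compact and finite étale over $\mathcal{U}$, and set
\[ C_{k,\beta} = \{ \deg L_j \geq dg - r - \beta \ \forall j \}, \qquad C_{k,\beta'} = \{ \deg L_j \geq dg - r - \beta' \ \forall j \}, \]
so that $\mathcal{U}_k = p_k(C_{k,\beta})$ and $\mathcal{U}_k' = p_k(C_{k,\beta'})$. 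Since $\beta < \beta'$, the bound $dg-r-\beta'$ is weaker than $dg-r-\beta$, whence $N(x,\beta') \geq N(x,\beta)$, so $\mathcal{U}_k \subset \mathcal{U}_k'$ and $C_{k,\beta} \subset C_{k,\beta'}$; both are quasi-compact opens, by the same argument (via Proposition \ref{qqcpt}) as the one establishing that the $\mathcal{U}_i$ form a stratification.

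The core of the argument is then to check that $C_{k,\beta'}$ is a strict neighborhood of $C_{k,\beta}$ inside $\tilde{C}_k$, that is, that the covering $(C_{k,\beta'},\, \tilde{C}_k \setminus C_{k,\beta})$ is admissible. I would encode each condition through the section $\delta_{L_j} = \mathrm{Fitt}_0\, \omega_{L_j}$, for which $|\delta_{L_j}(x)| = p^{-\deg L_j(x)}$, exactly as in the continuity statement for the degree function (cf. $\cite{Fa}$). Putting $g_j := |\delta_{L_j}|$, $t := p^{-(dg-r-\beta)}$ and $s := p^{-(dg-r-\beta')}$, one has $t < s$ because $\beta < \beta'$, and
\[ C_{k,\beta'} = \bigcap_{j} \{ g_j \leq s \}, \qquad \tilde{C}_k \setminus C_{k,\beta} = \bigcup_{j} \{ g_j > t \}. \]
That these two opens cover $\tilde{C}_k$ is immediate from $t < s$: a point lying outside $C_{k,\beta'}$ satisfies $g_{j}(x) > s > t$ for some $j$, hence lies in the complement of $C_{k,\beta}$.

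To obtain admissibility I would produce an admissible refinement. Picking intermediate radii $\rho_j$ with $t < \rho_j < s$, the finite covering of $\tilde{C}_k$ by the $2^k$ admissible opens $\bigcap_{j \in I} \{ g_j \leq \rho_j \} \cap \bigcap_{j \notin I} \{ g_j \geq \rho_j \}$ (for $I \subset \{1,\dots,k\}$) is admissible, being of the same annular type as the opens $X_I$ described earlier. Each such piece refines our covering: if $I$ is everything then $g_j \leq \rho_j < s$ for all $j$, so the piece lies in $C_{k,\beta'}$; otherwise $g_{j_0} \geq \rho_{j_0} > t$ for some $j_0 \notin I$, so the piece lies in $\tilde{C}_k \setminus C_{k,\beta}$. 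Since a covering admitting an admissible refinement is itself admissible, $C_{k,\beta'}$ is a strict neighborhood of $C_{k,\beta}$. Applying Proposition \ref{vois} to the finite étale morphism $p_k : \tilde{C}_k \to \mathcal{U}$ then yields that $\mathcal{U}_k' = p_k(C_{k,\beta'})$ is a strict neighborhood of $\mathcal{U}_k = p_k(C_{k,\beta})$.

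The main obstacle is precisely the admissibility of the covering upstairs, where several degree functions $g_j$ intervene at once; this is why I refine to the finite annular covering rather than arguing with a single function. Once that point is secured, the descent along the finite étale map $p_k$ via Proposition \ref{vois} is formal.
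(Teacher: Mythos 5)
Votre démonstration est correcte et suit essentiellement la même stratégie que celle du texte : on établit que $C_{k,\beta'}$ est un voisinage strict de $C_{k,\beta}$ dans $\textrm{p}_k^{-1}(\mathcal{U})$ via les conditions $|\delta_{L_j}(x)| \leq p^{-dg+r+\beta}$, puis on redescend par le morphisme fini étale $\textrm{p}_k$ grâce à la proposition \ref{vois}. La seule différence est que vous explicitez l'admissibilité du recouvrement $(C_{k,\beta'}, \textrm{p}_k^{-1}(\mathcal{U}) \setminus C_{k,\beta})$ par un raffinement de type Laurent à rayons intermédiaires, point que le texte se contente d'affirmer ; c'est un complément utile mais qui ne change pas l'argument.
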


\begin{proof}
Nous reprenons les notations des deux démonstrations précédentes. Nous avons un morphisme fini étale $\textrm{p}_k : C_{k,rig} \to X_{rig}$. De plus, $\mathcal{U}_k = \textrm{p}_k (C_{k,\beta})$ et $\mathcal{U}_k'=\textrm{p}_k(C_{k,\beta'})$. Montrons que $C_{k,\beta'}$ est un voisinage strict de $C_{k,\beta}$ dans $\textrm{p}_k^{-1} (\mathcal{U})$. \\
Rappelons que $C_k$ paramètre les $(A,\lambda,\iota,\eta, H_i, L_1, \dots, L_k)$, où les $L_j$ sont des supplémentaires génériques de $H_g$ deux à deux disjoints. L'ouvert quasi-compact $C_{k,\beta}$ est défini par les conditions $(A,\lambda,\iota,\eta, H_k) \in \mathcal{U}$ et deg $L_j \geq 1 - \beta$ pour tout $1 \leq j \leq k$. Ces dernières conditions sont équivalentes à $|\delta_{L_j} (x) | \leq p^{-1+\beta}$, où $\delta_{L_j}$ est la section du fibré inversible associé à $L_j$. \\
On en déduit donc que le recouvrement $(C_{k,\beta'},\textrm{p}_k^{-1} (\mathcal{U}) \backslash C_{k,\beta})$ est admissible puisque $\beta' > \beta$, soit que $C_{k,\beta'}$ est un voisinage strict de $C_{k,\beta}$. \\
Le morphisme $\textrm{p}_k$ étant fini et étale, $\mathcal{U}_k'$ est un voisinage strict de $\mathcal{U}_k$ d'après la proposition $\ref{vois}$.
\end{proof}

\subsection{Décomposition de $U_p^N$} \label{Hecke}

Dans la section précédente, nous avons découpé l'ouvert $\mathcal{U}$ suivant le nombre de mauvais supplémentaires, et avons ainsi décomposé la correspondance de Hecke en chaque point de $\mathcal{U}$ en $U_p = U_p^{good} + U_p^{bad}$, avec $U_p^{good}$ d'image incluse dans $X_{> dg - 1 + \beta}$, et $U_p^{bad}$ d'image incluse dans $X_{\leq dg - 1 + \beta}$. Il est possible d'itérer cette correspondance, et de décomposer la correspondance de Hecke d'ordre $p^N$, $U_p^N$. On rappelle que l'on s'est donné un rationnel $\alpha <1$ et que $\mathcal{U}=X_{[0, dg-1+ \alpha]}$.

\begin{theo} \label{bigtheo}
Soit $N \geq 1$ et $\beta$ un rationnel avec $0<\beta<1$. Il existe un ensemble fini totalement ordonné $S_N$ (qui sera défini par récurrence), indépendant de $\alpha$ et $\beta$, et une bonne suite d'ouverts $(\mathcal{U}_i (N))_{i \in S_N}$ de $\mathcal{U}=X_{[0, dg-1+ \alpha]}$ de longueur $L=L(N)$ indépendante de $\alpha$ et $\beta$, tels que pour tout $i\geq 0$, on peut décomposer la correspondance $U_p^N$ sur $\mathcal{U}_{i}(N) \backslash \mathcal{U}_{i+1}(N)$ en 
$$ U_{p}^N = \left ( \coprod_{k=0}^{N-1} U_p^{N-1-k} \circ T_k  \right ) \coprod T_N$$
avec $T_0 = U_{p,i,N}^{good}$, pour $0 < k < N$
$$T_k = \coprod_{i_1 \in S_{N-1}, \dots, i_k \in S_{N-k}}  U_{p,i_k,N}^{good} U_{p,i_{k-1},i_k,N}^{bad} \dots U_{p,i,i_1,N}^{bad}$$
et
$$T_N = \coprod_{i_1 \in S_{N-1}, \dots, i_{N-1} \in S_1} U_{p,i_{N-1},N}^{bad} U_{p,i_{N-2},i_{N-1},N}^{bad} \dots U_{p,i,i_1,N}^{bad}$$  
avec
\begin{itemize}
\item les images des opérateurs $U_{p,j,N}^{good}$ ($j \in S_k$) sont incluses dans $X_{> dg - 1 + \beta}$
\item les opérateurs $U_{p,i,j,N}^{bad}$ ($i \in S_k$, $j \in S_{k-1})$ et $U_{p,j,N}^{bad}$ ($j \in S_1$) sont obtenus en quotientant par un sous-groupe $L$ de degré supérieur ou égal à $1 - \beta$, et ont donc leurs images incluses dans $X_{\leq dg-1 + \beta}$.
\end{itemize}
Enfin, si $\beta'$ est un autre rationnel avec $\beta < \beta' <1$, et si $(\mathcal{U}_i'(N))$ est la bonne suite d'ouverts obtenue pour $\beta'$, alors $\mathcal{U}_i'(N)$ est un voisinage strict de $\mathcal{U}_i(N)$ pour tout $i$.
\end{theo}

\begin{proof}
Nous allons démontrer ce résultat par récurrence, ce qui permettra de construire explicitement les ensembles $S_k$ ainsi que les différents opérateurs intervenant dans la preuve. \\
Pour $N=1$, le résultat est démontré dans le paragraphe précédent. Remarquons que l'on a $S_1 = \{ 0, 1, \dots, N_{max} \}$. Supposons le résultat vrai pour $N \geq 1$ et démontrons le pour $N+1$.\\ 
Soient donc $\alpha < 1$, $\mathcal{U}=X_{[0,dg-1+\alpha]}$. Dans le paragraphe précédent, nous avons décomposé sur chaque cran de la bonne suite d'ouverts de $\mathcal{U}$ l'opérateur $U_p$ en $U_p^{good} \coprod U_p^{bad}$, où l'image de $U_p^{good}$ est incluse dans $X_{>dg - 1 + \beta}$, et l'opérateur $U_p^{bad}$ est obtenu en quotientant par des sous-groupes de degré supérieur ou égal à $1 - \beta$. L'image de $U_p^{bad}$ est donc incluse dans $X_{[0 , dg-1 + \beta]} =: \mathcal{V}$. \\
On peut appliquer le théorème au rang $N$ à $\mathcal{V}$ : il existe une suite décroissante d'ouverts quasi-compacts $(\mathcal{V}_i)_{i \in S_N}$ de $\mathcal{V}$, avec $\mathcal{V}_0=\mathcal{V}$, $\mathcal{V}_{L+1} = \emptyset$ si $L+1$ est le cardinal de $S_N$, et une décomposition de $U_{p,i}^N$ sur $\mathcal{V}_i \backslash \mathcal{V}_{i+1}$ pour $i \in S_N$. \\
Nous devons maintenant découper l'ouvert $\mathcal{U}$, suivant non seulement le nombre de mauvais supplémentaires, mais également suivant l'image dans $\mathcal{V}$ de ces mauvais supplémentaires. Ainsi, nous allons compter le nombre de supplémentaires, non plus dans $\mathcal{V}$ (c'est ce qui a été fait pour la décomposition de $U_p$), mais dans chaque cran $\mathcal{V}_i \backslash \mathcal{V}_{i+1}$. \\
Soit $S_{N+1}$ l'ensemble des suites décroissantes d'entiers positifs ou nuls $N_{max} \geq m_0 \geq~\dots \geq~m_{L}$, où $L+1$ est le cardinal de $S_N$. On posera $m_{-1} = N_{max}$. L'ensemble $S_{N+1}$ s'identifie donc à l'ensemble des fonctions sur $S_N$, à valeurs dans $\mathbb{N}$, décroissantes et majorées par $N_{max}$. \\
C'est bien un ensemble fini ; de plus, nous pouvons ordonner cet ensemble par l'ordre lexicographique : si $\underline{n} = (n_i)$ et $\underline{m}=(m_i)$ sont deux telles suites distinctes, et si $i$ est le premier indice tel que $n_i \neq m_i$, on dit que $\underline{n} < \underline{m}$ si $n_i < m_i$. \\
Si $\underline{m} = (m_i)$, le successeur de $\underline{m}$ est alors défini de la façon suivante : soit $i\geq -1$ le plus grand entier tel que $m_i > m_{i+1}$ (qui existe bien si $\underline{m}$ n'est pas la suite constante égale à $N_{max}$), alors 
$$\underline{m}+1 = (m_0, \dots, m_i, m_{i+1} +1, 0, \dots, 0)$$
Définissons maintenant la bonne suite d'ouverts de $\mathcal{U}$. Pour tout $x=(A,\lambda,\iota,\eta,H_k) \in \mathcal{U}$ et $i \in S_N$, soit $N_i (x)$ le nombre de supplémentaires génériques $L$ de $H_g$ avec \\
$y_L=(A/L,\iota',\lambda',\eta', $Im$ (H_k \to (A/L)[p])) \in \mathcal{V}_i$. Remarquons que le fait que $y_L$ appartienne à $\mathcal{V}_i$ implique que deg $(y_L) \leq dg-1+\beta$, soit deg $(L) \geq 1 - \beta$. En particulier, les fonctions $N_i$ sont bornées par $N_{max}$. De plus, $N_i(x)$ est également le cardinal de $U_p(x) \cap \mathcal{V}_i$. \\
Soit
$$\mathcal{U}_{\underline{m}} := \bigcap_{j\in S_N} \left( \bigcup_{i<j} \{ x \in \mathcal{U}, N_i(x) \geq m_i +1 \} \cup \{ x \in \mathcal{U}, N_j(x) \geq m_j \} \right)$$ 
Si $\underline{m}=(N_{max}, \dots, N_{max})$, définissons $\mathcal{U}_{\underline{m}+1}$ comme étant l'ensemble vide. \\
Avant de poursuivre la démonstration du théorème, démontrons quelques propriétés de ces ensembles.

\begin{lemm}
Pour tout $\underline{m} \in S_{N+1}$, $\mathcal{U}_{\underline{m}}$ est un ouvert quasi-compact.
\end{lemm}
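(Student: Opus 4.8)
The plan is to reduce the statement to a single counting fact and then to assemble $\mathcal{U}_{\underline m}$ by finitely many Boolean operations. Concretely, the only non-formal ingredient is that for each $i \in S_N$ and each integer $k \geq 1$ the elementary set
$$A_{i,k} := \{ x \in \mathcal{U}, \ N_i(x) \geq k \}$$
is a quasi-compact open of $\mathcal{U}$. Granting this, one is done: the set $\mathcal{U}_{\underline m}$ is, by its very definition, obtained from the $A_{i,k}$ (taking $k = m_i+1$ or $k = m_j$) by a finite union over the indices $i < j$ followed by a finite intersection over $j \in S_N$, the index set $S_N$ being finite. Since $X_{rig}$ is separated, a finite union and a finite intersection of quasi-compact opens is again a quasi-compact open (intersections of affinoid subdomains staying affinoid), so $\mathcal{U}_{\underline m}$ is a quasi-compact open.

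To prove the counting fact I would reuse the space $C_{k,rig}$ already introduced, parametrising tuples $(A,\iota,\lambda,\eta,H_\bullet,L_1,\dots,L_k)$ with $k$ pairwise distinct generic complements of $H_g$, together with its finite etale forgetful morphism $p_k : C_{k,rig} \to X_{rig}$. For each $1 \leq j \leq k$ let $q_j : C_{k,rig} \to X_{rig}$ be the morphism sending such a tuple to the isomorphism class of $A/L_j$ with its induced data, i.e. to the point $y_{L_j}$. This $q_j$ factors as the forgetful map $C_{k,rig} \to C_{rig}$ (remembering only $L_j$) followed by the quotient morphism $p_2 : C_{rig} \to X_{rig}$, both of which are finite and etale; hence $q_j$ is itself finite and etale.

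Now set
$$C_{k,i} := p_k^{-1}(\mathcal{U}) \cap \bigcap_{j=1}^{k} q_j^{-1}(\mathcal{V}_i).$$
Since $\mathcal{U}$ and $\mathcal{V}_i$ are quasi-compact opens and $p_k, q_1, \dots, q_k$ are finite etale, each of the preimages $p_k^{-1}(\mathcal{U})$ and $q_j^{-1}(\mathcal{V}_i)$ is a quasi-compact open; their finite intersection $C_{k,i}$ is therefore a quasi-compact open of $C_{k,rig}$. By the defining property of $N_i$, a point $x \in \mathcal{U}$ satisfies $N_i(x) \geq k$ exactly when $H_g$ admits at least $k$ distinct generic complements $L$ with $y_L \in \mathcal{V}_i$, that is, exactly when $x$ lies in the image $p_k(C_{k,i})$. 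Proposition \ref{qqcpt} then shows that this image, being the image of a quasi-compact open under the finite etale map $p_k$, is a quasi-compact open; this is exactly the required statement about $A_{i,k}$.

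The main obstacle is precisely this middle step: setting up $C_{k,rig}$ together with the $k$ quotient morphisms $q_j$ and checking they are finite etale, so that the fibrewise count $N_i(x)$ can be expressed as membership in a finite etale image. Once the identity $A_{i,k} = p_k(C_{k,i})$ is established and Proposition \ref{qqcpt} is applied, the remaining assembly of $\mathcal{U}_{\underline m}$ out of finitely many quasi-compact opens is purely formal.
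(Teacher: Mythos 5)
Votre démonstration est correcte et suit essentiellement la même démarche que celle de l'article : réduction au fait que $\{x \in \mathcal{U},\, N_i(x) \geq k\}$ est un ouvert quasi-compact, puis réalisation de cet ensemble comme image, par le morphisme fini étale d'oubli $p_k$, de l'ouvert quasi-compact de $C_{k,rig}$ découpé par les conditions $y_{L_j} \in \mathcal{V}_i$. La seule différence est cosmétique : l'article regroupe vos $k$ morphismes $q_j$ en un seul morphisme $q : C_k \to X^k$ et écrit cet ouvert comme $q^{-1}(\mathcal{V}_i^k)$.
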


\begin{proof}
Les intersections et unions intervenant dans la définition de $\mathcal{U}_{\underline{m}}$ étant finies, il suffit de montrer que $\mathcal{U}_{k,l} := \{ x \in \mathcal{U}, N_l(x) \geq k \}$ est un ouvert quasi-compact de $\mathcal{U}$.  \\
Soit $C_k$ l'espace des modules des $(A,\lambda,\iota,\eta,H_j,L_1, \dots, L_k)$ avec $(A,\lambda,\iota,\eta,H_j) \in \mathcal{U}$ et $L_1, \dots, L_k$ des supplémentaires génériques distincts de $H_g$. On dispose du morphisme d'oubli des $L_j$, $p :~C_k \to~\mathcal{U}$, ainsi que d'un morphisme $q : C_k \to X^k$, la $j$-ième composante étant donnée par le quotient par $L_j$. Alors
$$\mathcal{U}_{k,l} = p(q^{-1} ( \mathcal{V}_l^k))$$
Comme $\mathcal{V}_l$ est un ouvert quasi-compact, et que $p$ et $q$ sont finis et étales, $\mathcal{U}_{k,l}$ est également un ouvert quasi-compact.
\end{proof}

De plus, il n'est pas difficile de voir que la suite $(\mathcal{U}_{\underline{m}})$ est décroissante, donc définit une bonne suite d'ouverts de $\mathcal{U}$. Décomposons maintenant la correspondance de Hecke sur chacune des crans.

\begin{lemm}
Pour tout $\underline{m} \in S_{N+1}$, et $x \in \mathcal{U}_{\underline{m}} \backslash \mathcal{U}_{\underline{m}+1}$, on a $N_i(x)=m_i$
\end{lemm}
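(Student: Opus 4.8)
The plan is to reduce the set-theoretic definition of the strata to a purely combinatorial comparison of sequences. First I would attach to every $x \in \mathcal{U}$ the sequence $\underline{n}(x) := (N_i(x))_{i \in S_N}$. Since $(\mathcal{V}_i)_{i \in S_N}$ is a stratification, it is a decreasing family of quasi-compact opens, so $U_p(x) \cap \mathcal{V}_i$ decreases with $i$; as $N_i(x)$ equals the cardinal of $U_p(x) \cap \mathcal{V}_i$ and is bounded by $N_{max}$, the sequence $\underline{n}(x)$ is decreasing and bounded by $N_{max}$, hence $\underline{n}(x) \in S_{N+1}$.

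The central step is to prove the identity
$$\mathcal{U}_{\underline{m}} = \{ x \in \mathcal{U} : \underline{n}(x) \geq \underline{m} \},$$
where $\geq$ denotes the lexicographic order on $S_{N+1}$. Unwinding the definition, $x \in \mathcal{U}_{\underline{m}}$ means that for every $j \in S_N$ one has $n_i > m_i$ for some $i < j$, or $n_j \geq m_j$. To prove $\supseteq$, assume $\underline{n}(x) \geq \underline{m}$ and let $i_0$ be the first index where $\underline{n}(x)$ and $\underline{m}$ differ (if any), so $n_{i_0} > m_{i_0}$; for $j \leq i_0$ the second clause holds because $n_j = m_j$ for $j < i_0$ and $n_{i_0} > m_{i_0}$, while for $j > i_0$ the first clause holds with $i = i_0$. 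For $\subseteq$, if $\underline{n}(x) < \underline{m}$ with first difference at $i_0$ (so $n_{i_0} < m_{i_0}$ and $n_i = m_i$ for $i < i_0$), then $j = i_0$ violates both clauses, contradicting $x \in \mathcal{U}_{\underline{m}}$. This gives the identity, and likewise $\mathcal{U}_{\underline{m}+1} = \{ x : \underline{n}(x) \geq \underline{m}+1 \}$.

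It then remains to check that $\underline{m}+1$ is the immediate successor of $\underline{m}$ in $(S_{N+1}, \leq)$, so that $\mathcal{U}_{\underline{m}} \backslash \mathcal{U}_{\underline{m}+1}$ is exactly $\{ x : \underline{m} \leq \underline{n}(x) < \underline{m}+1 \} = \{ x : \underline{n}(x) = \underline{m} \}$, which is the assertion $N_i(x) = m_i$. The inequality $\underline{m} < \underline{m}+1$ is immediate since the two sequences first differ at index $i+1$, where the second is larger by one. The hard part will be ruling out any $\underline{n} \in S_{N+1}$ with $\underline{m} < \underline{n} < \underline{m}+1$: this is exactly where the decreasing constraint defining $S_{N+1}$ is essential. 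Writing $a$ for the first index at which such an $\underline{n}$ exceeds $\underline{m}$, a short case analysis shows that $a \leq i$ forces $\underline{n} > \underline{m}+1$; that $a = i+1$ forces $n_{i+1} = m_{i+1}+1$ and then, since $(\underline{m}+1)_j = 0 \leq n_j$ for $j > i+1$, gives $\underline{n} \geq \underline{m}+1$; and that $a > i+1$ is impossible, because the flatness $m_{i+1} = \dots = m_L$ together with $n_j$ decreasing would force $n_a \leq m_{i+1} = m_a$, contradicting $n_a > m_a$. Each case contradicts $\underline{m} < \underline{n} < \underline{m}+1$, so no intermediate sequence exists and the conclusion $N_i(x) = m_i$ follows.
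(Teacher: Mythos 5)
Your proof is correct, and it organizes the argument differently from the paper. You first identify each stratum as a lexicographic sublevel set, $\mathcal{U}_{\underline{m}} = \{ x : \underline{n}(x) \geq \underline{m} \}$ with $\underline{n}(x) = (N_i(x))_i$, and then reduce the lemma to the purely order-theoretic fact that $\underline{m}+1$ is the immediate successor of $\underline{m}$ inside $S_{N+1}$; the paper instead argues pointwise, using $x \in R_j$ for all $j$ together with $x \notin \mathcal{U}_{\underline{m}+1}$ to get $N_i(x) \leq m_i$ up to index $k+1$, then the monotonicity of $i \mapsto N_i(x)$ to propagate the bound, and finally $x \in R_i$ again to get the reverse inequality. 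The two arguments use the same two essential inputs --- that $\underline{n}(x)$ is decreasing (because the $\mathcal{V}_i$ are nested) and that $\underline{m}$ is constant beyond the index $k+1$ appearing in the definition of the successor --- but your version buys something extra: the identity $\mathcal{U}_{\underline{m}} = \{ \underline{n}(x) \geq \underline{m} \}$ immediately yields that the family $(\mathcal{U}_{\underline{m}})$ is decreasing in $\underline{m}$, a fact the paper dispatches separately with "il n'est pas difficile de voir", and it also handles transparently the boundary case where $\underline{m}$ is the constant sequence $N_{max}$ (there $\underline{m}$ is maximal in $S_{N+1}$, so $\underline{n}(x) \geq \underline{m}$ forces equality). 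The only point to be scrupulous about, which you correctly flag, is that ruling out an intermediate $\underline{n}$ with $\underline{m} < \underline{n} < \underline{m}+1$ genuinely requires $\underline{n}$ to be a decreasing sequence; this is where membership of $\underline{n}(x)$ in $S_{N+1}$, and not merely in the set of all bounded sequences, is used.
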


\begin{proof}
Si $\underline{m}$ est égal à la suite constante égale à $N_{max}$, le résultat est clair, car les fonctions $N_i$ sont bornées par $N_{max}$. \\
Soit $k \geq -1$ le plus grand entier tel que $m_k>m_{k+1}$. On a donc $m_j=m_{k+1}$ pour tout $j> k+1$. \\
Définissons pour $j \in S_N$
$$R_j = \bigcup_{i<j} \{ x \in \mathcal{U}, N_i(x) \geq m_i +1 \} \cup \{ x \in \mathcal{U}, N_j(x) \geq m_j \}$$
Soit $x \in \mathcal{U}_{\underline{m}} \backslash \mathcal{U}_{\underline{m}+1}$  : il est donc dans $R_j$ pour tout $j \in S_N$. De plus, $\mathcal{U}_{\underline{m}+1}$ est l'intersection des $R_j$ pour $j \leq k$ et de 
$$\bigcup_{i<k+1} \{ x \in \mathcal{U}, N_i(x) \geq m_i +1 \} \cup \{ x \in \mathcal{U}, N_{k+1}(x) \geq m_{k+1}+1 \}$$
Comme $x \notin \mathcal{U}_{\underline{m}+1}$, il n'est pas dans ce dernier ensemble et on a $N_i(x) \leq m_i$ pour $i \leq k+1$. De plus, pour $i > k+1$, on a $N_i(x) \leq N_{k+1} (x) \leq m_{k+1} = m_i$, donc $N_i(x) \leq m_i$ pour tout $i \in S_N$. \\
Le fait que $x$ soit dans $R_i$ donne alors $N_{i}(x) = m_i$ pour tout $i \in S_N$.
\end{proof}

Finissons maintenant la démonstration du théorème $\ref{bigtheo}$. Soit $\underline{m} \in S_{N+1}$ ; alors pour tout $x \in \mathcal{U}_{\underline{m}} \backslash \mathcal{U}_{\underline{m}+1}$, il y a exactement $m_i$ points dans $U_p(x) \cap \mathcal{V}_i$, pour tout $i \in S_N$. Il y a donc $m_i - m_{i+1}$ points dans $U_p(x) \cap (\mathcal{V}_i \backslash \mathcal{V}_{i+1})$, et $N_{max}-m_0$ points dans $U_p(x) \cap (X_{rig} \backslash \mathcal{V}_0)$. On peut alors décomposer l'opérateur de Hecke sur $\mathcal{U}_{\underline{m}} \backslash \mathcal{U}_{\underline{m}+1}$ en  
$$U_{p} = U_{p,\underline{m}}^{good} \coprod_{i \in S_N} U_{p,\underline{m},i}^{bad}$$
où l'opérateur $U_{p,\underline{m}}^{good}$ correspond aux $N_{max} - m_0$ points dans $X_{rig} \backslash \mathcal{V}_0 = X_{> dg - 1 + \beta}$ et $U_{p,\underline{m},i}^{bad}$ aux $m_i - m_{i+1}$ points dans $\mathcal{V}_i \backslash \mathcal{V}_{i+1}$. En effet, d'après la proposition $\ref{finitefiber}$, sur $\mathcal{U}_{\underline{m}} \backslash \mathcal{U}_{\underline{m}+1}$, on peut décomposer l'opérateur $U_p$ en $U_{p,\underline{m}}^{good} \coprod U_{p,\underline{m}}^{bad}$, où $U_{p,\underline{m}}^{good}$ correspond aux $N_{max} - m_0$ points dans $X_{rig} \backslash \mathcal{V}_0$ et $U_{p,\underline{m}}^{bad}$ aux $m_0$ points dans $\mathcal{V}_0$. En appliquant à nouveau la proposition $\ref{finitefiber}$, on voit qu'on peut décomposer $U_{p,\underline{m}}^{bad}$ en $U_{p,\underline{m},0}^{bad} \coprod \widetilde{U_{p,\underline{m},0}^{bad}}$, où $U_{p,\underline{m},0}^{bad}$ correspond aux $m_0 - m_1$ points dans $\mathcal{V}_0 \backslash \mathcal{V}_{1}$ et $\widetilde{U_{p,\underline{m},0}^{bad}}$ aux $m_1$ points dans $\mathcal{V}_1$. En itérant ce processus, on obtient bien la décomposition de $U_p$ annoncée. \\
Or par récurrence, nous avons une décomposition de $U_p^N$ sur $\mathcal{V}_i \backslash \mathcal{V}_{i+1}$, ce qui donne une décomposition de $U_p^{N+1}$ sur $\mathcal{U}_{\underline{m}} \backslash \mathcal{U}_{\underline{m}+1}$. \\
L'opérateur $U_{p,\underline{m}}^{good}$ est bien d'image incluse dans $X_{> dg - 1 + \beta}$, et les opérateurs $U_{p,\underline{m},i}^{bad}$ correspondent à des supplémentaires de degré supérieur ou égal à $1 - \beta$, donc ont leur image incluse dans $X_{\leq dg - 1 + \beta}$. \\
Enfin, il possible de faire surconverger la décomposition précédente en prenant $ \beta < \beta' < 1$, auquel cas $\mathcal{V}'=X_{[0,dg-1+\beta']}$ est un voisinage strict de $\mathcal{V}$. Par récurrence $\mathcal{V}_i'$ est un voisinage strict de $\mathcal{V}_i$ pour tout $i$, et on obtient donc que $\mathcal{U}_{\underline{m}}'$ est un voisinage strict de $\mathcal{U}_{\underline{m}}$ pour tout $\underline{m}$, où $(\mathcal{U}_{\underline{m}}')$ est la bonne suite d'ouverts obtenue pour $\beta'$.

\end{proof}

Nous avons décomposé l'opérateur $U_p^N$ sur chaque cran de la suite d'ouverts en utilisant par récurrence la décomposition obtenue pour $U_p^{N-1}$.  Il est en fait possible d'obtenir une autre suite d'ouverts en utilisant uniquement l'opérateur $U_{p^N}$, qui est égal à $U_p^N$. Rappelons la définition de cet opérateur. \\
Soit $C_{N,K}$ l'espaces de modules sur $K$ dont les $S$-points sont les couples $(A,\lambda,\iota,\eta, H_i,L)$ où $(A,\lambda,\iota,\eta, H_i) \in X_K(S)$, et $L$ est un sous-groupe de $A[p^N]$ de rang $Ndg$, totalement isotrope, stable par l'action de $O_F$ et disjoint de $H_g$. On dispose de deux morphismes $p_1$ et $p_2$ de $C_{N,K}$ vers $X_K$ : $p_1$ est l'oubli de $L$ et $p_2$ est le quotient par $L$. On note $C_N^{an}$ l'analytifié de $C_{N,K}$, et $C_{N,rig} = p_1^{-1} (X_{rig})$. 

\begin{defi}
L'opérateur de Hecke géométrique $U_{p^N}$ agissant sur les parties de $X_{rig}$ est défini par $S \to p_2(p_1^{-1}(S))$.
\end{defi}

On vérifie alors facilement que $U_{p^N} (S) = U_p^N (S)$ pour toute partie $S$ de $X_{rig}$. On peut alors appliquer les résultats de la section précédente en remplaçant l'opérateur $U_p$ par $U_{p^N}$. On obtient le résultat suivant.

\begin{theo} \label{autre_decompo}
Soit $\beta$ un rationnel compris entre $0$ et $1$. Il existe une bonne suite d'ouverts $(\mathcal{V}_i(N))_i$ de longueur $N_{max}^N$, tel que si $x \in \mathcal{V}_i(N) \backslash \mathcal{V}_{i+1}(N) $, le cardinal de $U_{p^N}(x) \cap X_{\leq dg-1+\beta}$ est égal à $i$. Sur $\mathcal{V}_i(N) \backslash \mathcal{V}_{i+1}(N)$, on peut décomposer l'opérateur $U_{p^N}$ en
$$U_{p^N} = U_{p^N,i}^{good} \coprod U_{p^N,i}^{bad}$$
où $U_{p^N,i}^{good}$ a son image incluse dans $X_{> dg-1+\beta}$, et $U_{p^N,i}^{bad}$ a son image incluse dans $X_{\leq dg-1+\beta}$. \\
Si $\beta' > \beta$ est un autre rationnel, et si $(\mathcal{V}_i(N)')_i$ est la suite d'ouverts obtenue pour $\beta'$, alors $\mathcal{V}_i(N)$ est un voisinage strict de $\mathcal{V}_i(N)$ pour tout $i$.
\end{theo}

\begin{proof}
La construction est entièrement analogue à celle effectuée dans le paragraphe précédent, en remplaçant l'opérateur $U_p$ par $U_{p^N}$.
\end{proof}

Le lien entre les deux suites d'ouverts, et les décompositions de $U_p^N$ est fait dans la proposition suivante.

\begin{prop}
Soit $\beta$ un rationnel avec $0< \beta <1$, et soit $(\mathcal{U}_i(N))_{i \in S_N}$ et $(\mathcal{V}_i(N))_{0 \leq i \leq N_{max}^N}$ les deux bonnes suites d'ouverts obtenues précédemment. Alors sur $\mathcal{U}_i(N) \backslash \mathcal{U}_{i+1}(N)$, le cardinal de $U_{p^N}(x) \cap X_{\leq dg-1+\beta}$ est constant ; on a donc $\mathcal{U}_i(N) \backslash \mathcal{U}_{i+1}(N) \subset \mathcal{V}_j(N) \backslash \mathcal{V}_{j+1}(N)$ pour un certain entier $j$. On a de plus
$$U_{p^N,j}^{good} =  \coprod_{k=0}^{N-1} U_p^{N-1-k} \circ T_k $$
$$U_{p^N,j}^{bad} = T_N$$
avec les notations précédentes pour $T_k$ et $T_N$, cette égalité étant vraie sur $\mathcal{U}_i(N) \backslash \mathcal{U}_{i+1}(N)$.
\end{prop}

\begin{proof}
Le fait que sur un cran $\mathcal{U}_i(N) \backslash \mathcal{U}_{i+1}(N)$ le nombres de mauvais supplémentaires soit constant découle de la construction : ce nombre est égal à 
$$\sum_{i_1 \in S_{N-1}, \dots, i_{N-1} \in S_1} N_{i_{N-1}} N_{i_{N-2},i_{N-1}} \dots N_{i,i_1}$$
où $N_{i_k,i_{k+1}}$ est le nombre de points de $U_{p,i_{k},i_{k+1},N}^{bad}$. \\
L'égalité des décompositions provient du fait que tous les opérateurs \og good \fg $ $ ont leur image incluse dans $X_{\leq dg-1+\beta}$, tous les opérateurs \og bad \fg $ $ ont leur image dans $X_{< dg-1+\beta}$, et que ces espaces sont disjoints.
\end{proof}

La première décomposition de $U_p^N$ est donc plus fine que la seconde. Nous verrons dans la suite que les deux décompositions auront leur utilité.

\section{Prolongement analytique des formes modulaires de Hilbert-Siegel} \label{prolonge}

\subsection{Formes modulaires classiques et surconvergentes}  \label{formcla}

Soit le schéma en groupes $\mathcal{G}=Res_{O_{F_p}/\mathbb{Z}_p} GL_g$, $B$ le Borel supérieur, $U$ son radical unipotent, et $T$ son tore maximal. Soit $X(T)$ le groupe des caractères de $T$ sur $\overline{K}$. 

\begin{rema} 
Les $\overline{K}$-points de $\mathcal{G}$ sont $GL_g (\overline{K} \otimes_{\mathbb{Z}_p} O_{F_p} )$, et $\overline{K} \otimes_{\mathbb{Z}_p} O_{F_p}$ s'identifie à $\overline{K}^d$ par le morphisme $x \otimes f\to~(x \sigma_i(f))$, où les $\sigma_i$ sont les éléments de $Hom_{\mathbb{Q}_p} (F_p,\overline{K})$.
\end{rema}

\noindent \`A toute famille $\kappa= (k_{j,i})_{1 \leq j \leq g, 1 \leq i \leq d}$, on associe le caractère de $T$
$$(x_1 \otimes f_1, \dots, x_g \otimes f_g) \to \prod_{i,j} (x_j \sigma_i (f_j))^{k_{j,i}}$$
L'application précédente donne donc un isomorphisme $(\mathbb{Z}^g)^d \simeq X(T)$. \\

\noindent \textbf{\textit{Notation :}}
Si $\kappa = (k_{j,i})$ correspond à un caractère, on note $\kappa' = (-k_{g+1-j,i})$. \\

\begin{defi}
Soit $R$ un anneau sur $O_{F_p}$, $A \to $ Spec $R$ une variété abélienne de dimension $g$, et $e$ sa section unité. Soit $ \omega : (R \otimes_{\mathbb{Z}_p} O_{F_p} )^g \to e^* \Omega_{A/R}^1$ une trivialisation du faisceau conormal relatif à la section unité. On définit l'action de $\mathcal{G}(R)$ sur $\omega$ par $g . \omega = \omega \circ g^{-1}$.
\end{defi}

Dans la suite, nous fixons un poids $\kappa \in X(T)$, avec $k_{1,i} \geq \dots \geq k_{g,i}$ pour tout $1 \leq i \leq d$.

\begin{defi}
Une forme modulaire de Siegel-Hilbert $F$ de niveau $\Gamma^0(p)$ sur $X$ de poids $\kappa \in X(T)$ est une loi qui à toute $O_K$-algèbre $R$, tout $R$-point $x=(A,\lambda,\iota,\eta, H_i)$ de $X$, et toute trivialisation $\omega : (R \otimes_{\mathbb{Z}_p} O_{F_p} )^g \to e^* \Omega_{A/R}^1$ associe un élément de $R$ noté $F(x, \omega)$, avec de plus les propriétés suivantes :
\begin{itemize}
\item Cette loi est fonctorielle en $R$.
\item Cette loi vérifie l'équation fonctionnelle suivante :
$$ \forall t \in T(R), u \in U(R), F (x,\omega \circ tu) = \kappa'(t) F(x, \omega)$$
\end{itemize}
\end{defi}

\begin{defi}
Soit $\mathcal{T} = $Isom$_{\mathcal{O}_X} ((\mathcal{O}_X \otimes_{\mathbb{Z}_p} O_{F_p} )^g , e^* \Omega_{A/X}^1)$, où $A$ est la variété universelle au-dessus de $X$. C'est un torseur sur $X$ sous le groupe $\mathcal{G} \times_{\mathbb{Z}_p} O_K$. Si $\kappa \in X(T)$ ; on définit $\omega^{\kappa} := \phi_* O_\mathcal{T}[\kappa']$, où $\phi : \mathcal{T} \to X$ est la projection, et $\phi_* O_\mathcal{T}[\kappa']$ désigne le sous-faisceau de $\phi_* O_\mathcal{T}$ où $B=T U$ agit par le caractère $\kappa'$ sur $T$ et le caractère trivial sur $U$. C'est le faisceau des formes modulaires de poids $\kappa$, et il est localement libre.
\end{defi}

\begin{prop}
Une forme modulaire de poids $\kappa$ à coefficients dans une $O_K$-algèbre $C$ est donc une section globale de $\omega^\kappa$, soit un élément de $H^0(X \times $Spec $C , \omega^\kappa)$.
\end{prop}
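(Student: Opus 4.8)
The plan is to establish a natural bijection between the weight-$\kappa$ modular forms and $H^0(X,\omega^\kappa)$ by unwinding both sides in terms of the $\mathcal{G}$-torsor $\mathcal{T}$. First I would recall that $\mathcal{T} = \mathrm{Isom}_{O_X}(O_X^g, e^*\Omega^1_{A/R})$ represents the functor sending an $X$-scheme (equivalently, an $O_K$-algebra $R$ together with an $R$-point $x = (A,\iota,\lambda,\eta,H_i)$) to the set of trivialisations $\omega$. Hence, by Yoneda, a functorial rule $(x,\omega)\mapsto F(x,\omega)\in R$ is exactly the datum of a global function on $\mathcal{T}$, that is, an element of $\mathcal{O}(\mathcal{T}) = H^0(X,\phi_* O_{\mathcal{T}})$. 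This already identifies modular forms \emph{without} the functional equation with the global sections of $\phi_* O_{\mathcal{T}}$.

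Next I would match the functional equation with the eigen-condition defining the subsheaf $\omega^\kappa = \phi_* O_{\mathcal{T}}[\kappa']$. Writing $\tilde F$ for the function on $\mathcal{T}$ attached to $F$, the action $g\cdot\omega = \omega\circ g^{-1}$ of $\mathcal{G}$ on trivialisations induces the $\mathcal{G}$-action on $\phi_* O_{\mathcal{T}}$, and the equation
$$ F(x,\omega\circ tu) = \kappa'(t)\, F(x,\omega), \qquad t\in T(R),\ u\in U(R), $$
says precisely that $\tilde F$ is invariant under $U$ and transforms under $T$ through the character $\kappa'$. Thus $\tilde F$ lies in the $B$-isotypic piece $\phi_* O_{\mathcal{T}}[\kappa'] = \omega^\kappa$, and conversely every global section of $\omega^\kappa$, evaluated on pairs $(x,\omega)$, yields a functorial rule obeying the functional equation. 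The only thing to keep straight here is the bookkeeping between $\kappa$ and $\kappa' = (-k_{g+1-j,i})$ together with the inverse in $g\cdot\omega = \omega\circ g^{-1}$; once the convention is fixed, the two transformation laws coincide on the nose.

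The one genuine subtlety, and the reason for the hypothesis $dg > 1$, is that the functorial definition ranges over \emph{all} test objects $(A,\iota,\lambda,\eta,H_i)$, whereas $H^0(X,\omega^\kappa)$ consists of honest global sections of a locally free sheaf on $X$; to know that no supplementary boundedness or holomorphy condition at the boundary is concealed in the definition, I would invoke the Koecher principle, which holds exactly when $dg > 1$ and forces the sections to extend over the toroidal boundary. In the excluded case $dg = 1$ one would have to impose regularity at the cusps by hand, which is precisely why the statement is phrased with this restriction. With Koecher in hand the bijection of the previous two paragraphs is seen to respect the coherent structure, and the identification of a weight-$\kappa$ modular form with an element of $H^0(X,\omega^\kappa)$ follows. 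The main content is therefore conceptual — the torsor/Yoneda translation together with the Koecher principle — rather than computational.
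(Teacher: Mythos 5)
Your proposal is correct and coincides with what the paper intends: the proposition is stated there without proof, as an immediate unwinding of the definition of $\omega^\kappa=\phi_*O_{\mathcal T}[\kappa']$ via the torsor $\mathcal T$ (your Yoneda translation and the matching of the functional equation with the $\kappa'$-isotypic condition, inverse and $\kappa\mapsto\kappa'$ bookkeeping included), and the paper assigns the hypothesis $dg>1$ to the Koecher principle for exactly the reason you give. Nothing is missing.
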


L'espace des formes de Hilbert-Siegel de poids $\kappa$ est $H^0(X, \omega^\kappa)$, et l'espace des formes modulaires sur $X_K$ est $H^0(X_K, \omega^\kappa)$. On note encore $\omega^\kappa$ l'analytifié de $\omega^\kappa$, qui est un faisceau sur $X_{rig}$. Faisons de plus l'hypothèse (inoffensive) que $gd > 1$ (le cas $g=d=1$ correspond au cas de la courbe modulaire, qui est bien connu). Cette hypothèse permet de négliger les pointes dans la définition des formes modulaires, à l'aide d'un principe de Koecher. Rappelons brièvement ce résultat.

\begin{theo}[$\cite{P-S 1}$ partie $6$]
Il existe une compactification toroïdale $\overline{X}$ de $X$ définie sur $O_K$, dépendant d'un choix combinatoire. Le schéma $\overline{X}$ est propre sur $O_K$, et le faisceau $\omega^\kappa$ s'étend à $\overline{X}$. De plus, on a le principe de Koecher algébrique et rigide, c'est-à-dire
\begin{displaymath}
\begin{array}{ccc}
H^0(\overline{X}, \omega^\kappa) = H^0 (X, \omega^\kappa) & \text{ et } & H^0(\overline{X}_{rig}, \omega^\kappa) = H^0 (X_{rig}, \omega^\kappa)
\end{array}
\end{displaymath}
où $\overline{X}_{rig}$ désigne l'espace rigide associé à $\overline{X}$.
\end{theo}

Puisque le schéma $\overline{X}$ est propre sur $O_K$, on a de plus par un principe GAGA ($\cite{EGA_3}$ partie $5.1$) $H^0(\overline{X} \times_{O_K} K, \omega^\kappa) = H^0(\overline{X}_{rig}, \omega^\kappa)$. En résumé, l'espace des formes modulaires est $H^0 (X_K,\omega^\kappa) = H^0 (X_{rig}, \omega^\kappa)$. Nous pouvons donc bien négliger les pointes dans la définition des formes modulaires. Définissons maintenant l'espace des formes surconvergentes.

\begin{defi}
L'espace des formes surconvergentes de poids $\kappa$ est le module 
$$H^0(X_K,\omega^\kappa)^\dagger := \text{colim}_\mathcal{V} H^0 (\mathcal{V},\omega^\kappa)$$
où la colimite est prise sur les voisinages stricts $\mathcal{V}$ du tube multiplicatif $X_{dg}$ dans $X_{rig}$.
\end{defi}

Une forme modulaire est donc en particulier une forme surconvergente. 

\begin{prop}
Nous avons donc une application injective
$$H^0(X,\omega^\kappa) \hookrightarrow H^0(X,\omega^\kappa)^\dagger$$
\end{prop}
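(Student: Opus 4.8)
The plan is to recognize the displayed arrow as nothing more than the restriction-to-a-germ map, and then to reduce injectivity to a rigid-analytic identity principle. Concretely, by GAGA together with the Koecher principle we have already identified a classical form with a global section $f \in H^0(X_{rig}, \omega^\kappa)$, and the map into the overconvergent space sends $f$ to its class in $\mathrm{colim}_\mathcal{V}\, H^0(\mathcal{V}, \omega^\kappa)$. Since this is a filtered colimit over strict neighborhoods $\mathcal{V}$ of $X_{dg}$ ordered by reverse inclusion, the class of $f$ vanishes if and only if $f_{|\mathcal{V}} = 0$ for some such $\mathcal{V}$. By linearity it therefore suffices to show that a global section of $\omega^\kappa$ on $X_{rig}$ which vanishes on a strict neighborhood of the multiplicative tube must vanish identically.

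First I would record that $\mathcal{V}$ is a nonempty admissible open: it contains $X_{dg}$, and the multiplicative-ordinary tube is nonempty because the ordinary locus of $X_{rig}$ is nonempty by Wedhorn's theorem (as recalled above, the reflex field being $\mathbb{Q}$). Next I would invoke the identity principle for sections of a locally free sheaf on a reduced rigid space: the vanishing locus of $f$ is a closed analytic subspace of $X_{rig}$, and if it contains the nonempty admissible open $\mathcal{V}$ then, on each connected component meeting $\mathcal{V}$, it must be the whole component. Here I use that $X_K$ is smooth, so each connected component of $X_{rig}$ is irreducible and every nonempty admissible open in it is Zariski dense; vanishing on $\mathcal{V}$ thus propagates to every component that $\mathcal{V}$ meets.

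The main obstacle is precisely to see that $\mathcal{V}$ meets every connected component of $X_{rig}$, equivalently that $X_{dg}$ does. I would deduce this from the density of the ordinary locus: by Wedhorn the ordinary locus is open and dense in the special fiber of $X$, hence meets every connected component, and under the specialization map the multiplicative tube $X_{dg}$ surjects onto this ordinary locus, so it meets the tube over every component. Granting this, $f$ vanishes on a Zariski-dense admissible open of each irreducible component of $X_{rig}$ and is therefore identically zero, which yields the claimed injectivity. The only delicate point to make fully rigorous is the compatibility of connected components under analytification and specialization; the remainder is the formal identity principle.
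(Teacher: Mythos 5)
Your overall strategy is exactly the paper's: reduce injectivity to the identity principle for sections of a locally free sheaf on the reduced rigid space $X_{rig}$, so that everything comes down to the single non-formal claim that the multiplicative tube $X_{dg}$ meets every connected component of $X_{rig}$. The formal part of your argument (filtered colimit, vanishing on some strict neighborhood, propagation along irreducible components) is fine and matches what the paper leaves implicit.

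The gap is in your justification of the component-meeting claim. You write that \og by Wedhorn the ordinary locus is open and dense in the special fiber of $X$ \fg\ and that \og the multiplicative tube $X_{dg}$ surjects onto this ordinary locus \fg. Neither statement is correct as written. First, the $X$ of this paper carries an Iwahori-type level structure at $p$ (the full flag $H_1\subset\dots\subset H_g$), whereas Wedhorn's density theorem applies to the PEL variety with hyperspecial (prime-to-$p$) level; the special fiber of the Iwahori model is not the object Wedhorn controls, and its ordinary locus has several strata. Second, $X_{dg}$ specializes only onto the stratum of that ordinary locus where $H_g$ reduces to a multiplicative group, not onto the whole ordinary locus, so the surjectivity you invoke fails. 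The point you yourself flag as \og delicate \fg\ (compatibility of components under specialization) is precisely where the argument needs a new input. The paper supplies it differently: one first shows that the connected components of $X$ coincide with those of the variety \emph{without} level at $p$ (by passing to the complex points and using the Siegel upper half-space), and only then applies Wedhorn to that no-level variety, over whose ordinary locus the canonical multiplicative subgroup furnishes points of $X_{dg}$ in every component. With that substitution your proof closes; without it, the key step does not go through.
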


\begin{proof}
Cela découle de l'unicité du prolongement analytique et du fait que le lieu ordinaire-multiplicatif rencontre toutes les composantes connexes de $X$. Justifions rapidement cette affirmation : les composantes connexes de $X$ sont les mêmes que celle de la variété sans niveau en $p$ (cela se démontre en passant aux complexes et en utilisant le demi-plan de Poincaré-Siegel). Il suffit ensuite d'utiliser le fait que le lieu ordinaire est dense sur $\mathbb{F}_p$ pour la variété sans niveau (voir $\cite{We}$). 
\end{proof}

\begin{defi}
L'image du morphisme précédent est l'espace des formes classiques.
\end{defi}

\subsection{Opérateur de Hecke} \label{defHecke}

Dans les sections précédentes, nous avons défini la correspondance de Hecke ensembliste $U_p$. Cette correspondance permet de définir un opérateur au niveau des formes modulaires. \\
Rappelons que l'on a défini l'espace de modules $C_K$ sur $K$ dont les $S$-points sont les $(A,\lambda,\iota,\eta, H_i,L)$ avec $(A,\lambda,\iota,\eta, H_i) \in X(S)$ et $L$ supplémentaire générique de $H_g$. \\
Nous avons deux morphismes $p_1, p_2 : C_{rig} \to X_{rig}$ : $p_1$ est l'oubli de $L$, et $p_2$ est le quotient par $L$. Notons également $\pi : A \to A/L$ l'isogénie universelle au-dessus de $C$.  \\
Celle-ci induit un isomorphisme $\pi^* : \omega_{(A/L)/X} \to \omega_{A/X}$, et donc un morphisme \\
$\pi^* (\kappa) :~p_2^* \omega^\kappa \to~p_1^* \omega^\kappa$. Pour tout ouvert $\mathcal{U}$ de $X_{rig}$, nous pouvons donc former le morphisme composé

\begin{displaymath}
\widetilde{U}_p :   H^0(U_p(\mathcal{U}),\omega^\kappa) \to H^0 ( p_1^{-1} (\mathcal{U}), p_2^* \omega^\kappa) \overset{\pi^*(\kappa)}{\to} H^0(p_1^{-1}(\mathcal{U}) , p_1^* \omega^\kappa) \overset{Tr_{p_1}}{\to}  H^0(\mathcal{U},\omega^\kappa)
\end{displaymath}

\begin{defi}
L'opérateur de Hecke agissant sur les formes modulaires est défini par $U_p :=~\frac{1}{p^{n_0}} \widetilde{U}_p$ avec $n_0=\frac{dg(g+1)}{2}$.
\end{defi}

\begin{rema}
Cette normalisation optimise l'intégrabilité de l'opérateur de Hecke, comme le montre un calcul sur les $q$-développements.
\end{rema}

Nous avons une description explicite de cet opérateur. Soit $\mathcal{U}$ un ouvert de $X_{rig}$, $f \in~H^0(U_p(\mathcal{U}),\omega^\kappa)$, $x=(A,\lambda,\iota,\eta, H_i) \in \mathcal{U}(\overline{K})$ et $\omega  \in \omega_{A/\overline{K}}$. Alors
$$(U_p f)(x,\omega) = \frac{1}{p^{n_0}} \sum_L f( A/L,\iota',\lambda',\eta', \text{Im}(H_i \to (A/L)[p]),\omega')$$
où $\omega' \in \omega_{(A/L)/\overline{K}}$ est définie par $\pi^* \omega' = \omega$, et la somme portant sur les supplémentaires génériques $L$ de $H_g$.  \\
Nous avons de plus des formules analogues pour les opérateurs $U_{p,i}^{good}$, $U_{p,i}^{bad}$ et $U_{p,i,j}^{bad}$, dans ce cas la somme portera seulement sur certains supplémentaires génériques $L$ de $H_g$ bien déterminés. \\
Ainsi, avec les notations de $\ref{zoneint}$ et $\ref{Hecke}$, on a 
$$(U_{p,i}^{good} f)(x,\omega) = \frac{1}{p^{n_0}} \sum_{\text{deg} L < 1 - \beta } f( A/L,\iota',\lambda',\eta', \text{Im}(H_i \to (A/L)[p]),\omega')$$
$$(U_{p,i}^{bad} f)(x,\omega) = \frac{1}{p^{n_0}} \sum_{\text{deg} L \geq 1 - \beta } f( A/L,\iota',\lambda',\eta', \text{Im}(H_i \to (A/L)[p]),\omega')$$
$$(U_{p,,\underline{m},i}^{bad} f)(x,\omega) = \frac{1}{p^{n_0}} \sum_{y \in U_p(x) \cap \mathcal{V}_i \backslash \mathcal{V}_{i+1} } f( y,\omega')$$

\begin{rema}
puisque $U_p$ stabilise les $X_{\geq dg - v}$, $v>0$, lesquels forment une base de voisinages du tube multiplicatif, l'opérateur de Hecke agit sur les formes modulaires surconvergentes.
\end{rema}

L'opérateur de Hecke $U_p$ agissant sur les formes modulaires, on peut le munir d'une norme d'opérateur.

\begin{defi}
Soit $\mathcal{U}$ un ouvert de $X_{rig}$, et $T$ un des opérateurs de Hecke définis précédemment agissant sur les formes modulaires $H^0 (T(\mathcal{U}), \omega^\kappa)$. On définit la norme de $T : H^0 (T(\mathcal{U}),\omega^\kappa) \to~H^0(\mathcal{U},\omega^\kappa)$ par
$$ \Vert T \Vert_\mathcal{U} := \inf \{r >0 , |Tf|_\mathcal{U} \leq r |f|_{T(\mathcal{U})}, \forall f \in H^0(T(\mathcal{U}),\omega^\kappa) \}$$
\end{defi}

\begin{prop} \label{norm}
Soit $\alpha$ et $\beta$ deux rationnels strictement compris entre $0$ et $1$, et $\mathcal{U} \subset X_{[0,dg-1+\alpha]}$ un ouvert sur lequel est défini un opérateur $U_p^{bad}$, cet opérateur étant défini pour le rationnel $\beta$. Alors
$$\Vert U_p^{bad} \Vert_\mathcal{U} \leq p^{n_0- (1 - \beta) \inf_i (k_{g,i})} $$
\end{prop}

L'opérateur $U_p^{bad}$ ne faisant intervenir que des supplémentaires génériques $L$ avec $\deg L~\geq~1-~\beta$, cette proposition découle du lemme suivant.

\begin{lemm} \label{lemnorm}
Soit $T$ un opérateur défini sur un ouvert $\mathcal{U}$, égal à $U_p$, $U_p^{good}$ ou $U_p^{bad}$. On suppose que cet opérateur ne fait intervenir que des supplémentaires génériques $L$ avec deg $L \geq c$, pour un certain $c \geq 0$. Alors
$$ \Vert T \Vert_\mathcal{U} \leq p^{n_0-c \inf_i k_{g,i}}$$
\end{lemm}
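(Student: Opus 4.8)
The plan is to reduce the statement to a pointwise, term-by-term estimate, and to control the discrepancy introduced by the auxiliary differential $\omega'$ through the conormal sheaf of the subgroup by which we quotient. First I would fix a point $x=(A,\iota,\lambda,\eta,H_i)\in\mathcal{U}(\overline{K})$ together with an integral basis $\omega_0$ of $\omega_{A}$, i.e. an $O_{\overline{K}}$-trivialisation of $e^*\Omega^1_{A}$. Since $\omega_0$ is integral it determines an integral generator of $x^*\omega^\kappa$, so that $|(Tf)(x)| = |(Tf)(x,\omega_0)|$. By the explicit formula for the operator, $(Tf)(x,\omega_0)=p^{-n}\sum_{L} f(y_L,\omega'_L)$, the sum running over the finitely many generic supplements $L$ (with $\deg L\ge c$) appearing in $T$, where $y_L=(A/L,\iota',\lambda',\eta',\mathrm{Im}(H_i\to(A/L)[p]))\in T(\mathcal{U})$ and $\omega'_L$ is defined by $\pi_L^*\omega'_L=\omega_0$. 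Because the norm $|\cdot|$ is ultrametric and $|p^{-n}|=p^n$, it suffices to prove for each $L$ the bound $|f(y_L,\omega'_L)|\le p^{-c\inf k_{g,i}}\,|f|_{T(\mathcal{U})}$; taking the supremum over $x\in\mathcal{U}$ then gives $\|T\|_{\mathcal{U}}\le p^{\,n-c\inf k_{g,i}}$, using that $y_L\in T(\mathcal{U})$ so that $|f(y_L)|\le |f|_{T(\mathcal{U})}$.

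The heart of the argument is then a local computation at a single $L$. The quotient isogeny $\pi=\pi_L:A\to A/L$ yields an injection $\pi^*:\omega_{A/L}\hookrightarrow\omega_A$ whose cokernel is the conormal sheaf $\omega_L$ of $L$, and by definition $v(\mathrm{Fitt}_0\,\omega_L)=\deg L$. Since $L$ is stable under $O_F$, the map $\pi^*$ is linear over $O_{F_p}\otimes_{\mathbb{Z}_p}O_{\overline{K}}\cong\prod_{i}O_{\overline{K}}$ and decomposes as $\bigoplus_i\pi^{*,(i)}$ along the embeddings $\sigma_i$. On each $\sigma_i$-component I would apply the theory of elementary divisors over the valuation ring $O_{\overline{K}}$ to choose integral bases $\omega_0$ of $\omega_A$ and $\omega_1$ of $\omega_{A/L}$ adapted to $\pi^*$, so that $\pi^*$ becomes diagonal with entries $p^{a_{j,i}}$, where $a_{j,i}\ge 0$ and $\sum_{i,j}a_{j,i}=\deg L$. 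Unwinding the definition $\pi^*\omega'_L=\omega_0$ gives $\omega'_L=(\pi^*)^{-1}\omega_0=\omega_1\circ t_0$, where $t_0\in T(\overline{K})$ is the diagonal torus element determined by $\sigma_i\big((t_0)_j\big)=p^{-a_{j,i}}$.

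It then remains to read off the weight factor. The functional equation for $f$ gives $f(y_L,\omega'_L)=f(y_L,\omega_1\circ t_0)=\kappa'(t_0)\,f(y_L,\omega_1)$, and $|f(y_L,\omega_1)|=|f(y_L)|$ because $\omega_1$ is integral. With the convention $\kappa'=(-k_{g+1-j,i})$, the character evaluates to
\[
\kappa'(t_0)=\prod_{i,j}\big(p^{-a_{j,i}}\big)^{-k_{g+1-j,i}}=\prod_{i,j}p^{\,a_{j,i}k_{g+1-j,i}},\qquad |\kappa'(t_0)|=p^{-\sum_{i,j}a_{j,i}k_{g+1-j,i}}.
\]
The dominance hypothesis $k_{1,i}\ge\cdots\ge k_{g,i}$ yields $k_{g+1-j,i}\ge k_{g,i}\ge\inf_i k_{g,i}$ for all $j$, and since each $a_{j,i}\ge0$, $\sum_{i,j}a_{j,i}=\deg L\ge c$, and the weights are non-negative in the range of interest, I obtain $\sum_{i,j}a_{j,i}k_{g+1-j,i}\ge(\deg L)\inf_i k_{g,i}\ge c\inf_i k_{g,i}$. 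Combining the three displays gives $|f(y_L,\omega'_L)|\le p^{-c\inf k_{g,i}}|f|_{T(\mathcal{U})}$, which is exactly the per-term estimate needed above.

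I expect the main obstacle to be the second step: identifying $\mathrm{coker}(\pi^*)$ with the conormal sheaf $\omega_L$ and extracting its elementary divisors in an $O_F$-equivariant fashion, so that the torus element $t_0$ and the total exponent $\sum a_{j,i}$ are controlled \emph{exactly} by $\deg L$ rather than merely bounded. Once this dictionary between the degree and the elementary divisors of $\pi^*$ is in place, everything else is ultrametric bookkeeping together with the transformation law of $f$ under $T$.
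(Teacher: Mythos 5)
Your argument is correct and follows essentially the same route as the paper: both rest on the conormal exact sequence $0 \to \omega_{A/L} \overset{\pi^*}{\to} \omega_A \to \omega_L \to 0$, its decomposition along the embeddings of $F_p$, the identity $v(\det \pi_i^*) = \mathrm{deg}_i\, L$, and the dominance $k_{1,i} \geq \dots \geq k_{g,i}$ to extract the factor $p^{-\mathrm{deg}\, L \cdot \inf k_{g,i}}$. The paper states the key bound $\Vert \pi_i^*(\kappa)\Vert \leq p^{-k_{g,i}\, \mathrm{deg}_i L}$ without proof, whereas you make it explicit via Smith normal form and the transformation law under $T(\overline{K})$; the only point to tidy up is that the integral basis $\omega_0$ fixed at the outset need not be adapted to every $\pi_L^*$ simultaneously, so one should first pass to an adapted basis by an element of $\mathcal{G}(O_{\overline{K}})$, which affects neither integrality nor the norms involved.
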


\begin{proof}
Soient $x$ un point de $X_{rig}$, correspondant à un couple $(A,\lambda,\iota,\eta, H_i)$ défini sur $O_{\overline{K}}$ et $L$ un supplémentaire générique de $H_g$. Alors le morphisme $\pi : A \to A/L$ donne une suite exacte de $O_{\overline{K}} \otimes_{\mathbb{Z}_p} O_{F_p}$-modules
$$0 \to \omega_{A/L} \overset{\pi^*}{\to} \omega_A \to \omega_L \to 0$$
En décomposant cette suite exacte selon les éléments de $Hom_{\mathbb{Q}_p}(F_p,\overline{K})$, on obtient
$$0 \to \omega_{A/L,i} \overset{\pi^*_i}{\to} \omega_{A,i} \to \omega_{L,i} \to 0$$
De plus, $v(\det \pi_i^*) = $deg$_i L$, où deg$_i L$ est le degré partiel défini dans la partie $\ref{partial}$. \\
On en déduit que 
$$\Vert \pi_i^* ( \kappa) \Vert \leq p^{- k_{g,i} \text{deg }_i L} $$
soit que 
$$\Vert \pi^*(\kappa) \Vert \leq p^{ - \sum_i k_{g,i} \text{deg }_i L} \leq p^{- \sum_i \inf_i k_{g,i} \text{deg }_i L} = p^{- \text{deg } L  \inf_i k_{g,i} }$$
Le résultat découle alors du fait que deg $L \geq c$.
\end{proof}

\subsection{Le théorème de prolongement analytique} \label{prolongement}

Dans ce paragraphe, nous allons démontrer le résultat suivant.

\begin{theo} \label{maintheo}
Soit $f$ une forme modulaire surconvergente de poids $\kappa = (k_{g,i} \leq~\dots \leq~k_{1,i})$, propre pour $U_p$ avec la valeur propre $a_p$, et supposons que 
$$v(a_p) + \frac{dg(g+1)}{2} < \inf_{1 \leq i \leq d} k_{g,i}$$
Alors $f$ est classique.
\end{theo}

\begin{rema}
le terme $\frac{dg(g+1)}{2}$ est le coefficient de normalisation $n$ de l'opérateur de Hecke, et est égal à la dimension de la variété de Siegel-Hilbert.
\end{rema}
 
\noindent	Soit donc $f$ une forme modulaire surconvergente propre pour $U_p$ de valeur propre $a_p$, avec $v(a_p) +~\frac{dg(g+1)}{2} < \inf k_{g,i}$. La forme modulaire $f$ est définie sur $X_{\geq dg - \gamma}$, pour un certain $\gamma >0$. On va prolonger $f$ à $X_{rig}$ tout entier. Par la dynamique de l'opérateur de Hecke (proposition $\ref{dyna}$), on peut prolonger $f$ à $X_{>dg-1}$ en utilisant l'équation fonctionnelle $f= a_p^{-1} U_p f$. \\
On va prolonger $f$ à $X_{[0, dg-1+\alpha]}$, pour un certain $\alpha >0$. Pour ce faire, nous allons utiliser la décomposition de la correspondance de l'opérateur $U_p^N$. \\
La condition sur le poids va nous garantir que les opérateurs $a_p^{-1} U_p^{bad}$ sont de normes strictement inférieure à $1$. En effet, cela résultera de la proposition $\ref{norm}$, et du fait que l'on peut prendre $\beta$ arbitrairement petit. Soit donc $\varepsilon$ tel que 
$$\frac{dg(g+1)}{2} + v(a_p) < (1-\varepsilon) \inf k_{g,i}$$
Fixons un rationnel $\alpha$ strictement positif avec $\alpha < \varepsilon$. Dans la partie $\ref{Hecke}$, nous avons obtenu une bonne suite d'ouverts de $\mathcal{U} = X_{[0, dg-1+ \alpha]}$ qui a permis de décomposer l'opérateur de Hecke. Nous allons maintenant utiliser cette décomposition. \\

Soient $N \geq 1$, $0<\beta_0 <1$ un rationnel et $(\mathcal{U}_i)_{i \in S_N}$ la bonne suite d'ouverts de $\mathcal{U}$ pour le rationnel $\beta$ obtenue dans la partie $\ref{Hecke}$. De plus, nous avons vu qu'il était possible de faire surconverger toute suite ainsi construite : soit $\beta^{(k)}$ une suite croissante de rationnels avec $\beta^{(0)}=\beta_0$. Pour tout $k \geq 1$, si $(\mathcal{U}_i^{(k)})$ est la bonne suite d'ouverts obtenus pour $\beta^{(k)}$ de $\mathcal{U}$, alors $\mathcal{U}_i^{(k)}$ est un voisinage strict de $\mathcal{U}_i^{(k-1)}$ dans $\mathcal{U}$, pour tout $i \in S_N$. On notera également $\mathcal{U}_i^{(0)} = \mathcal{U}_i$. On suppose également la suite $\beta^{(k)}$ bornée par $\varepsilon$. \\
Soient alors $\mathcal{V}_i = \mathcal{U}_{i}^{(i-1)}$ pour $i \geq 1$ et $\mathcal{V}_i' = \mathcal{U}_i^{(i)}$ pour $i \geq 0$. Alors $\mathcal{V}_i'$ est un voisinage strict de $\mathcal{V}_i$ pour tout $i \geq 1$. \\
Nous avons décomposé l'opérateur $U_p^N$ sur $\mathcal{V}_i' \backslash \mathcal{V}_{i+1}$ en 
$$U_{p}^N = \sum_{k=0}^{N-1} U_p^{N-1-k} T_k + T_N$$
avec $T_0 = U_{p,i}^{good}$, et pour $0 < k < N$
$$T_k = \sum_{i_1 \in S_{N-1}, \dots, i_k \in S_{N-k}}  U_{p,i_k}^{good} U_{p,i_{k-1},i_k}^{bad} \dots U_{p,i,i_1}^{bad}$$
et
$$T_N = \sum_{i_1 \in S_{N-1}, \dots, i_{N-1} \in S_{1}} U_{p,i_{N-1}}^{bad} U_{p,i_{N-2},i_{N-1}}^{bad} \dots U_{p,i,i_1}^{bad}$$  
Les images de $U_{p,i}^{good}$ et de $U_{p,i_k}^{good}$ ($i_k \in S_{N-k}$) sont incluses dans $X_{\geq dg -1 + \beta^{(i)}} \subset X_{\geq dg -1 + \beta_0}$, et les opérateur $U_{p,i,j}^{bad}$, $U_{p,i}^{bad}$ ne font intervenir que des supplémentaires $L$ de degré supérieur ou égal à $1 - \beta^{(i)} > 1 - \varepsilon$. 

\begin{defi}
Les séries de Kassaei sur $\mathcal{V}_i' \backslash \mathcal{V}_{i+1}$ sont définies par
$$f_{N,i} := a_p^{-1} U_{p,i}^{good} f + \sum_{k=1}^{N-1} \sum_{i_1 \in S_1, \dots, i_k \in S_k} a_p^{-k-1} U_{p,i,i_1}^{bad} \dots U_{p,i_{k-1},i_k}^{bad} U_{p,i_k}^{good} f$$
\end{defi}

Cette fonction est bien définie, puisque les opérateurs $U_{p,i}^{good}$ sont soit nuls, auquel cas leur action sur $f$ donne $0$, soit à valeur dans $X_{\geq dg - 1 + \beta_0}$ et $f$ est définie sur cet espace. Ce dernier espace étant quasi-compact, $f$ y est bornée, disons par $M$. \\
La proposition $\ref{norm}$ permet de majorer la norme des opérateurs $a_p^{-1} U_{p,i,j}^{bad}$ : la norme de ces opérateurs est inférieure à
$$u_0 = p^{n_0+v(a_p) - (1-\varepsilon) \inf_i k_{g,i}} < 1$$

\begin{lemm}
Les fonctions $f_{N,i}$ sont uniformément bornées.
\end{lemm}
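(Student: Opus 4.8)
The plan is to exploit the fact that the sup-norm $|\cdot|_{\mathcal{V}_i' \backslash \mathcal{V}_{i+1}}$ is ultrametric, so that the norm of the sum defining $f_{N,i}$ is bounded by the \emph{maximum} of the norms of its individual terms. The decisive consequence is that the number of summands — which grows with $N$ and with the cardinalities of the $S_k$ — plays no role, and it suffices to produce a bound on each term that is uniform in $N$, in $i$, and in the indices $i_1,\dots,i_k$.

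First I would estimate the two building blocks. For the good operators I apply Lemma \ref{lemnorm} with $c \geq 0$ to obtain $\Vert U_{p,i_k}^{good}\Vert \leq p^n$; since the image of $U_{p,i_k}^{good}$ lies in $X_{\geq r+\alpha_0} \subset X_{>r}$, where $f$ is already defined and, this set being quasi-compact, bounded by some constant $M$, I get $|a_p^{-1} U_{p,i_k}^{good} f| \leq p^{v(a_p)+n} M =: M'$, a quantity independent of all the relevant parameters. For the bad operators, Proposition \ref{norm} gives $\Vert a_p^{-1} U^{bad}\Vert \leq u_0 = p^{n+v(a_p)-(1-\epsilon)\inf k_{g,i}}$ for each factor, and the strict inequality $u_0 < 1$ is precisely the content of the choice of $\epsilon$.

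Next I would bound a single term of $f_{N,i}$. Regrouping the $k$-th summand as a composition $(a_p^{-1} U_{p,i,i_1}^{bad}) \circ \cdots \circ (a_p^{-1} U_{p,i_{k-1},i_k}^{bad}) \circ (a_p^{-1} U_{p,i_k}^{good} f)$ and using submultiplicativity of the operator norm along the chain, its norm is at most $u_0^{\,k}\, M'$. As $u_0 < 1$, this is $\leq M'$ for every $k \geq 0$. Passing to the maximum over all terms then yields $|f_{N,i}|_{\mathcal{V}_i' \backslash \mathcal{V}_{i+1}} \leq M'$, uniformly in $N$ and $i$, which is the assertion.

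The point that demands the most care is the submultiplicativity $\Vert S \circ T\Vert_{\mathcal{U}} \leq \Vert S\Vert_{T(\mathcal{U})}\,\Vert T\Vert_{\mathcal{U}}$ applied term by term: one must check that the image of each bad operator genuinely lands in the domain of the operator that follows it. This is exactly where the recursive stratification of Theorem \ref{bigtheo} is used, since the bad operator $U_{p,i_{j-1},i_j}^{bad}$ sends forms on a stratum of the $U_p^{N-j}$-decomposition into forms on $\mathcal{V}_{i_j} \backslash \mathcal{V}_{i_j+1}$, where the $U_p^{N-j-1}$-decomposition — and hence the next operator — is defined. Once this compatibility of domains is confirmed, the per-term estimate above applies verbatim and the uniform bound follows.
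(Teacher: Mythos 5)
Your argument is correct and is essentially the paper's own proof: bound each summand by writing it as a composition of operators $a_p^{-1}U^{bad}$ of norm at most $u_0<1$ applied to $a_p^{-1}U^{good}f$, whose norm is at most $|a_p^{-1}|p^nM$ by Lemma \ref{lemnorm} (with $c=0$) and the boundedness of $f$ on the quasi-compact set $X_{\geq r+\alpha_0}$, then take the ultrametric maximum over the (finitely many) terms. Your extra remark on the compatibility of domains along the chain of bad operators is a point the paper leaves implicit in the recursive construction of Theorem \ref{bigtheo}, but it does not change the argument.
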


\begin{proof}
On a 
$$ |a_p^{-k-1} U_{p,i,i_1}^{bad} \dots U_{p,i_{k-1},i_k}^{bad} U_{p,i_k}^{good} f |_{\mathcal{V}_i' \backslash \mathcal{V}_{i+1}} \leq u_0^k |a_p^{-1} U_{p,i_k}^{good} f |_{U_{p,i_{k-1},i_k}^{bad} \dots U_{p,i,i_1}^{bad} (\mathcal{V}_i' \backslash \mathcal{V}_{i+1} )} \leq |a_p^{-1} | p^{n_0}  M$$
car la norme de $U_{p,i_k}^{good}$ est majorée par $p^{n_0}$.
On peut donc majorer la fonction $f_{N,i}$ par
$$ |f_{N,i} |_{\mathcal{V}_i' \backslash \mathcal{V}_{i+1}} \leq |a_p^{-1} | p^{n_0} M$$
ce qui prouve que les fonctions $f_{N,i}$ sont uniformément bornées. 
\end{proof}

\begin{rema}
sur l'espace $\{ x, U_p^N (x) \cap \mathcal{U} = \emptyset \}$, on aurait pu définir $f_{N,i}$ par $a_p^{-N} U_p^N f$. Le fait de définir $f_{N,i}$ comme une série de Kassaei permet cependant de majorer plus facilement cette fonction. 
\end{rema} 

\begin{rema}
Cette démonstration est plus simple que celle utilisée dans d'autres articles (notamment $\cite{Ka}$ et $\cite{P-S 1}$).
\end{rema}

\noindent Nous avons donc défini une fonction $f_{N,i}$ sur $\mathcal{V}_i' \backslash \mathcal{V}_{i+1}$. Ces fonctions étant uniformément bornées, nous pouvons supposer qu'elles sont à valeurs entières, c'est-à-dire que ce sont des sections du faisceau $\widetilde{\omega}^\kappa$. Nous allons maintenant les recoller pour définir une fonction sur tout $\mathcal{U}$. 

\begin{lemm}
Soient $i,j \in S_N$ et $x \in (\mathcal{V}_i' \backslash \mathcal{V}_{i+1}) \cap (\mathcal{V}_j' \backslash \mathcal{V}_{j+1})$. Alors
$$ | (f_{N,i} -  f_{N,j}) (x) | \leq u_0^N M $$
\end{lemm}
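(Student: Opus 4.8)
The plan is to argue by induction on $N$, the key point being that the series $f_{N,i}$ satisfies a clean recursion relating it to the level-$(N-1)$ series attached to the sub-stratification of $\mathcal{V}=X_{[r-\beta,r+\beta]}$. Unwinding the definition of $f_{N,i}$ and isolating the first Hecke step, one obtains
$$f_{N,i} = a_p^{-1} U_{p,i}^{good} f + a_p^{-1} \sum_{i'} U_{p,i,i'}^{bad} f_{N-1,i'},$$
where $i'$ runs over the strata of $\mathcal{V}$ and $f_{N-1,i'}$ is the Kassaei series of level $N-1$ on $\mathcal{V}_{i'}' \backslash \mathcal{V}_{i'+1}$. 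Crucially, this expression involves $f$ only through $U_{p,i}^{good} f$, whose arguments lie in $X_{\geq r+\alpha_0} \subset X_{>r}$ where $f$ is already defined, and through the already-constructed series $f_{N-1,i'}$; no value of $f$ on the still-undefined part of $\mathcal{U}$ is required. The base case $N=1$ reads $f_{1,i} = a_p^{-1} U_{p,i}^{good} f$, which gives the claim at once once the good parts are seen to coincide.

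Fixing $x$ in the overlap, the first observation is that the good part is intrinsic: the set of generic supplements $L$ of $H_g$ whose quotient leaves $\mathcal{V}_0=\mathcal{V}$ depends only on $x$, not on which stratum of $\mathcal{U}$ one views $x$ in, so $U_{p,i}^{good} f(x) = U_{p,j}^{good} f(x)$ and the good contributions to $f_{N,i}$ and $f_{N,j}$ cancel in the difference. For the bad part, each bad supplement $L$ of $x$ produces a point $y_L \in U_p(x) \cap \mathcal{V}$; from the viewpoint of stratum $i$ it is assigned to the sub-stratum $i'$ with $y_L \in \mathcal{V}_{i'}' \backslash \mathcal{V}_{i'+1}$, and from that of $j$ to a possibly different sub-stratum $j'$, but in either case $y_L$ lies in the overlap $(\mathcal{V}_{i'}'\backslash \mathcal{V}_{i'+1}) \cap (\mathcal{V}_{j'}'\backslash \mathcal{V}_{j'+1})$. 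Hence the difference of the bad parts equals $a_p^{-1}$ times a sum, over the bad supplements $L$, of the pullback along the isogeny of $(f_{N-1,i'}-f_{N-1,j'})(y_L)$.

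By the induction hypothesis applied at $y_L$, which genuinely sits in the relevant sub-stratum overlap, each such difference is bounded by $u_0^{N-1}M$. Applying the operator $a_p^{-1}U^{bad}$, whose norm is at most $u_0$ by Proposition \ref{norm} (every bad supplement satisfies $\deg L > dg-r-\epsilon$), multiplies this by at most $u_0$, and the non-archimedean supremum over the finitely many bad supplements leaves the bound unchanged; we get $|(f_{N,i}-f_{N,j})(x)| \leq u_0 \cdot u_0^{N-1}M = u_0^N M$. The main obstacle is the geometric bookkeeping of the second paragraph: one must check that on the overconvergent overlap the good/bad dichotomy and the assignment of each bad quotient to a sub-stratum are compatible enough that, after the good parts cancel, every surviving term is controlled by the level-$(N-1)$ comparison at a point lying in a sub-stratum overlap. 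Once the recursion and this compatibility are secured, the estimate follows from the single inequality $\Vert a_p^{-1}U^{bad}\Vert \leq u_0$ and the induction.
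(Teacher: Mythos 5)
Your inductive reorganization captures the right estimate --- one factor of $u_0=\Vert a_p^{-1}U_p^{bad}\Vert$ per Hecke step, with the endpoint controlled by $M=|f|_{X_{\geq r+\alpha_0}}$ --- and the recursion $f_{N,i}=a_p^{-1}U_{p,i}^{good}f+a_p^{-1}\sum_{i'}U_{p,i,i'}^{bad}f_{N-1,i'}$ is indeed how the series is built. The gap is the claimed exact cancellation of the good parts. The two series $f_{N,i}$ and $f_{N,j}$ are not built from one and the same decomposition of $U_p^N$: each stratum $\mathcal{V}_i'\backslash\mathcal{V}_{i+1}$ carries its own surconvergent decomposition (its own $\beta$, hence its own cut $\mathcal{V}_0=X_{[r-\beta,r+\beta]}$ and its own sub-stratification of it), and this is forced because Proposition \ref{finitefiber} only splits $U_p$ where the relevant fibre cardinalities are constant, which holds on each primed stratum only for its own counting functions. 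Consequently a generic supplement $L$ of $H_g$ can be \emph{good} for the decomposition attached to $i$ and \emph{bad} for the one attached to $j$; for such an $L$ the difference of the two series contains a cross term comparing $f(y_L)$ with $f_{N-1,j'}(y_L)$, i.e.\ a Kassaei series with $f$ itself, which is not covered by your induction hypothesis (that hypothesis compares two Kassaei series). As written, your argument bounds this term only by $u_0 M$, not $u_0^N M$.

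The repair needs one further ingredient, which is exactly what the paper uses implicitly: since $y_L\in X_{\geq r+\alpha_0}\subset X_{>r}$, the form $f$ is already defined there and satisfies $f=a_p^{-1}U_pf$, so $f(y_L)-f_{N-1,j'}(y_L)=a_p^{-(N-1)}T_{N-1}f(y_L)$ is the purely bad tail of the decomposition of $U_p^{N-1}$ at $y_L$, of norm at most $u_0^{N-1}$ (its endpoints still lie in $X_{\geq r+\alpha_0}$ because the degree does not decrease under $U_p$). Incorporating this --- most cleanly by strengthening the induction to also bound $|f-f_{N,i}|$ by $u_0^N M$ on $(\mathcal{V}_i'\backslash\mathcal{V}_{i+1})\cap X_{\geq r+\alpha_0}$ --- closes your argument. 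Note that the paper itself proceeds non-inductively: it writes $f_{N,i}-f_{N,j}$ in closed form as a signed sum of terms $p^{-Nn}a_p^{-N}\epsilon_i\,f(A/L_{i,N},\cdot)$ indexed by chains of length exactly $N$, each step quotienting by a subgroup of degree $>dg-r-\epsilon$ and the endpoint lying in $X_{\geq r+\alpha_0}$, and then applies Lemma \ref{lemnorm} once to the composite isogeny; the same functional-equation manipulation is what makes all lower-level terms cancel in that closed form, so the two proofs ultimately rest on the same identity.
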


\begin{proof}
Nous allons utiliser la décomposition effectuée dans le théorème $\ref{autre_decompo}$ pour obtenir une autre définition des séries de Kassaei. Cette décomposition pour le rationnel $\beta^{(i)}$ donne une bonne suite d'ouverts $(\mathcal{V}_k)_k$, et une décomposition de $U_{p^N}$ sur $\mathcal{V}_k \backslash \mathcal{V}_{k+1}$ pour tout $k$. Soit $k$ l'entier tel que $x \in \mathcal{V}_k \backslash \mathcal{V}_{k+1}$. Alors on a $f_{N,i}(x) = a_p^{-N} U_{p^N,k}^{good} f (x)$. De même, si 
$(\mathcal{V}_k')_k$ est la bonne suite d'ouverts obtenue dans le théorème $\ref{autre_decompo}$ pour $\beta^{(j)}$, et si $x \in \mathcal{V}_l' \backslash \mathcal{V}_{l+1}'$, alors $f_{N,j}(x) = a_p^{-N} U_{p^N,l}^{good} f (x)$. \\
Supposons par exemple que $i<j$. On a alors $\beta^{(i)} < \beta^{(j)}$, et au-dessus de $x$, l'op\'erateur $U_{p^N,l}^{bad}$ se d\'ecompose en ${U_{p^N,l}^{bad}}' + {U_{p^N,l}^{bad}}''$, l'op\'erateur ${U_{p^N,l}^{bad}}''$ ayant son image incluse dans $X_{]dg - 1 + \beta^{(i)} , dg - 1 + \beta^{(j)}]}$. On a alors $f_{N,i} (x) - f_{N,j} (x) = a_p^{-N} {U_{p^N,l}^{bad}}'' f(x)$. De plus, la norme de l'op\'erateur $a_p^{-N} {U_{p^N,l}^{bad}}''$ est inf\'erieure \`a $u_0^N$ d'apr\`es les calculs sur les normes des op\'erateurs de Hecke. D'o\`u
$$ | (f_{N,i} -  f_{N,j}) (x) | \leq u_0^N |f|_{{U_{p^N,l}^{bad}}''(x)} $$
De plus, l'ensemble ${U_{p^N,l}^{bad}}'' (x)$ \'etant inclus dans $X_{\geq d g -1 + \beta_0 }$, on a $|f|_{{U_{p^N,l}^{bad}}''(x)} \leq M$ ce qui donne la majoration.
\end{proof}

\begin{prop}
Il existe un entier $A_N$ telle que les fonctions $(f_{N,i})_{i\in S_N}$ se recollent en une fonction $g_N \in H^0(\mathcal{U}, \widetilde{\omega}^\kappa / p^{A_N})$.
\end{prop}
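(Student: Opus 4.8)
The plan is to glue not the functions $f_{N,i}$ themselves, which only agree approximately across strata, but their reductions modulo a well-chosen power of $p$. First I would record the two quantitative facts already proved: each $f_{N,i}$ is uniformly bounded, so after rescaling it defines an integral section $f_{N,i} \in H^0(\mathcal{V}_i' \backslash \mathcal{V}_{i+1}, \tilde{\omega}^\kappa)$; and on every overlap $(\mathcal{V}_i' \backslash \mathcal{V}_{i+1}) \cap (\mathcal{V}_j' \backslash \mathcal{V}_{j+1})$ one has $|(f_{N,i} - f_{N,j})(x)| \leq u_0^N M$. Writing $u_0 = p^{-c}$ with $c = (1-\epsilon)\inf k_{g,i} - \tfrac{dg(g+1)}{2} - v(a_p) > 0$ and $M = p^m$, this discrepancy is bounded by $p^{m-cN}$.

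I would then set $A_N := \lfloor cN - m \rfloor$, a positive integer for $N$ large which tends to $+\infty$ with $N$. With this choice $u_0^N M \leq p^{-A_N}$, so the reductions $\overline{f_{N,i}} \in H^0(\mathcal{V}_i' \backslash \mathcal{V}_{i+1}, \tilde{\omega}^\kappa / p^{A_N})$ agree on every overlap, the difference having norm $\leq p^{-A_N}$ and hence vanishing modulo $p^{A_N}$. It then remains to glue. The strata $(\mathcal{V}_i' \backslash \mathcal{V}_{i+1})_{i \in S_N}$ cover $\mathcal{U}$, and I would argue this covering is admissible: each stratum equals $\mathcal{V}_i' \cap (X_{rig} \backslash \mathcal{V}_{i+1})$, and $X_{rig} \backslash \mathcal{V}_{i+1}$ is an admissible open precisely because $\mathcal{V}_{i+1}'$ is a strict neighborhood of $\mathcal{V}_{i+1}$, so that $(\mathcal{V}_{i+1}', X_{rig} \backslash \mathcal{V}_{i+1})$ is an admissible covering of $X_{rig}$ (Proposition \ref{vois}). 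Since $\tilde{\omega}^\kappa / p^{A_N}$ is a sheaf for the admissible topology, the matching sections $\overline{f_{N,i}}$ glue to a single $g_N \in H^0(\mathcal{U}, \tilde{\omega}^\kappa / p^{A_N})$, exactly as in the second half of Lemma \ref{glue}.

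The step I expect to be the main obstacle is establishing that the covering by the thickened strata $\mathcal{V}_i' \backslash \mathcal{V}_{i+1}$ is genuinely admissible, with overlaps large enough to apply the sheaf property: this is exactly the reason the surconvergent stratifications $(\mathcal{U}_i^{(k)})$ and the strict neighborhoods $\mathcal{V}_i' \supsetneq \mathcal{V}_i$ were introduced, since the naive set-theoretic decomposition into the $\mathcal{V}_i \backslash \mathcal{V}_{i+1}$ is only a covering by locally closed pieces. Everything else is bookkeeping, and the hypothesis relating the weight to the slope enters solely through $c > 0$ in the choice of $A_N$; this single inequality both forces the reductions to match and ensures $A_N \to \infty$, which is what will later permit passage to the limit to recover an honest section of $\tilde{\omega}^\kappa$ on $\mathcal{U}$.
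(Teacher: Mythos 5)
Your proposal is correct and matches the paper's argument in all essentials: the same choice of $A_N$ from the bound $u_0^N M \leq p^{-A_N}$ (with $A_N \to \infty$ since $u_0<1$), and the same reliance on the surconvergent strata $\mathcal{V}_i'$ to make the relevant coverings admissible. The only difference is organizational: the paper glues two strata at a time by descending induction from the deepest stratum $\mathcal{V}_L'$, using at each step the admissible two-element covering $(\mathcal{V}_L' \cap \mathcal{V}_{L-1}', \mathcal{V}_{L-1}' \backslash \mathcal{V}_L)$ of $\mathcal{V}_{L-1}'$, whereas you glue all strata at once after asserting admissibility of the full covering $(\mathcal{V}_i' \backslash \mathcal{V}_{i+1})_{i}$ --- which does hold, since it is refined by the common refinement of the admissible coverings $(\mathcal{V}_{i+1}', \mathcal{U} \backslash \mathcal{V}_{i+1})$.
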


\begin{proof}
La décomposition de l'ouvert $\mathcal{U}$ étant finie, soit $L$ tel que $\mathcal{V}_{L+1}$ soit vide. La fonction $f_{N,L}$ est donc définie sur $\mathcal{V}_L'$. La fonction $f_{N,L-1}$ est elle définie sur $\mathcal{V}_{L-1}' \backslash \mathcal{V}_L$. \\
De plus, d'après le lemme précédent, on a
$$ | f_{N,L-1} - f_{N,L} |_{(\mathcal{V}_L' \cap \mathcal{V}_{L-1}') \backslash \mathcal{V}_L} \leq u_0^N M$$
Soit $A_N$ tel que $u_0^N M \leq p^{-A_N}$ ; comme $u_0 < 1$, la suite $(A_N)$ tend vers l'infini. \\
Les fonctions $f_{N,L-1}$ et $f_{N,L}$ sont donc égales modulo $p^{A_N}$ sur $(\mathcal{V}_L' \cap \mathcal{V}_{L-1}') \backslash \mathcal{V}_L$. Comme $(\mathcal{V}_L' \cap~\mathcal{V}_{L-1}' , \mathcal{V}_{L-1}' \backslash  \mathcal{V}_L)$ est un recouvrement admissible de $\mathcal{V}_{L-1}'$ , celles-ci se recollent en une fonction $g_{N,L-1} \in H^0 ( \mathcal{V}_{L-1}', \widetilde{\omega}^\kappa / p^{A_N})$. \\
De même, $g_{N,L-1}$ et $f_{N,L-2}$ sont égales (modulo $p^{A_N}$) sur $(\mathcal{V}_{L-2}' \cap \mathcal{V}_{L-1} ') \backslash \mathcal{V}_{L-1}$, et donc se recollent en $g_{N,L-2} \in H^0 (\mathcal{V}_{L-2}' , \widetilde{\omega}^\kappa / p^{A_N})$. \\
En répétant ce processus, on voit que les fonctions $f_{N,i}$ se recollent toutes modulo $p^{A_N}$ sur $\mathcal{V}_0' = \mathcal{U}$, et définissent donc une fonction $g_N \in H^0(\mathcal{U}, \widetilde{\omega}^\kappa / p^{A_N})$.

\end{proof}

\begin{prop}
Les fonctions $(g_N)$ définissent un système projectif dans $\underset{\leftarrow}\lim $ $ H^0 ( \mathcal{U}, \widetilde{\omega}^\kappa / p^m)$.
\end{prop}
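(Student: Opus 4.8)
The plan is to show that the reductions of the $g_N$ are mutually compatible, so that for each fixed $m$ the sections $g_N \bmod p^m$ (for $N$ large enough that $A_N \geq m$) stabilize and define an element $h_m \in H^0(\mathcal{U}, \tilde{\omega}^\kappa/p^m)$, the family $(h_m)$ then being the desired projective system. Concretely, I would prove that for every $N$ one has $g_{N+1} \equiv g_N$ modulo $p^{A_N'}$ for integers $A_N' \to \infty$; since the reduction maps are precisely the transition morphisms of the system $H^0(\mathcal{U}, \tilde{\omega}^\kappa/p^m)$, this compatibility is exactly what is required for $(g_N)$ to define a compatible system in $\underset{\leftarrow}{\lim}\, H^0(\mathcal{U}, \tilde{\omega}^\kappa/p^m)$.

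Because $\mathcal{U}$ is reduced and quasi-compact, by the gluing lemma $\ref{glue}$ it suffices to bound $|(g_{N+1} - g_N)(x)|$ at every point $x \in \mathcal{U}$. Fix such an $x$ and let $i \in S_N$, $j \in S_{N+1}$ be the indices of the strata containing $x$, so that $g_N(x) = f_{N,i}(x)$ and $g_{N+1}(x) = f_{N+1,j}(x)$. Each of these Kassaei series evaluates $f$ at the endpoints of the chains obtained by iterating the Hecke correspondence: at each step one either lands in $X_{\geq r+\alpha_0}$ (a \emph{good} step, after which the eigenform equation $f = a_p^{-1}U_p f$ lets one stop and read off $f$, which is bounded there by $M$), or remains in the bad zone (a \emph{bad} step).

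The key combinatorial point, which is the cross-level analogue of the lemma comparing $f_{N,i}$ and $f_{N,j}$, is that the chains reaching the good zone in at most $N$ steps contribute identically to both series, so that $f_{N+1,j}(x) - f_{N,i}(x)$ is a sum of contributions of chains remaining bad for the first $N$ steps. Writing such a contribution as $a_p^{-N}$ times a composite $U_p^{bad}\circ\cdots\circ U_p^{bad}$ of $N$ bad operators applied to $f$ evaluated in $X_{\geq r+\alpha_0}$, and invoking Lemma $\ref{lemnorm}$ together with Proposition $\ref{norm}$, which give that each $a_p^{-1}U_p^{bad}$ has norm $\leq u_0 < 1$, one obtains $|(g_{N+1}-g_N)(x)| \leq u_0^N\, |a_p^{-1}|\, p^n M$. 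As $u_0 < 1$, the right-hand side is $\leq p^{-A_N'}$ for a sequence $A_N' \to \infty$, which establishes the compatibility and hence the projective system.

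I expect the main obstacle to be exactly this bookkeeping of chains: one must match the two distinct stratifications indexed by $S_N$ and $S_{N+1}$ and verify that every surviving term of the difference genuinely involves at least $N$ consecutive bad steps, rather than hiding a spurious good/bad ambiguity in the buffer zone $[r+\alpha_0, r+\epsilon]$, so that the norm estimate applies with the full exponent $N$. Once this matching is carried out, the geometric decay in $N$ is immediate from $u_0 < 1$ and the argument closes.
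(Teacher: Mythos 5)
Your overall strategy --- bound $|(g_{N+1}-g_N)(x)|$ by a constant times $u_0^N$ and conclude via the gluing lemma --- is the paper's, and the final estimate you aim for is the right one. But the central claim you lean on, namely that the chains reaching the good zone in at most $N$ steps contribute \emph{identically} to $f_{N,i}$ and $f_{N+1,j}$, is not correct as stated, and it is exactly where the work lies. The stratifications indexed by $S_N$ and $S_{N+1}$ are genuinely different, so a given supplement $L$ at a given step of the chain can be counted as good by one series and bad by the other whenever the corresponding quotient lands in the buffer zone $[r+\alpha_0,r+\epsilon]$; the contributions of short chains therefore do not cancel exactly, only up to an error which itself has to be estimated. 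You flag this yourself as the main obstacle, but you do not resolve it, and resolving it by a direct matching of chains across the two levels would amount to reproving from scratch a cross-level version of the comparison lemma.

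The paper sidesteps this entirely. It writes $f_{N+1,k}=h_1+h_2$, where $h_1$ collects the terms of the Kassaei series with at most $N-1$ bad steps and $h_2$ is the remaining term $\sum a_p^{-N-1}U_p^{bad}\cdots U_p^{bad}U_p^{good}f$ involving exactly $N$ bad steps. The key observation is that $h_1$ is itself a Kassaei series of level $N$ attached to \emph{some} admissible stratification of $\mathcal{U}$, so the already-proven lemma comparing two level-$N$ series at a common point applies verbatim and gives $|(f_{N,i}-h_1)(x)|\leq u_0^N M$ --- that lemma has already absorbed all of the buffer-zone ambiguity you worry about. The tail $h_2$ is then bounded by $u_0^N p^n|a_p^{-1}|M$ because it is a composite of $N$ operators $a_p^{-1}U_p^{bad}$, each of norm at most $u_0$. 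To repair your argument, replace the exact-cancellation claim by this two-term decomposition; alternatively, carry out the cross-level matching you sketch, but with the weaker (and sufficient) conclusion that the short-chain contributions agree only up to $u_0^N M$ rather than exactly.
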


\begin{proof}
Nous allons prouver que $g_{N+1}$ et $g_N$ sont égales modulo $p^{A_N}$. Soit $ x \in \mathcal{U}$ ; nous avons construit en $x$ les séries de Kassaei $f_{N,i}$ et $f_{N+1,k}$. Or le terme $f_{N+1,k}$ provient d'une décomposition de $U_p^{N+1}$ du type
$$U_p^{N+1} = \sum_{l=0}^{N} U_p^{N-l} T_N + T_{N+1}$$
Nous pouvons donc écrire $f_{N+1,k} = h_1 + h_2$, la fonction $h_1$ étant associée à l'opérateur $\sum_{l=0}^{N-1} U_p^{N-1-l} T_N$  
et $h_2$ à $T_N$. \\
Or la fonction $h_1$ est en réalité une série de Kassaei pour une certaine décomposition de $U_p^N$ : le lemme précédent donne donc 
$$ | (f_{N,i} - h_1) (x) | \leq p^{-A_N}$$
De plus, on a
$$h_2 = \sum_{i_1 \in S_1, \dots, i_N \in S_N} a_p^{-N-1} U_{p,i,i_1}^{bad} \dots U_{p,i_{N-1},i_N}^{bad} U_{p,i_N}^{good} f $$
donc comme les opérateurs $a_{p}^{-1} U_{p,i,j}^{bad}$ ont une norme inférieure à $u_0$, 
$$ | h_2(x) | \leq u_0^N p^{n_0} |a_p^{-1} | M = p^{-A_N'}$$
avec $A_N'=A_N - n_0 - v(a_p)$. Quitte à remplacer $A_N$ par $A_N'$, on voit donc que la réduction de $g_{N+1}$ modulo $p^{A_N}$ est égal à $g_N$.
\end{proof}

En utilisant le gluing lemma (lemme $\ref{glue}$), on voit donc que les fonctions $g_N$ définissent une fonction $g \in H^0(\mathcal{U},\omega^\kappa)$. Bien sûr, $g$ se recolle avec $f$ sur $X_{>dg - 1}$. En effet, si $x \in X_{>dg - 1}$, il existe $N_0$ tel que $U_p^{N} (x) \subset X_{\geq dg-1+\varepsilon}$ pour $N \geq N_0$, et la série de Kassaei est alors stationnaire égale à 
$$ a_p^{-N_0} U_p^{N_0} f$$

Nous avons donc étendu $f$ sur $X_{[0, dg-1+\alpha]}$, pour un certain $\alpha > 0$. Comme le recouvrement $(X_{[0,dg-1+\alpha]}, X_{>dg-1})$ est un recouvrement admissible de $X_{rig}$, on peut donc prolonger $f$ à tout $X_{rig}$, ce qui prouve que $f$ est classique.

\section{Cas des variétés de Shimura de type~(C)} \label{symplectique}

Le théorème que nous avons démontré se généralise au cas d'une variété de Shimura PEL de type (C). 

\subsection{Données de Shimura}

Rappelons les données paramétrant les variétés de Shimura PEL de type (C) (voir $\cite{Ko}$). Soit $B$ une $\mathbb{Q}$-algèbre simple munie d'une involution positive $\star$. Soit $F$ le centre de $B$ et $F_0$ le sous-corps de $F$ fixé par $\star$. Le corps $F_0$ est une extension totalement réelle de $\mathbb{Q}$, soit $d$ son degré. Faisons les hypothèses suivantes :

\begin{itemize}
\item $F=F_0$.
\item Pour tout plongement $F \to \mathbb{R}$, $B \otimes_{F} \mathbb{R} \simeq $M$_n(\mathbb{R})$, et l'involution $\star$ est donnée par $A \to A^t$.
\end{itemize}

Soit également $(U_{\mathbb{Q}},\langle,\rangle)$ un $B$-module hermitien non dégénéré. Soit $G$ le groupe des automorphismes du $B$-module hermitien $U_{\mathbb{Q}}$ ; pour toute $\mathbb{Q}$-algèbre $R$, on a donc
$$G(R) = \left\{ (g,c) \in GL_{B} (U_{\mathbb{Q}} \otimes_{\mathbb{Q}} R) \times R^* , \langle gx,gy \rangle =c \langle x,y\rangle \text{ pour tout } x,y \in U_{\mathbb{Q}} \otimes_{\mathbb{Q}} R \right\} $$

Soient $\tau_1, \dots, \tau_d$ les plongements de $F$ dans $\mathbb{R}$,et $B_i=B \otimes_{F,\tau_i} \mathbb{R} \simeq $M$_n (\mathbb{R})$. Alors $G_\mathbb{R}$ est isomorphe à
$$\text{G} \left( \prod_{i=1}^d \text{Sp}_{2 g} \right)$$
où $g = \frac{1}{2nd} $dim$_\mathbb{Q} U_\mathbb{Q}$. \\
Donnons-nous également un morphisme de $\mathbb{R}$-algèbres $h : \mathbb{C} \to $End$_B U_\mathbb{R}$ tel que $\langle h(z)v,w\rangle ~=~\langle v,h(\overline{z})w\rangle$ et $(v,w) \to \langle v,h(i)w\rangle$ est définie positive. Ce morphisme définit donc une structure complexe sur $U_\mathbb{R}$ : soit $U^{1,0}_{\mathbb{R}}$ le sous-espace de $U_\mathbb{R}$ pour lequel $h(z)$ agit par la multiplication par $z$. \\
On a alors $U^{1,0}_{\mathbb{R}} \simeq \prod_{i=1}^d (\mathbb{R}^n)^g$ en tant que $B \otimes_{\mathbb{Q}} \mathbb{R} \simeq \oplus_{i=1}^d $M$_n( \mathbb{R})$-module. \\
Soient également un ordre $O_B$ de $B$ stable par par $\star$, et un réseau $U$ de $U_\mathbb{Q}$ tel que l'accouplement $\langle,\rangle$ restreint à $U\times U$ soit à valeurs dans $\mathbb{Z}$. Nous ferons également les hypothèses suivantes : 
\begin{itemize}
\item $B \otimes_{\mathbb{Q}} \mathbb{Q}_p$ est isomorphe à un produit d'algèbres de matrices à coefficients dans une extension non ramifiée de $\mathbb{Q}_p$.
\item $O_B$ est un ordre maximal en $p$.
\item L'accouplement $U \times U \to \mathbb{Z}$ est parfait en $p$.
\end{itemize}
$ $\\
Notons $\mathbb{Z}_{(p)}$ le localisé de $\mathbb{Z}$ en $p$ ; $O_B$ est un $\mathbb{Z}_{(p)}$-module libre. Soit $e_1, \dots, e_t$ une base de ce module, et 
$$\text{det}_{U^{1,0}} = f(X_1, \dots, X_t) = \det (X_1 \alpha_1 + \dots + X_t \alpha_t ;U^{1,0}_{\mathbb{C}} \otimes_{\mathbb{C}} \mathbb{C} [X_1, \dots, X_t])$$
On montre ($\cite{Ko}$) que $f$ est un polynôme à coefficients algébriques. Le corps de nombres $E$ engendré par ses coefficients est appelé corps réflexe, et est égal à $\mathbb{Q}$ dans le cas $(C)$. \\
De plus, d'après les hypothèses précédentes, $p$ est non ramifié dans $F$. Soit $h$ le nombre d'idéaux premiers de $F$ au-dessus de $p$, et $d_i$ le degré résiduel de chacune de ces places. Alors $O_B \otimes~\mathbb{Z}_p \simeq~\prod_{i=1}^{h} $M$_n(\mathbb{Z}_{p^{d_i}})$, où $\mathbb{Z}_{p^{d_i}}$ est l'anneau des entiers de l'unique extension non ramifiée de degré $d_i$ de $\mathbb{Q}_p$.

\subsection{Variété de Shimura} \label{defshi}

Définissons maintenant la variété de Shimura PEL de type (C) associée à $G$. Soit $N \geq 3$ un entier premier à $p$, et $K$ une extension finie de $\mathbb{Q}_p$ contenant tous les plongements $F \to \overline{\mathbb{Q}_p}$. On notera $O_K$ l'anneau des entiers de $K$. 

\begin{defi}
Soit $X$ sur Spec $O_K$ l'espace de modules dont les $S$-points sont les classes d'isomorphismes des $(A,\lambda,\iota,\eta)$ où
\begin{itemize}
\item $A \to S$ est un schéma abélien
\item $\lambda : A \to A^t$ est une polarisation de degré premier à $p$.
\item $\iota : O_B \to $End $A$ est compatible avec les involutions $\star$ et de Rosati, et les polynômes $\det_{U^{1,0}}$ et $\det_{Lie (A)}$ sont égaux.
\item $\eta : A[N] \to U/NU$ est une similitude symplectique $O_B$-linéaire, qui se relève localement pour la topologie étale en une similitude symplectique $O_B$-linéaire
$$H_1 (A,\mathbb{A}_f^p) \to U \otimes_{\mathbb{Z}} \mathbb{A}_f^p$$
\end{itemize}
\end{defi}

\begin{prop}
L'espace $X$ est un schéma quasi-projectif sur $O_K$.
\end{prop}

La condition du déterminant est explicite ; notons $St$ est le $O_B \otimes_{\mathbb{Z}} \mathbb{Z}_p$-module défini par
$$St = \bigoplus_{i=1}^h (\mathbb{Z}_{p^{d_i}}^n)^g$$
où l'action de $O_B \otimes~\mathbb{Z}_p \simeq~\prod_{i=1}^{h} $M$_n(\mathbb{Z}_{p^{d_i}})$ est l'action standard sur chacun des facteurs. Alors la condition du déterminant est équivalente au fait que Lie$(A)$ soit isomorphe localement pour la topologie de Zariski à $St \otimes_{\mathbb{Z}_p} \mathcal{O}_S$ comme $O_B \otimes_{\mathbb{Z}} \mathcal{O}_S$-module. On voit donc en particulier que le schéma abélien est de dimension $ndg$.\\
Nous allons maintenant définir une structure de niveau Iwahorique sur $X$. \\
Soit $\pi_1, \dots, \pi_{h}$ les idéaux premiers de $F$ au-dessus de $p$. Si $A \to S$ est un schéma abélien, on a donc
$$A[p^\infty] = \oplus_{i=1}^{h} A[\pi_i^\infty]$$
De plus, les groupes de Barsotti-Tate $A[\pi_i^\infty]$ sont principalement polarisés de dimension $nd_i g$, et muni d'une action M$_n(\mathbb{Z}_{p^{d_i}})$. 

\begin{defi}
Soit $X_{Iw}$ l'espace de modules sur $\mathbb{Z}_p$ dont les $S$-points sont les $(A,\lambda,\iota,\eta,H_{i,j})$ où $(A,\lambda,\iota,\eta) \in X(S)$ et $0=H_{i,0} \subset H_{i,1} \subset \dots \subset H_{i,g}$ est un drapeau de sous-groupes finis et plats, stables par $O_B$, et totalement isotropes de $A[\pi_i]$, chaque $H_{i,j}$ étant de hauteur $nd_i j$, pour tout $1\leq i \leq h$.
\end{defi}

Nous noterons $X_{rig}$ et $X_{Iw,rig}$ les espaces rigides associés respectivement à $X$ et $X_{Iw}$.

\subsection{Formes modulaires}

Soit $A$ le schéma abélien universel sur $X$, et soit $e^* \Omega_{A/X}^1$ le faisceau conormal relatif à la section unité de $A$ ; il est localement pour la topologie de Zariski isomorphe à $St \otimes_{\mathbb{Z}_p} \mathcal{O}_X$ comme $O_B \otimes_{\mathbb{Z}} \mathcal{O}_X$-module, où on rappelle que
$$St = \oplus_{i=1}^{h} (\mathbb{Z}_{p^{d_i}}^n )^g $$
Soit $\mathcal{T} = $Isom$_{O_B \otimes \mathcal{O}_X} (St \otimes_{\mathbb{Z}_p} \mathcal{O}_X, e^* \Omega_{A/X}^1)$. C'est un torseur sur $X$ sous le groupe
$$M=\left ( \prod_{i=1}^{h} Res_{\mathbb{Z}_{p^{d_i}} / \mathbb{Z}_p} GL_g \right ) \times_{\mathbb{Z}_p} O_K$$

Soit $B_M$ le Borel supérieur de $M$, $U_M$ son radical unipotent, et $T_M$ son tore maximal. Soit $X(T_M)$ le groupe des caractères de $T_M$, et $X(T_M)^+$ le cône des poids dominants pour $B_M$. Si $\kappa \in X(T_M)^+$, on note $\kappa'=- w_0 \kappa \in X(T_M)^+$, où $w_0$ est l'élément le plus long du groupe de Weyl de $M$ relativement à $T_M$. \\
Soit $\phi : \mathcal{T} \to X$ le morphisme de projection.

\begin{defi}
Soit $\kappa \in X(T_M)^+$. Le faisceau des formes modulaires de poids $\kappa$ est $\omega^\kappa =~\phi_* O_\mathcal{T}[\kappa']$, où $\phi_* O_\mathcal{T}[\kappa']$ est le sous-faisceau de $\phi_* O_\mathcal{T}$ où $B_M=T_M U_M$ agit par $\kappa$ sur $T_M$ et trivialement sur $U_M$.
\end{defi} 

Une forme modulaire de poids $\kappa$ sur $X$ est donc une section globale de $\omega^\kappa$, soit un élément de $H^0(X , \omega^\kappa)$. En utilisant la projection $X_{Iw} \to X$, on définit de même le faisceau $\omega^\kappa$ sur $X_{Iw}$, ainsi que les formes modulaires sur $X_{Iw}$. 

\begin{rema}
Par équivalence de Morita, la catégorie des $M_n(A)$-modules et celle des $A$-modules sont équivalentes, pour tout anneau $A$. L'équivalence de catégorie est explicite : à un $A$-module $M$, on associe $M^n$, qui est bien muni d'une action de $M_n(A)$ ; réciproquement, à un $M_n(A)$-module $N$, on associe le $A$-module $E_{1,1} N$, où $E_{1,1}$ est la matrice avec un seul coefficient non nul en position $(1,1)$ égal à $1$. \\
De cette manière, puisque $O_B \otimes_{\mathbb{Z}} \mathbb{Z}_p = \prod_{i=1}^h M_n(\mathbb{Z}_{p^{d_i}})$, et que le faisceau $\omega_A$ est isomorphe à $St \otimes \mathcal{O}_{X}$ comme $O_B \otimes_{\mathbb{Z}} \mathcal{O}_{X}$-module, l'équivalence de Morita associe à $\omega_A$ le faisceau de $(\prod_{i=1}^h \mathbb{Z}_{p^{d_i}}) \otimes_{\mathbb{Z}} \mathcal{O}_{X}$-modules défini par $E \cdot \omega_A$, où $E$ est la projection défini par $(E_{1,1})_i \in \prod_{i=1}^h M_n(\mathbb{Z}_{p^{d_i}})$. Ce faisceau est isomorphe à $(\oplus_{i=1}^{h} \mathbb{Z}_{p^{d_i}}^g) \otimes_{\mathbb{Z}} \mathcal{O}_{X}$ comme $(\prod_{i=1}^h \mathbb{Z}_{p^{d_i}}) \otimes_{\mathbb{Z}} \mathcal{O}_{X}$-module.
\end{rema}

En utilisant la remarque précédente, on voit qu'une définition équivalente du torseur $\mathcal{T}$ est
$$ \mathcal{T} = \prod_{i=1}^h \text{Isom}_{\mathbb{Z}_{p^{d_i}} \otimes \mathcal{O}_X} (\mathbb{Z}_{p^{d_i}}^g \otimes_{\mathbb{Z}_p} \mathcal{O}_X, E_i \cdot e^* \Omega_{A/X}^1)  $$
où $E_i$ est l'élément de $\prod_{j=1}^h M_n(\mathbb{Z}_{p^{d_j}})$ dont toutes les coordonnées sont nulles, sauf la $i$-ième égal à la matrice $E_{1,1}$. Nous utiliserons très souvent l'équivalence de Morita, ce qui nous permettra de se ramener au cas où $n=1$ (i.e. au cas où $B$ est simple). 

\begin{rema}
Le poids $\kappa$ d'une forme modulaire est une famille d'entiers
$$\prod_{i=1}^{h}  \prod_{j=1}^{d_i} (k_{1,j,i} \geq \dots \geq k_{g,j,i} ) $$
\end{rema}

Comme dans le cas précédent, nous n'avons pas à nous préoccuper des pointes pour la définition des formes modulaires, car il existe des modèles entiers des compactifications toroïdales.

\begin{theo}[$\cite{P-S 1}$ partie $6$] \label{toro}
Il existe une compactification toroïdale $\overline{X}_{Iw}$ de $X_{Iw}$ définie sur $O_K$, dépendant d'un choix combinatoire. Le schéma $\overline{X}_{Iw}$ est propre sur $O_K$, et le faisceau $\omega^\kappa$ s'étend à $\overline{X}_{Iw}$. De plus, on a le principe de Koecher algébrique et rigide, c'est-à-dire $H^0(\overline{X}_{Iw}, \omega^\kappa) = H^0 (X_{Iw}, \omega^\kappa)$ et $H^0(\overline{X}_{Iw,rig}, \omega^\kappa) = H^0 (X_{Iw,rig}, \omega^\kappa)$ où $\overline{X}_{Iw,rig}$ désigne l'espace rigide associé à $\overline{X}_{Iw}$.
\end{theo}

Définissons maintenant les fonctions degrés.

\begin{defi}
Soit $i$ un entier entre $1$ et $h$. On définit la fonction $Deg_i : X_{Iw,rig} \to~[0,d_i g]$ par $Deg_i((A,\lambda,\iota,\eta,H_{j,k})) = 1/n \deg H_{i,g}$. On définit également la fonction\\
$Deg : X_{Iw,rig} \to~\prod_{i=1}^h [0,d_i g]$ par $x \to (Deg_i(x))$.
\end{defi}

\begin{rema}
Le fait de diviser par $n$ est lié également à l'action de l'algèbre de matrices. En effet, le schéma en groupe $H_{i,g}$ est muni d'une action de M$_n(\mathbb{Z}_{p^{d_i}})$, et donc est isomorphe à $n$ copies de $E_{1,1} H_{i,g}$. La quantité pertinente à étudier n'est donc pas le degré de $H_{i,g}$, mais celui de $E_{1,1} H_{i,g}$ qui est de hauteur $d_i g$. C'est pourquoi nous avons défini la fonction $Deg_i$ comme le degré de $H_{i,g}$ divisé par $n$, qui est égal au degré de $E_{1,1} H_{i,g}$.
\end{rema}
$ $\\
Soit $X_{Iw,rig}^{mult}$ le lieu ordinaire-multiplicatif de $X_{Iw,rig}$ ; il est égal par définition à \\
$Deg^{-1} (\{d_1 g \} \times~\dots \times~\{d_h g \})$. Nous pouvons maintenant définir les formes surconvergentes sur pour $X$.

\begin{defi}
L'ensemble des formes modulaires surconvergentes est défini par
$$H^0(X_{Iw,rig},\omega^\kappa)^\dagger := \text{colim}_\mathcal{V} H^0 (\mathcal{V},\omega^\kappa)$$
où la colimite est prise sur les voisinages stricts $\mathcal{V}$ de $X_{Iw,rig}^{mult}$ dans $X_{Iw,rig}$.
\end{defi}

\subsection{Opérateurs de Hecke}

Soit $1 \leq i \leq h$. Soit $C_i$ l'espaces des modules sur $K$ dont les $S$-points sont les $(A,\lambda,\iota,\eta,H_{j,k},L)$ avec $(A,\lambda,\iota,\eta,H_{j,k}) \in X_{Iw}(S)$ et $L$ un sous-groupe fini et plat de $A[\pi_i]$, stable par $O_B$, totalement isotrope et supplémentaire de $H_{i,g}$ dans $A[\pi_i]$. \\
Nous avons deux morphismes $p_1, p_2 : C_i \to X_{Iw,K}=X_{Iw} \times K$ : $p_1$ est l'oubli de $L$, et $p_2$ est le quotient par $L$. Il y a a priori une ambiguïté pour définir la projection $p_2$, car il faut préciser quelle polarisation prendre pour le schéma abélien $A/L$. Fixons un élément $x_i$ totalement positif de $O_{F}$, de valuation $\pi_i$-adique $1$, et de valuation $\pi_j$-adique $0$ si $j \neq i$. Alors on définit la polarisation $\lambda'$ sur $A/L$ comme la polarisation descendue $x_i \cdot \lambda$. C'est bien une polarisation de degré premier à $p$. Remarquons que dans le cas où il n'y a qu'une seule place au-dessus de $p$, on peut prendre $x_i = p$ (c'est ce qui était fait dans les paragraphes précédents). Cela prouve que l'opérateur $U_p$ est défini canoniquement, mais qu'en général, les opérateurs géométriques $U_{p,i}$, associés à la place $\pi_i$, que nous allons définir ne sont pas canoniques. \\
Soit $C_i^{an}$ l'analytifié de $C_i$, et $C_{i,rig} = p_1^{-1} (X_{Iw,rig})$. Nous noterons encore $p_1,p_2$ les morphismes induits $C_{i,rig} \to X_{Iw,rig}$. L'opérateur de Hecke géométrique est défini par $U_{p,i} (S) = p_2(p_1^{-1}(S))$ pour toute partie $S$ de $X_{Iw,rig}$. \\
Notons $\pi : A \to A/L$ l'isogénie universelle au-dessus de $C_i$. Celle-ci induit un isomorphisme $\pi^* : \omega_{(A/L)/X} \to \omega_{A/X}$, et donc un morphisme $\pi^* (\kappa) : p_2^* \omega^\kappa \to~p_1^* \omega^\kappa$. Pour tout ouvert $\mathcal{U}$ de $X_{Iw,rig}$, nous pouvons donc former le morphisme composé
\begin{displaymath}
\widetilde{U}_{p,i} :   H^0(U_p(\mathcal{U}),\omega^\kappa) \to H^0 ( p_1^{-1} (\mathcal{U}), p_2^* \omega^\kappa) \overset{\pi^*(\kappa)}{\to} H^0(p_1^{-1}(\mathcal{U}) , p_1^* \omega^\kappa) \overset{Tr_{p_1}}{\to}  H^0(\mathcal{U},\omega^\kappa)
\end{displaymath}

\begin{defi}
L'opérateur de Hecke agissant sur les formes modulaires est alors défini par $U_{p,i} = \frac{1}{p^{n_i}} \widetilde{U}_{p,i}$ avec $n_i=\frac{d_i g(g+1)}{2}$. 
\end{defi}

\begin{rema} \label{non_canonique}
Puisque la définition de la projection $p_2$ nécessite le choix d'un élément $x_i$, l'opérateur de Hecke n'est pas défini canoniquement. Il est cependant canonique pour l'action sur les formes modulaires, si on se restreint aux formes invariantes pour l'action d'un certain groupe. Puisque ce problème ne crée pas de difficulté pour nous, nous n'entrons pas dans les détails. Remarquons enfin que les opérateurs $U_{p,i}$ commutent entre eux. En effet, l'opérateur $U_{p,i} U_{p,j}$ est obtenu à partir de l'espace des $(A,\lambda,\iota,\eta,H_{j,k},L)$, où $L=L_1 \oplus L_2$, avec $L_1 \subset A[\pi_i]$ et $L_2 \subset A[\pi_j]$ comme précédemment. La polarisation sur le schéma abélien $A/L$ est alors la polarisation descendue $(x_i x_j) \cdot \lambda$.
\end{rema}

L'opérateur $U_{p,i}$ se comporte bien avec les fonctions degrés.

\begin{prop}
Soit $x \in X_{Iw,rig}$, et $y \in U_{p,i}(x)$. Alors
\begin{itemize}
\item Si $j \neq i$, alors $Deg_j(y) = Deg_j(x)$.
\item $Deg_i (y) \geq Deg_i(x)$.
\item Si $Deg_i(x) = Deg_i(y)$, alors $Deg_i(x) \in \mathbb{Z}$.
\end{itemize}
\end{prop}

\begin{proof}
Supposons que $x$ corresponde à un couple $(A,\lambda,\iota,\eta,H_{i,j})$ défini sur une extension $K_1$ de $K$, et que $y$ corresponde à un sous-groupe $L$ de $A[\pi_i]$. Alors, pour tout $j$, $Deg_j(y) = 1/n \deg(H_{j,g}')$ où $H_{j,g}'$ est l'image de $H_{j,g}$ dans $A/L$. Si $j \neq i$, alors $H_{j,g}$ et $L$ sont en somme directe dans $A[p]$, ce qui implique que $Deg_j(y) = Deg_j(x)$. Le deuxième point résulte du fait que le morphisme $H_{i,g} \to H_{i,g}'$ est un isomorphisme en fibre générique et des propriétés de la fonction degré. \\
Supposons maintenant que $Deg_i(x) = Deg_i(y)$. Alors, d'après les propriétés de la fonction degré, cela implique que $H_{i,g}$ et $L$ sont en somme directe dans $A[\pi_i]$, soit que $A[\pi_i] =~H_{i,g} \oplus~L$. En appliquant le projecteur $E_{1,1}$, on obtient que $E_{1,1} A[\pi_i] = E_{1,1}H_{i,g} \oplus~E_{1,1} L$. Or $E_{1,1} A [\pi_i]$ est un Barsotti-Tate tronqué d'échelon $1$. Cela implique que $E_{1,1} H_{i,g}$ et $E_{1,1} L$ le sont également, et en particulier leurs degrés sont entiers.
\end{proof}

L'opérateur $U_{p,i}$ augmente donc la $i$-ième fonction degré, et laisse les autres inchangées. De plus, il augmente strictement la fonction $Deg_i$, sauf aux points de degrés entiers. Nous avons comme précédemment un résultat de contraction.

\begin{prop} \label{augmente_C}
Soit $r$ un entier compris entre $0$ et $d_ig -1$. Soit $0 < \lambda < \mu < 1$ des réels. Alors, il existe un entier $N$ tel que
$$U_{p,i}^N (Deg_i^{-1}([r + \lambda,d_i g])) \subset Deg_i^{-1}([r + \mu,d_i g])$$
\end{prop}

Nous avons donc défini $h$ opérateurs agissant sur les formes modulaires. Remarquons également que puisque ces opérateurs augmentent le degré, ils agissent sur les formes surconvergentes. \\
Enfin, nous pouvons également décomposer les opérateurs de Hecke suivant le nombre de bons ou mauvais supplémentaires. Pour simplifier les notations, nous énonçons le théorème pour $U_{p,1}$. Soit $\alpha$ un rationnel avec $0< \alpha <1$, $I_2, \dots, I_h$ des intervalles compacts avec $I_k \subset [0,d_k g]$. On pose $\mathcal{U} = Deg^{-1} ([0,d_1g - 1 +\alpha] \times I_2 \times \dots \times I_h)$.

\begin{theo}  \label{bigtheo_C}
Soit $N \geq 1$ et $\beta$ un rationnel avec $0<\beta<1$. Il existe une bonne suite d'ouverts $(\mathcal{U}_i (N))_{i \in S_N}$ de $\mathcal{U}$ , tels que pour tout $i\geq 0$, on peut décomposer la correspondance $U_{p,1}^N$ sur $\mathcal{U}_{i}(N) \backslash \mathcal{U}_{i+1}(N)$ en 
$$ U_{p,1}^N = \left ( \coprod_{k=0}^{N-1} U_{p,1}^{N-1-k} \circ T_k  \right ) \coprod T_N$$
avec $T_0 = U_{p,i,N}^{good}$, pour $0 < k < N$
$$T_k = \coprod_{i_1 \in S_{N-1}, \dots, i_k \in S_{N-k}}  U_{p,1,i_k,N}^{good} U_{p,1,i_{k-1},i_k,N}^{bad} \dots U_{p,1,i,i_1,N}^{bad}$$
et
$$T_N = \coprod_{i_1 \in S_{N-1}, \dots, i_{N-1} \in S_1} U_{p,1,i_{N-1},N}^{bad} U_{p,1,i_{N-2},i_{N-1},N}^{bad} \dots U_{p,1,i,i_1,N}^{bad}$$  
avec
\begin{itemize}
\item les images des opérateurs $U_{p,1,j,N}^{good}$ ($j \in S_k$) sont incluses dans $Deg_1^{-1} ( ] d_1 g - 1 + \beta, d_1 g])$
\item les opérateurs $U_{p,1,i,j,N}^{bad}$ ($i \in S_k$, $j \in S_{k-1})$ et $U_{p,j,N}^{bad}$ ($j \in S_1$) sont obtenus en quotientant par un sous-groupe $L$ de degré supérieur ou égal à $n(1 - \beta)$, et ont donc leurs images incluses dans $Deg_1^{-1}([0, d_1g-1 + \beta])$.
\end{itemize}
Enfin, si $\beta'$ est un autre rationnel avec $\beta < \beta' <1$, et si $(\mathcal{U}_i'(N))$ est la bonne suite d'ouverts obtenue pour $\beta'$, alors $\mathcal{U}_i'(N)$ est un voisinage strict de $\mathcal{U}_i(N)$ pour tout $i$.
\end{theo}

Un point clé dans l'étape du prolongement analytique est la majoration de la norme des opérateurs $U_{p,1}^{bad}$. 

\begin{prop} \label{norm_C}
Soit $T$ un opérateur défini sur un ouvert $\mathcal{U}$, égal à $U_{p,1}$, $U_{p,1}^{good}$ ou $U_{p,1}^{bad}$. On suppose que cet opérateur ne fait intervenir que des supplémentaires génériques $L$ de $H_{g,1}$ avec deg $L \geq nc$, pour un certain $c \geq 0$. Alors
$$ \Vert T \Vert_\mathcal{U} \leq p^{n_1-c \inf_j k_{g,j,1}}$$
\end{prop}

\begin{proof}
Soient $x$ un point de $X_{Iw,rig}$, correspondant à un couple $(A,\lambda,\iota,\eta, H_{i,j})$ défini sur $O_{\overline{K}}$ et $L$ un supplémentaire générique de $H_{1,g}$. Alors le morphisme $\pi : A \to A/L$ donne une suite exacte de $O_{\overline{K}} \otimes_{\mathbb{Z}} O_{B}$-modules
$$0 \to \omega_{A/L} \overset{\pi^*}{\to} \omega_A \to \omega_L \to 0$$
On rappelle que $O_B \otimes_{\mathbb{Z}} \mathbb{Z}_p = \prod_{i=1}^{h} $M$_n(\mathbb{Z}_{p^{d_i}})$, et on note $E_i = (E_{1,1})_i$ l'élément dont toutes les coordonnées sont nulles, sauf la $i$-ième égale à la matrice $E_{1,1}$. En appliquant le projecteur $E_i$ à la suite précédente on obtient
$$0 \to E_i \omega_{A/L} \overset{\pi_i^*}{\to} E_i \omega_A \to E_i \omega_L \to 0$$
Puisque $L \subset A[\pi_1]$, $\pi_i^*$ est un isomorphisme si $i \neq 1$. De plus, 
$$v(\det \pi_1^*) = \deg E_1 L = 1/n \deg L \geq c$$ 
Le résultat est alors analogue à la démonstration du lemme $\ref{lemnorm}$.
\end{proof}

\subsection{Classicité}

Énonçons maintenant le théorème de classicité. Soit  $\kappa \in~X(T_M)^+$ ; l'élément $\kappa$ correspond donc à une famille d'entiers
$$\prod_{i=1}^{h}  \prod_{j=1}^{d_i} (k_{1,j,i} \geq \dots \geq k_{g,j,i} ) $$

\begin{theo} \label{theogen}
Soit $f$ une forme surconvergente de poids $\kappa \in X(T_M)^+$ sur $X_{Iw}$, propre pour la famille d'opérateurs de Hecke $U_{p,i}$. Supposons que les valeurs propres $(a_i)$ pour ces opérateurs soient non nulles, et que $\kappa$ vérifie les relations
$$ v(a_i) + \frac{d_i g(g+1)}{2} < \inf_{1 \leq j \leq d_i} k_{g,j,i} $$
pour tout $1\leq i \leq h$. Alors $f$ est classique.
\end{theo}
 
\begin{proof}
La méthode de démonstration permettant d'étendre le résultat du théorème $\ref{maintheo}$ à ce cas s'inspire des travaux de Sasaki ($\cite{Sa}$). \'Ecrivons les différentes étapes de la démonstration. \\
La forme modulaire surconvergente $f$ est sur un ouvert du type Deg$^{-1} (\prod_{1\leq i \leq h} [d_i g -~\varepsilon,d_i g])$ pour un certain $\varepsilon >0$. Nous allons étendre cette forme à $X_{Iw,rig}$ tout entier. Pour cela, nous allons étendre $f$ direction par direction, c'est-à-dire étendre $f$ à $Deg^{-1} ([0,d_1g] \times \prod_{2\leq i \leq h} [d_i g -~\varepsilon,d_i g]  )$. Nous utiliserons pour cela le fait que $f$ est propre pour $U_{p,1}$ et la relation vérifiée par la valeur propre $a_1$. En utilisant l'opérateur $U_{p,2}$, et en répétant le processus, nous allons prolonger $f$ à $Deg^{-1} ([0,d_1g] \times [0,d_2g] \times \prod_{2\leq i \leq h} [d_i g -~\varepsilon,d_i g]  )$, et ainsi de suite, jusqu'à prolonger $f$ à tout $X_{Iw,rig}$. Détaillons le prolongement dans la première direction. \\
\underline{\'Etape $1$ : } Nous étendons la forme modulaire $f$ à l'espace Deg$^{-1} ( ]d_1 g -1,d_1 g ] \times~\prod_{2\leq i \leq h} [d_i g -~\varepsilon, d_i g])$. Nous utilisons pour cela la formule $f=a_1^{-N} U_{p,1}^N f$, et la proposition $\ref{augmente_C}$. Cela permet bien d'étendre $f$ à Deg$^{-1} ( ]d_1 g - 1,d_1 g ] \times~\prod_{2\leq i \leq h} [d_i g -~\varepsilon, d_i g])$. \\
\underline{\'Etape $2$ : } Le théorème $\ref{bigtheo_C}$ permet de définir les séries de Kassaei sur 
$$\mathcal{U} : = Deg^{-1} ( [0,d_1 g - 1 + \alpha ] \times~\prod_{2\leq i \leq h} [d_i g -~\varepsilon, d_i g])$$
où $\alpha$ est un rationnel arbitrairement petit. Le fait que les séries de Kassaei vont converger est assurée par la proposition $\ref{norm_C}$ et la relation vérifiée par $a_1$. Cela permet donc d'étendre $f$ à $\mathcal{U}$. En recollant $f$ avec la forme définie sur $Deg^{-1} ( ]d_1 g - 1,d_1 g ] \times~\prod_{2\leq i \leq h} [d_i g -~\varepsilon, d_i g])$, on voit qu'on peut donc étendre $f$ à $Deg^{-1} ( [0,d_1 g ] \times~\prod_{2\leq i \leq h} [d_i g -~\varepsilon, d_i g])$. \\
$ $
En répétant ce processus, on peut donc étendre $f$ à $X_{Iw,rig}$, c'est-à-dire un élément de $H^0(X_{Iw,rig},\omega^\kappa)$. Le fait que $f$ est classique provient ensuite du principe de Koecher et de GAGA, en utilisant une compactification toroïdale de $X$ (voir le théorème $\ref{toro}$).
\end{proof}

\section{Cas des variétés de Shimura de type~(A)} \label{unitaire}

La méthode de prolongement analytique utilisée s'adapte à d'autres cas. Dans cette partie, nous nous intéressons aux variétés de Shimura de type (A).

\subsection{Données de Shimura}

Rappelons les données paramétrant les variétés de Shimura PEL de type (A) (voir $\cite{Ko}$). Soit $B$ une $\mathbb{Q}$-algèbre simple munie d'une involution positive $\star$. Soit $F$ le centre de $B$ et $F_0$ le sous-corps de $F$ fixé par $\star$. Le corps $F_0$ est une extension totalement réelle de $\mathbb{Q}$, soit $d$ sont degré. Faisons les hypothèses suivantes :

\begin{itemize}
\item $[F:F_0]=2$.
\item Pour tout plongement $F_0 \to \mathbb{R}$, $B \otimes_{F_0} \mathbb{R} \simeq $M$_n(\mathbb{C})$, et l'involution $\star$ est donnée par $A \to \overline{A}^t$.
\end{itemize}

Soit également $(U_{\mathbb{Q}},\langle,\rangle)$ un $B$-module hermitien non dégénéré. Soit $G$ le groupe des automorphismes du $B$-module hermitien $U_{\mathbb{Q}}$ ; pour toute $\mathbb{Q}$-algèbre $R$, on a donc
$$G(R) = \left\{ (g,c) \in GL_{B} (U_{\mathbb{Q}} \otimes_{\mathbb{Q}} R) \times R^* , \langle gx,gy \rangle =c \langle x,y\rangle \text{ pour tout } x,y \in U_{\mathbb{Q}} \otimes_{\mathbb{Q}} R \right\} $$

Soient $\tau_1, \dots, \tau_d$ les plongements de $F_0$ dans $\mathbb{R}$ ; soit également $\sigma_i$ et $\overline{\sigma_i}$ les deux plongements de $F$ dans $\mathbb{C}$ étendant $\tau_i$. Le choix de $\sigma_i$ donne un isomorphisme $F \otimes_{F_0} \mathbb{R} \simeq \mathbb{C}$. On a également $B_i=B \otimes_{F_0,\tau_i} \mathbb{R} \simeq $M$_n (\mathbb{C})$. Notons $U_i = U_\mathbb{Q} \otimes_{F_0,\tau_i} \mathbb{R}$. D'après l'équivalence de Morita, $U_i \simeq~\mathbb{C}^n \otimes W_i$, où $B_i$ agit sur le premier facteur et $W_i$ est un $\mathbb{C}$-espace vectoriel. La structure anti-hermitienne sur $U_i$ en induit une sur $W_i$, et on note $(a_i,b_i)$ sa signature.

\begin{rema}
Si on avait choisi l'isomorphisme $F \otimes_{F_0} \mathbb{R} \simeq \mathbb{C}$ donné par $\overline{\sigma_i}$, on aurait obtenu le couple $(b_i,a_i)$. Ainsi, on dispose pour chaque plongement $\tau_i$ d'un couple d'entiers, défini à permutation près, mais bien défini si on choisit un plongement de $F$ au-dessus de $\tau_i$.
\end{rema}

Alors $G_\mathbb{R}$ est isomorphe à
$$\text{G} \left( \prod_{i=1}^d \text{U}(a_i,b_i) \right)$$
où $a_i + b_i$ est indépendant de $i$ et vaut $\frac{1}{2nd} $dim$_\mathbb{Q} U_\mathbb{Q}$. Nous noterons cette quantité $a+b$. \\
Donnons-nous également un morphisme de $\mathbb{R}$-algèbres $h : \mathbb{C} \to $End$_B U_\mathbb{R}$ tel que $\langle h(z)v,w\rangle ~=~\langle v,h(\overline{z})w\rangle$ et $(v,w) \to \langle v,h(i)w\rangle$ est définie positive. Ce morphisme définit donc une structure complexe sur $U_\mathbb{R}$ : soit $U^{1,0}_{\mathbb{C}}$ le sous-espace de $U_\mathbb{C}$ pour lequel $h(z)$ agit par la multiplication par $z$. On a alors $U^{1,0}_{\mathbb{C}} \simeq \prod_{i=1}^d (\mathbb{C}^n)^{a_i} \oplus \overline{(\mathbb{C}^n)}^{b_i}$ en tant que $B \otimes_{\mathbb{Q}} \mathbb{R} \simeq \oplus_{i=1}^d $M$_n( \mathbb{C})$-module (l'action de M$_n(\mathbb{C})$ sur $(\mathbb{C}^n)^{a_i} \oplus \overline{(\mathbb{C}^n)}^{b_i}$ est l'action standard sur le premier facteur et l'action conjuguée sur le second) . \\
Soient également un ordre $O_B$ de $B$ stable par par $\star$, et un réseau $U$ de $U_\mathbb{Q}$ tel que l'accouplement $\langle,\rangle$ restreint à $U\times U$ soit à valeurs dans $\mathbb{Z}$. Nous ferons également les hypothèses suivantes : 
\begin{itemize}
\item $B \otimes_{\mathbb{Q}} \mathbb{Q}_p$ est isomorphe à un produit d'algèbres de matrices à coefficients dans une extension non ramifiée de $\mathbb{Q}_p$.
\item $O_B$ est un ordre maximal en $p$.
\item L'accouplement $U \times U \to \mathbb{Z}$ est parfait en $p$.
\end{itemize}
$ $\\
Soit $\mathbb{Z}_{(p)}$ le localisé de $\mathbb{Z}$ en $p$ ; $O_B$ est un $\mathbb{Z}_{(p)}$-module libre. Soit $e_1, \dots, e_t$ une base de ce module, et 
$$\text{det}_{U^{1,0}} = f(X_1, \dots, X_t) = \det (X_1 \alpha_1 + \dots + X_t \alpha_t ;U^{1,0}_{\mathbb{C}} \otimes_{\mathbb{C}} \mathbb{C} [X_1, \dots, X_t])$$
On montre ($\cite{Ko}$) que $f$ est un polynôme à coefficients algébriques. Le corps de nombres $E$ engendré par ses coefficients est appelé le corps réflexe. \\
De plus, d'après les hypothèses précédentes, $p$ est non ramifié dans $F_0$. Soient $\pi_1, \dots, \pi_h$ les idéaux de $F_0$ au-dessus de $p$, et $d_i$ le degré résiduel de chacune de ces places. Par hypothèse, $\pi_i$ est non ramifié dans $F$ ; nous sommes donc amenés à distinguer deux cas. 

\begin{itemize}
\item Nous dirons que $\pi_i$ est dans le cas $1$ si $\pi_i$ est totalement décomposé dans $F$. On note dans ce cas $\pi_i^+$ et $\pi_i^-$ les deux idéaux de $F$ au-dessus de $\pi_i$.
\item Nous dirons que $\pi_i$ est dans le cas $2$ si $\pi_i$ est inerte dans $F$.
\end{itemize}

Nous avons alors $O_B \otimes~\mathbb{Z}_p \simeq~\prod_{i=1}^{h} O_{B,i}$, avec

\begin{itemize}
\item si $\pi_i$ est dans le cas $1$, $O_{B,i} = $M$_n(\mathbb{Z}_{p^{d_i}}) \oplus $M$_n(\mathbb{Z}_{p^{d_i}})$, où $\mathbb{Z}_{p^{d_i}}$ est l'anneau des entiers de l'unique extension non ramifiée de degré $d_i$ de $\mathbb{Q}_p$.
\item si $\pi_i$ est dans le cas $2$, $O_{B,i} = $M$_n(\mathbb{Z}_{p^{2d_i}})$.
\end{itemize}

Soit $\Sigma $ l'ensemble des plongements de $F_0$ dans $\overline{\mathbb{Q}_p}$, et $\Sigma_i$ le sous-ensemble de $\Sigma$ formé des plongements envoyant $\pi_i$ dans l'idéal maximal de l'anneau des entiers de $\overline{\mathbb{Q}_p}$. Alors $\Sigma_i$ est de cardinal $d_i$, et $\Sigma$ est l'union disjointe des $\Sigma_i$. Si $\pi_i$ est dans le cas $1$, et si $ \sigma \in \Sigma_i$, il existe deux plongements de $F$ au-dessus de $\sigma$ : $\sigma^+$ et $\sigma^-$, ces plongements étant respectivement au-dessus de $\pi_i^+$ et $\pi_i^-$. Nous ordonnerons le couple $(a_\sigma, b_\sigma)$ de telle sorte que $a_\sigma$ soit associé à $\sigma^+$.

\subsection{Variété de Shimura}

Définissons maintenant la variété de Shimura PEL de type (A) associée à $G$. Soit $K$ une extension finie de $\mathbb{Q}_p$ contenant tous les plongements $F \to \overline{\mathbb{Q}_p}$, et $O_K$ son anneau des entiers. Soit $N \geq 3$ un entier premier à $p$. 

\begin{defi}
Soit $X$ l'espace de modules sur Spec $O_K$ dont les $S$-points sont les classes d'isomorphismes des $(A,\lambda,\iota,\eta)$ où
\begin{itemize}
\item $A \to S$ est un schéma abélien
\item $\lambda : A \to A^t$ est une polarisation de degré premier à $p$.
\item $\iota : O_B \to $End $A$ est compatible avec les involutions $\star$ et de Rosati, et les polynômes $\det_{U^{1,0}}$ et $\det_{Lie (A)}$ sont égaux.
\item $\eta : A[N] \to U/NU$ est une similitude symplectique $O_B$-linéaire, qui se relève localement pour la topologie étale en une similitude symplectique $O_B$-linéaire
$$H_1 (A,\mathbb{A}_f^p) \to U \otimes_{\mathbb{Z}} \mathbb{A}_f^p$$
\end{itemize}
\end{defi}

\begin{prop}
L'espace $X$ est un schéma quasi-projectif sur Spec $O_K$.
\end{prop}

\begin{rema}
Explicitons la condition du déterminant. Notons $St_{O_K}$ est le $O_B \otimes_{\mathbb{Z}} O_K$-module défini par $St_{O_K} = \oplus_{i=1}^h St_{O_K,i}$, où $St_{O_K,i}$ est le $O_{B,i}$-module défini par 
$$St_{O_K,i} = \bigoplus_{\sigma \in \Sigma_i} (O_K^n)^{a_\sigma} \oplus (O_K^n)^{b_\sigma}$$
où dans le cas $1$ l'action de $O_{B,i} = $M$_n(\mathbb{Z}_{p^{d_i}}) \oplus $M$_n(\mathbb{Z}_{p^{d_i}})$ est l'action standard sur chacun des facteurs, et dans le cas $2$ l'action de $O_{B,i} = $M$_n(\mathbb{Z}_{p^{2d_i}})$ est l'action standard, avec $\mathbb{Z}_{p^{2d_i}}$ qui agit par un certain plongement sur le premier facteur, et par son conjugué sur le second (le choix de ce plongement est imposé par l'ordre sur le couple $(a_\sigma, b_\sigma)$). Alors la condition du déterminant est équivalente au fait que Lie$(A)$ soit isomorphe localement pour la topologie de Zariski à $St_{O_K} \otimes_{O_K} \mathcal{O}_S$ comme $O_B \otimes_{\mathbb{Z}} \mathcal{O}_S$-module. On voit donc en particulier que le schéma abélien est de dimension $nd(a+b)$.
\end{rema}

Nous allons maintenant définir une structure de niveau Iwahorique sur $X$. Si $A \to S$ est un schéma abélien avec action de $O_B$, on a donc
$$A[p^\infty] = \oplus_{i=1}^{h} A[\pi_i^\infty]$$
Les groupes de Barsotti-Tate $A[\pi_i^\infty]$ sont principalement polarisés de dimension $nd_i (a+b)$, et muni d'une action $O_{B,i}$. De plus, si $\pi_i$ est dans le cas $1$, alors
$$A[p^\infty] = A[(\pi_i^+)^\infty] \oplus A[(\pi_i^-)^\infty]$$
De plus, les groupes $A[(\pi_i^+)^\infty]$ et $A[(\pi_i^-)^\infty]$ sont des Barsotti-Tate de hauteur $d_i(a+b)$ de dimensions respectives $d_i a_i$ et $d_i b_i$, et munis d'une action de M$_n(\mathbb{Z}_{p^{d_i}})$. Remarquons que ces deux groupes sont duaux l'un de l'autre (cela résulte de la compatibilité entre l'involution de Rosati et la conjugaison complexe). 

\begin{defi} 
Soit $X_{Iw}$ l'espace de modules sur $O_K$ dont les $S$-points sont les $(A,\lambda,\iota,\eta,H_{i,j})$ où $(A,\lambda,\iota,\eta) \in X(S)$ et
\begin{itemize}
\item si $\pi_i$ est dans le cas $1$, $0=H_{i,0} \subset H_{i,1} \subset \dots \subset H_{i,a+b}=A[\pi_i^+]$ est un drapeau de sous-groupes finis et plats de $A[\pi_i^+]$ stables par $O_B$, chaque $H_{i,j}$ étant de hauteur $nd_i j$.
\item si $\pi_i$ est dans le cas $2$, $0=H_{i,0} \subset H_{i,1} \subset \dots \subset H_{i,a+b}=A[\pi_i]$ est un drapeau de sous-groupes finis et plats de $A[\pi_i]$ stables par $O_B$, chaque $H_{i,j}$ étant de hauteur $2nd_i j$, avec $H_{i,a+b-j}$ égal à l'orthogonal de $H_{i,j}$.
\end{itemize}
\end{defi}

Nous noterons $X_{rig}$ et $X_{Iw,rig}$ les espaces rigides associés respectivement à $X$ et $X_{Iw}$.

\begin{rema}
Il pourrait sembler que choisir $A[(\pi_i^+)]$ (au lieu de $A[\pi_i^-]$) dans le cas $1$ rompe la symétrie. Il n'en est en fait rien, puisque ces deux groupes sont duaux l'un de l'autre : $A[\pi_i^+] \simeq A[\pi_i^-]^D$. Cette dualité induit donc un accouplement parfait 
$$\langle,\rangle : A[\pi_i^+] \times A[\pi_i^-] \to \mathbb{G}_m$$
Si $H$ est un sous-groupe de $A[\pi_i^+]$, on peut donc considérer l'orthogonal de $H$ pour cet accouplement, $H^\bot$, qui est un sous-groupe de $A[\pi_i^-]$. Un drapeau $(H_j)$ de $A[\pi_i^+]$ donne donc par orthogonalité un drapeau $(H_{a+b-j}^\bot)$ de $A[\pi_i^-]$. \\
Remarquons également que pour tout sous-groupe $H$ de $A[\pi_i^+]$, on dispose d'un diagramme commutatif

\begin{displaymath}
\xymatrix{
A[\pi_i^-] \ar[r] & A[\pi_i^+]^D \\
H^\bot \ar[r] \ar[u] & (A[\pi_i^+] / H)^D \ar[u]
}
\end{displaymath}
où les flèches horizontales sont des isomorphismes. D'où $H^\bot \simeq (A[\pi_i^+] / H)^D$. 
\end{rema}

\begin{rema}
Les schémas $X$ et $X_{Iw}$ sont en fait défini sur le corps réflexe $E$ ; cela résulte du fait que le $O_B \otimes_{\mathbb{Z}} O_K$-module $St_{O_K}$ est en fait défini sur $E$. Néanmoins, nous devrons nous placer sur $O_K$ pour définir les faisceaux des formes modulaires.
\end{rema}

Pour définir les formes modulaires surconvergentes, nous aurons besoin de faire l'hypothèse que le lieu ordinaire de la variété de Shimura est non vide. D'après un résultat de Wedhorn ($\cite{We}$), cela est équivalent au fait suivant.

\begin{hypo}
Nous supposerons que $p$ est totalement décomposé dans le corps réflexe $E$.
\end{hypo}

Cette hypothèse a les conséquences suivantes sur les couples d'entiers $(a_i,b_i)$.

\begin{prop}
Supposons que $\pi_i$ soit dans le cas $1$. Alors il existe un couple d'entiers $(a_i,b_i)$ tel que $(a_\sigma,b_\sigma) = (a_i,b_i)$ pour tout $\sigma \in \Sigma_i$. Si $\pi_i$ est dans le cas $2$, alors $a_\sigma =~b_\sigma =~(a+b)/2$.
\end{prop}

\begin{proof}
Supposons que $\pi_i$ soit dans le cas $1$. Par hypothèse, il existe un schéma abélien $A$ défini sur une extension finie de $O_K$ tel que le groupe $p$-divisible $A[\pi_i^\infty]$ soit ordinaire, c'est-à-dire extension d'un groupe multiplicatif par un groupe étale. On en déduit que $A[(\pi_i^+)^\infty]$ est également ordinaire. Or $A[\pi_i^+]$ est muni d'une action de $\mathbb{Z}_{p^{d_i}}$, donc d'après la partie $\ref{partial}$, on peut définir ses degrés partiels. Pour tout $\sigma \in \Sigma_i$, on a $\deg_\sigma A[\pi_i^+] = n a_\sigma$. Or le degré d'un groupe étale est nul, et si $G$ est un groupe multiplicatif muni d'une action de $\mathbb{Z}_{p^{d_i}}$, tous ses degrés partiels sont égaux. On en déduit que $a_\sigma$ ne dépend pas de $\sigma$. Il existe donc un entier $a_i$ tel que $a_\sigma = a_i$ pour tout $\sigma \in \Sigma_i$. Si $b_i = a+b-a_i$, on a alors $b_\sigma = a+b-a_\sigma = b_i$ pour tout $\sigma \in \Sigma_i$. \\
Si $\pi_i$ est dans le cas $2$, soit $A$ un schéma abélien défini sur une extension finie de $O_K$ avec $A[\pi_i^\infty]$ ordinaire. Alors $A[\pi_i]$ est muni d'une action de $\mathbb{Z}_{p^{2d_i}}$, et pour tout $\sigma \in \Sigma_i$, si $\sigma_1$ et $\sigma_2$ sont les deux plongements de $F$ au-dessus de $\sigma$, on a $\deg_{\sigma_1} A[\pi_i] = a_\sigma$ et $\deg_{\sigma_2} A[\pi_i] = b_\sigma$, si on suppose que l'entier $a_\sigma$ est associé à $\sigma_1$. Puisque tous les degrés partiels doivent être égaux, on en déduit le résultat.
\end{proof}

Si $\pi_i$ est dans le cas $2$, on notera $a_i=b_i=(a+b)/2$, de telle sorte que pour tout $\sigma \in \Sigma_i$, on a $a_\sigma=a_i$ et $b_\sigma = b_i$ quelque soit le cas.

\subsection{Formes modulaires} \label{defuni}

Soit $A$ le schéma abélien universel sur $X$, et soit $e^* \Omega_{A/X}^1$ le faisceau conormal relatif à la section unité de $A$. D'après ce qui précède, il est localement pour la topologie de Zariski isomorphe à $St \otimes_{\mathbb{Z}_p} \mathcal{O}_X$ comme $O_B \otimes \mathcal{O}_X$-~module, où $St=\oplus_{i=1}^h St_i$, et $St_i$ est le $O_{B,i}$-module défini par 
\begin{itemize}
\item $St_i = (\mathbb{Z}_{p^{d_i}}^n)^{a_i} \oplus (\mathbb{Z}_{p^{d_i}}^n)^{b_i}$ dans le cas $1$
\item $St_i = (\mathbb{Z}_{p^{2d_i}}^n)^{a_i}$ dans le cas $2$
\end{itemize}

Soit 
$$\mathcal{T} = \text{Isom}_{O_B \otimes \mathcal{O}_X} (St \otimes \mathcal{O}_X, e^* \Omega_{A/X}^1)$$
C'est un torseur sur $X$ sous le groupe
$$M=\left ( \prod_{i \in S_1} Res_{\mathbb{Z}_{p^{d_i}} / \mathbb{Z}_p} (GL_{a_i} \times GL_{b_i}) \times \prod_{i \in S_2} Res_{\mathbb{Z}_{p^{2d_i}} / \mathbb{Z}_p} GL_{a_i}  \right) \times_{\mathbb{Z}_p} O_K$$
où $S_j$ est l'ensemble des indices $i$ tels que $\pi_i$ est dans le cas $j$. \\
Soit $B_M$ le Borel supérieur de $M$, $U_M$ son radical unipotent, et $T_M$ son tore maximal. Soit $X(T_M)$ le groupe des caractères de $T_M$, et $X(T_M)^+$ le cône des poids dominants pour $B_M$. Si $\kappa \in X(T_M)^+$, on note $\kappa'=- w_0 \kappa \in X(T_M)^+$, où $w_0$ est l'élément le plus long du groupe de Weyl de $M$ relativement à $T_M$. \\
Soit $\phi : \mathcal{T} \to X$ le morphisme de projection.

\begin{defi}
Soit $\kappa \in X(T_M)^+$. Le faisceau des formes modulaires de poids $\kappa$ est $\omega^\kappa =~\phi_* O_\mathcal{T}[\kappa']$, où $\phi_* O_\mathcal{T}[\kappa']$ est le sous-faisceau de $\phi_* O_\mathcal{T}$ où $B_M=T_M U_M$ agit par $\kappa$ sur $T_M$ et trivialement sur $U_M$.
\end{defi} 

Une forme modulaire de poids $\kappa$ sur $X$ est donc une section globale de $\omega^\kappa$, soit un élément de $H^0(X , \omega^\kappa)$. En utilisant la projection $X_{Iw} \to X$, on définit de même le faisceau $\omega^\kappa$ sur $X_{Iw}$, ainsi que les formes modulaires sur $X_{Iw}$. \\

En utilisant l'équivalence de Morita, une définition équivalente du torseur $\mathcal{T}$ est
$$ \mathcal{T} = \prod_{i \in S_1} \text{Isom}_{\mathbb{Z}_{p^{d_i}} \otimes \mathcal{O}_X} ((\mathbb{Z}_{p^{d_i}}^{a_i} \oplus \mathbb{Z}_{p^{d_i}}^{b_i}) \otimes_{\mathbb{Z}_p} \mathcal{O}_X, E_i \cdot e^* \Omega_{A/X}^1) \times \prod_{i \in S_2} \text{Isom}_{\mathbb{Z}_{p^{2d_i}} \otimes \mathcal{O}_X} (\mathbb{Z}_{p^{2d_i}}^{(a+b)/2}  \otimes_{\mathbb{Z}_p} \mathcal{O}_X, E_i \cdot e^* \Omega_{A/X}^1) $$
où $E_i$ est l'élément de $\prod_{j=1}^h O_{B,j}$ dont toutes les coordonnées sont nulles, sauf la $i$-ième égal à la matrice $E_{1,1} \oplus E_{1,1}$ si $i$ appartient à $S_1$, et égal à la matrice $E_{1,1}$ si $i \in S_2$.  

\begin{rema}
Le poids $\kappa$ d'une forme modulaire est une famille d'entiers
$$\prod_{i=1}^{h}  \prod_{j=1}^{d_i} ((k_{1,j,i} \geq \dots \geq k_{a_i,j,i} ) , (l_{1,j,i} \geq \dots \geq l_{b_i,j,i} ) ) $$
\end{rema}

Comme dans les cas précédent, nous n'avons pas à nous préoccuper des pointes pour la définition des formes modulaires, car il existe des modèles entiers des compactifications toroïdales.

\begin{theo}[$\cite{P-S 1}$ partie $6$] \label{toro_A}
Il existe une compactification toroïdale $\overline{X}_{Iw}$ de $X_{Iw}$ définie sur $O_K$, dépendant d'un choix combinatoire. Le schéma $\overline{X}_{Iw}$ est propre sur $O_K$, et le faisceau $\omega^\kappa$ s'étend à $\overline{X}_{Iw}$. De plus, on a le principe de Koecher algébrique et rigide, c'est-à-dire $H^0(\overline{X}_{Iw}, \omega^\kappa) = H^0 (X_{Iw}, \omega^\kappa)$ et $H^0(\overline{X}_{Iw,rig}, \omega^\kappa) = H^0 (X_{Iw,rig}, \omega^\kappa)$ où $\overline{X}_{Iw,rig}$ désigne l'espace rigide associé à $\overline{X}_{Iw}$.
\end{theo}

Définissons maintenant les fonctions degrés.

\begin{defi}
Soit $i$ un entier entre $1$ et $h$. On définit la fonction $Deg_i : X_{Iw,rig} \to~[0,d_i a_i]$ par $Deg_i((A,\lambda,\iota,\eta,H_{j,k})) = 1/n \deg H_{i,a_i}$. On définit également la fonction $Deg : X_{Iw,rig} \to~\prod_{i=1}^h [0,d_i a_i]$ par $x \to (Deg_i(x))$.
\end{defi}

\begin{rema}
Ici encore, le fait de diviser par $n$ est lié à l'action de l'algèbre de matrices. 
\end{rema}
$ $\\
Soit $X_{Iw,rig}^{mult}$ le lieu ordinaire-multiplicatif de $X_{Iw,rig}$ ; il est égal par définition à 
$$Deg^{-1} (\{d_1 a_1 \} \times \dots \times \{d_h a_h \})$$
Nous pouvons maintenant définir les formes surconvergentes sur pour $X$.

\begin{defi}
L'ensemble des formes modulaires surconvergentes est défini par
$$H^0(X_{Iw,rig},\omega^\kappa)^\dagger := \text{colim}_\mathcal{V} H^0 (\mathcal{V},\omega^\kappa)$$
où la colimite est prise sur les voisinages stricts $\mathcal{V}$ de $X_{Iw,rig}^{mult}$ dans $X_{Iw,rig}$.
\end{defi}

\subsection{Opérateurs de Hecke}

Soit $1 \leq i \leq h$. Soit $C_i$ l'espaces des modules sur $K$ dont les $S$-points sont les $(A,\lambda,\iota,\eta,H_{j,k},L)$ avec $(A,\lambda,\iota,\eta,H_{j,k}) \in X_{Iw}(S)$ et 
\begin{itemize}
\item $L$ un sous-groupe fini et plat de $A[\pi_i^+]$, stable par $O_B$, et supplémentaire générique de $H_{i,a_i}$ dans $A[\pi_i^+]$ dans le cas $1$.
\item $L$ un sous-groupe fini et plat de $A[\pi_i]$, stable par $O_B$, totalement isotrope et supplémentaire générique de $H_{i,a_i}$ dans $A[\pi_i]$ dans le cas $2$.
\end{itemize}

Remarquons que dans le cas $1$ $A[\pi_i^-]$ est la somme directe sur $K$ de $H_{i,a_i}^\bot$ et de $L^\bot$, donc que $A[\pi_i]$ est la somme directe sur $K$ de $H_{i,a_i} \oplus H_{i,a_i}^\bot$ et de $L \oplus L^\bot$. \\ 
Nous avons deux morphismes $p_1, p_2 : C_i \to X_{Iw,K}=X_{Iw} \times K$ : $p_1$ est l'oubli de $L$, et $p_2$ est le quotient par $L_0$, où $L_0$ est égal à $L \oplus L^\bot$ dans le cas $1$ et à $L$ dans le cas $2$. Pour définir la projection $p_2$, nous devons choisir la polarisation sur le schéma abélien $A/L$. On fixe un élément totalement positif $x_i$ de $O_{F_0}$, de valuation $\pi_j$-adique $1$ si $j=i$ et $0$ sinon. On définit la polarisation sur $A/L$ comme la polarisation descendue $x_i \cdot \lambda$. Comme décrit dans la remarque $\ref{non_canonique}$, ce choix ne créera pas de difficulté. Nous noterons encore $p_1,p_2$ les morphismes induits $C_{i,rig} \to X_{Iw,rig}$. Notons également $\pi : A \to A/L_0$ l'isogénie universelle au-dessus de $C_i$. Celle-ci induit un isomorphisme $\pi^* : \omega_{(A/L_0)/X} \to \omega_{A/X}$, et donc un morphisme 
$\pi^* (\kappa) :~p_2^* \omega^\kappa \to~p_1^* \omega^\kappa$. Pour tout ouvert $\mathcal{U}$ de $X_{Iw,rig}$, nous pouvons donc former le morphisme composé
\begin{displaymath}
\widetilde{U}_{p,i} :   H^0(U_p(\mathcal{U}),\omega^\kappa) \to H^0 ( p_1^{-1} (\mathcal{U}), p_2^* \omega^\kappa) \overset{\pi^*(\kappa)}{\to} H^0(p_1^{-1}(\mathcal{U}) , p_1^* \omega^\kappa) \overset{Tr_{p_1}}{\to}  H^0(\mathcal{U},\omega^\kappa)
\end{displaymath}

\begin{defi}
L'opérateur de Hecke agissant sur les formes modulaires est défini par $U_{p,i}~=~\frac{1}{p^{n_i}} \widetilde{U}_{p,i}$ avec $n_i=d_i a_i b_i $. 
\end{defi}

Les propriétés vérifiées par les opérateurs $U_{p,i}$ sont les mêmes que dans les parties précédentes.

\begin{prop}
Soit $x \in X_{Iw,rig}$, et $y \in U_{p,i}(x)$. Alors
\begin{itemize}
\item Si $j \neq i$, alors $Deg_j(y) = Deg_j(x)$.
\item $Deg_i (y) \geq Deg_i(x)$.
\item Si $Deg_i(x) = Deg_i(y)$, alors $Deg_i(x) \in \mathbb{Z}$.
\end{itemize}
\end{prop}

\begin{prop} \label{augmente_A}
Soit $r$ un entier compris entre $0$ et $d_ia_i -1$. Soit $0 < \lambda < \mu < 1$ des réels. Alors, il existe un entier $N$ tel que
$$U_{p,i}^N (Deg_i^{-1}([r + \lambda,d_i a_i])) \subset Deg_i^{-1}([r + \mu,d_i a_i])$$
\end{prop}

Nous avons donc défini $h$ opérateurs agissant sur les formes modulaires. Remarquons également que puisque ces opérateurs augmentent le degré, ils agissent sur les formes surconvergentes. \\
Enfin, nous pouvons également décomposer les opérateurs de Hecke suivant le nombre de bons ou mauvais supplémentaires. Pour simplifier les notations, nous énonçons le théorème pour $U_{p,1}$. Soit $\alpha$ un rationnel avec $0< \alpha <1$, $I_2, \dots, I_h$ des intervalles compacts avec $I_k \subset [0,d_k a_k]$. On pose $\mathcal{U} = Deg^{-1} ([0,d_1a_1 - 1 +\alpha] \times I_2 \times \dots \times I_h)$.

\begin{theo}  \label{bigtheo_A}
Soit $N \geq 1$ et $\beta$ un rationnel avec $0<\beta<1$. Il existe une bonne suite d'ouverts $(\mathcal{U}_i (N))_{i \in S_N}$ de $\mathcal{U}$ , tels que pour tout $i\geq 0$, on peut décomposer la correspondance $U_{p,1}^N$ sur $\mathcal{U}_{i}(N) \backslash \mathcal{U}_{i+1}(N)$ en 
$$ U_{p,1}^N = \left ( \coprod_{k=0}^{N-1} U_{p,1}^{N-1-k} \circ T_k  \right ) \coprod T_N$$
avec $T_0 = U_{p,i,N}^{good}$, pour $0 < k < N$
$$T_k = \coprod_{i_1 \in S_{N-1}, \dots, i_k \in S_{N-k}}  U_{p,1,i_k,N}^{good} U_{p,1,i_{k-1},i_k,N}^{bad} \dots U_{p,1,i,i_1,N}^{bad}$$
et
$$T_N = \coprod_{i_1 \in S_{N-1}, \dots, i_{N-1} \in S_1} U_{p,1,i_{N-1},N}^{bad} U_{p,1,i_{N-2},i_{N-1},N}^{bad} \dots U_{p,1,i,i_1,N}^{bad}$$  
avec
\begin{itemize}
\item les images des opérateurs $U_{p,1,j,N}^{good}$ ($j \in S_k$) sont incluses dans $Deg_1^{-1} ( ] d_1 a_1 - 1 +~\beta, d_1 a_1])$
\item les opérateurs $U_{p,1,i,j,N}^{bad}$ ($i \in S_k$, $j \in S_{k-1})$ et $U_{p,j,N}^{bad}$ ($j \in S_1$) sont obtenus en quotientant par un sous-groupe $L$ de degré supérieur ou égal à $n(1 - \beta)$, et ont donc leurs images incluses dans $Deg_1^{-1}([0, d_1a_1-1 + \beta])$.
\end{itemize}
Enfin, si $\beta'$ est un autre rationnel avec $\beta < \beta' <1$, et si $(\mathcal{U}_i'(N))$ est la bonne suite d'ouverts obtenue pour $\beta'$, alors $\mathcal{U}_i'(N)$ est un voisinage strict de $\mathcal{U}_i(N)$ pour tout $i$.
\end{theo}

Nous avons également une majoration de la norme des opérateurs $U_{p,1}^{bad}$. 

\begin{prop} \label{norm_A}
Soit $T$ un opérateur défini sur un ouvert $\mathcal{U}$, égal à $U_{p,1}$, $U_{p,1}^{good}$ ou $U_{p,1}^{bad}$. On suppose que cet opérateur ne fait intervenir que des supplémentaires génériques $L$ de $H_{g,1}$ avec deg $L \geq nc$, pour un certain $c \geq 0$. Alors
$$ \Vert T \Vert_\mathcal{U} \leq p^{n_1-c (\inf_j (k_{a_1,j,1} + l_{b_1,j,1}))}$$
\end{prop}

\begin{proof}
Soient $x$ un point de $X_{Iw,rig}$, correspondant à un couple $(A,\lambda,\iota,\eta, H_{i,j})$ défini sur $O_{\overline{K}}$ et $L$ un supplémentaire générique de $H_{1,a_1}$ comme dans le définition des opérateurs de Hecke. Si $\pi_1$ est dans le cas $1$, alors $L$ est un supplémentaire générique de $H_{1,a_1}$ dans $A[\pi_1^+]$, et on note $L_0 = L \oplus L^\bot$. Si $\pi_1$ est dans le cas $2$, $L$ est un supplémentaire générique de $H_{1,a_1}$ dans $A[\pi_1]$ et on note $L_0=L$. Alors le morphisme $\pi : A \to A/{L_0}$ donne une suite exacte de $O_{\overline{K}} \otimes_{\mathbb{Z}} O_{B}$-modules
$$0 \to \omega_{A/L_0} \overset{\pi^*}{\to} \omega_A \to \omega_{L_0} \to 0$$
On rappelle que $O_B \otimes_{\mathbb{Z}} \mathbb{Z}_p = \prod_{i=1}^{h} O_{B,i}$, où $O_{B,i}$ est la complétion de $O_B$ en $\pi_i$ ; $O_{B,i}$ est donc égal respectivement à M$_n(\mathbb{Z}_{p^{d_i}}) \oplus $M$_n(\mathbb{Z}_{p^{d_i}})$ ou M$_n (\mathbb{Z}_{p^{2d_i}})$ si $\pi_i$ est dans le cas $1$ ou $2$. Le module $\omega_{A}$ se décompose sous l'action de $O_B$ en $\omega_A = \oplus_{i=1}^h \omega_{A,i}$, et de même pour $\omega_{A/L_0}$ et $\omega_{L_0}$. On a donc des suites exactes
$$0 \to \omega_{A/{L_0},i} \overset{\pi_i^*}{\to} \omega_{A,i} \to \omega_{L_0,i} \to 0$$
Puisque $L \subset A[\pi_1]$, $\pi_i^*$ est un isomorphisme si $i \neq 1$. Pour étudier le morphisme $\pi_1^*$, nous allons distinguer suivant le fait que $\pi_1$ soit dans le cas $1$ ou $2$. Supposons que $\pi_1$ soit dans le cas $1$. Alors $O_{B,1} = $M$_n(\mathbb{Z}_{p^{d_1}}) \oplus $M$_n(\mathbb{Z}_{p^{d_1}})$, et on note $E_1^+$ et $E_1^-$ les projecteurs de $O_{B,1}$ égaux respectivement à $(E_{1,1},0)$ et $(0,E_{1,1})$. On a donc des suites exactes de $O_{\overline{K}} \otimes_{\mathbb{Z}_p} \mathbb{Z}_{p^{d_1}}$-modules
$$0 \to E_1^+ \omega_{A/{L_0},1} \overset{\pi_{1,+}^*}{\to} E_1^+ \omega_{A,i} \to E_1^+ \omega_{L_0,1} \to 0$$
et de même en remplaçant $+$ par $-$. Ces morphismes se décomposent sous l'action de $\mathbb{Z}_{p^{d_1}}$ ; pour tout $1 \leq j \leq d_1$, on a des morphismes
$$0 \to (E_1^+ \omega_{A/{L_0},1})_j \overset{\pi_{1,j,+}^*}{\to} (E_1^+ \omega_{A,i})_j \to (E_1^+ \omega_{L_0,1})_j \to 0$$
et de même en remplaçant $+$ par $-$. On a alors $v(\det \pi_{1,j,+}^*) = (\deg_j L)/n$, où $\deg_j$ est le degré partiel (voir la partie $\ref{partial}$). De plus, on a 
$$v(\det \pi_{1,j,-}^*) = (\deg_j L^\bot)/n = (\deg_j (A[\pi_1^+] / L)^D)/n = (a_1 - (a_1 - \deg_j L))/n = (\deg_j L)/n $$
On en déduit que la norme du morphisme $\omega_{A/L_0}^\kappa \to \omega_A^\kappa$ est majorée par
$$p^{ - (\sum_{1 \leq j \leq h} (k_{a_1,j,1} \deg_j L + l_{b_1,j,1} \deg_j L))/n} \leq p^{- (\deg L \inf_j (k_{a_1,j,1} + l_{b_1,j,1}))/n } \leq p^{-c \inf_j (k_{a_1,j,1} + l_{b_1,j,1})}$$
Supposons que $\pi_1$ soit dans le cas $2$. Alors $L= L_0$ est un supplémentaire de $H_{1,a_1}$ dans $A[\pi_1]$ et $O_{B,1} = $M$_n (\mathbb{Z}_{p^{2d_1}})$. On a alors une suite exacte de $O_{\overline{K}} \otimes_{\mathbb{Z}_p} \mathbb{Z}_{p^{2d_1}}$-modules
$$0 \to E_{1,1} \omega_{A/{L},1} \overset{\pi_1'^*}{\to} E_{1,1} \omega_{A,1} \to E_{1,1} \omega_{L,1} \to 0$$
Ces morphismes se décomposent sous l'action de $\mathbb{Z}_{p^{d_1}}$ ; pour tout $1 \leq j \leq d_1$, on a des morphismes
$$0 \to (E_{1,1} \omega_{A/{L},1})_j \overset{\pi_{1,j}^*}{\to} (E_{1,1} \omega_{A,1})_j \to (E_{1,1} \omega_{L,1})_j \to 0$$ 
Chacun de ces modules se décompose pour l'action de $\mathbb{Z}_{p^{2d_1}}$, et on a deux suites exactes
$$0 \to (E_{1,1} \omega_{A/{L},1})_j^+ \overset{\pi_{1,j,+}^*}{\to} (E_{1,1} \omega_{A,1})_j^+ \to (E_{1,1} \omega_{L,1})_j^+ \to 0$$ 
et de même en remplaçant $+$ par $-$. Soit $\sigma_j$ le plongement de $\mathbb{Q}_{p^{2d_1}}$ dans $\overline{\mathbb{Q}}_p$ associé au signe $+$, et $\overline{\sigma}_j$ celui associé au signe $-$. On a $v(\det \pi_{1,j,+}^*) =~(\deg_{\sigma_j} L)/n$ et $v(\det \pi_{1,j,-}^*) =~(\deg_{\overline{\sigma}_j} L)/n$. Or le groupe $L$ est totalement isotrope maximal pour l'accouplement de Weil. Étant donné la compatibilité entre l'involution de Rosati et la conjugaison complexe, on a donc $L \simeq (A[\pi_1]/L)^{D,c}$, où $c$ veut dire que l'action de $O_B$ est tordue par la conjugaison complexe. Si $\sigma$ est un plongement de $F_i$ dans $\overline{\mathbb{Q}}_p$, on a donc $\deg_\sigma L = \deg_{\overline{\sigma}} ((A[\pi_1]/L)^D) = \deg_{\overline{\sigma}} L$. On en déduit que la norme du morphisme $\omega_{A/L}^\kappa \to \omega_A^\kappa$ est majorée par
$$p^{ - (\sum_{1 \leq j \leq h} (k_{a_1,j,1} \deg_{\sigma_j} L + l_{b_1,j,1} \deg_{\overline{\sigma}_j} L))/n} \leq p^{- (\deg L \inf_j (k_{a_1,j,1} + l_{b_1,j,1}))/n } \leq p^{-c \inf_j (k_{a_1,j,1} + l_{b_1,j,1})}$$
Dans les deux cas, on obtient la majoration voulue.
\end{proof}

\subsection{Classicité}

Soit $\kappa \in X(T_M)^+$ le poids d'une forme modulaire ; il correspond à une famille d'entiers

$$\prod_{i=1}^{h}  \prod_{j=1}^{d_i} ((k_{1,j,i} \geq \dots \geq k_{a_i,j,i} ) , (l_{1,j,i} \geq \dots \geq l_{b_i,j,i} ) ) $$

Énonçons maintenant le théorème de classicité pour les variétés de Shimura de type (A).

\begin{theo} \label{theogenuni}
Soit $f$ une forme surconvergente de poids $\kappa$ sur $X_{Iw}$, propre pour la famille d'opérateurs de Hecke $U_{p,i}$. Supposons que les valeurs propres $(a_i)$ pour ces opérateurs vérifient les relations
$$ v(a_i) + d_i a_i b_i < \inf_{1 \leq j \leq d_i} (k_{a_i,j,i} + l_{b_i,j,i})$$
pour tout $1 \leq i \leq h$. Alors $f$ est classique.
\end{theo}
 
\begin{proof}
La méthode de démonstration est identique à celle des variétés de Shimura de type (C). Ecrivons rapidement les différentes étapes. \\
La forme modulaire surconvergente $f$ est sur un ouvert du type Deg$^{-1} (\prod_{1\leq i \leq h} [d_i a_i -~\varepsilon,d_i a_i])$ pour un certain $\varepsilon >0$. Nous allons étendre cette forme à $X_{Iw,rig}$ tout entier. Pour cela, nous allons étendre $f$ direction par direction, c'est-à-dire étendre $f$ à \\
$Deg^{-1} ([0,d_1a_1] \times \prod_{2\leq i \leq h} [d_i a_i -~\varepsilon,d_i a_i]  )$. Nous utiliserons pour cela le fait que $f$ est propre pour $U_{p,1}$ et la relation vérifiée par la valeur propre $a_1$. En utilisant l'opérateur $U_{p,2}$, et en répétant le processus, nous allons prolonger $f$ à $Deg^{-1} ([0,d_1a_1] \times [0,d_2a_2] \times \prod_{2\leq i \leq h} [d_i a_i -~\varepsilon,d_i a_i]  )$, et ainsi de suite, jusqu'à prolonger $f$ à tout $X_{Iw,rig}$. Détaillons le prolongement dans la première direction. \\
\underline{\'Etape $1$ : } Nous étendons la forme modulaire $f$ à l'espace Deg$^{-1} ( ]d_1 a_1 -1,d_1 a_1 ] \times~\prod_{2\leq i \leq h} [d_i a_i -~\varepsilon, d_i a_i])$. Nous utilisons pour cela la formule $f=a_1^{-N} U_{p,1}^N f$, et la proposition $\ref{augmente_A}$. Cela permet bien d'étendre $f$ à Deg$^{-1} ( ]d_1 a_1 - 1,d_1 a_1 ] \times~\prod_{2\leq i \leq h} [d_i a_i -~\varepsilon, d_i a_i])$. \\
\underline{\'Etape $2$ : } Le théorème $\ref{bigtheo_A}$ permet de définir les séries de Kassaei sur \\
$\mathcal{U} : = Deg^{-1} ( [0,d_1 a_1 - 1 + \alpha ] \times~\prod_{2\leq i \leq h} [d_i ai -~\varepsilon, d_i a_i])$, où $\alpha$ est un rationnel arbitrairement petit. Le fait que les séries de Kassaei vont converger est assurée par la proposition $\ref{norm_A}$ et la relation vérifiée par $a_1$. Cela permet donc d'étendre $f$ à $\mathcal{U}$. En recollant $f$ avec la forme définie sur $Deg^{-1} ( ]d_1 a_1 - 1,d_1 a_1 ] \times~\prod_{2\leq i \leq h} [d_i a_i -~\varepsilon, d_i a_i])$, on voit qu'on peut donc étendre $f$ à $Deg^{-1} ( [0,d_1 a_1 ] \times~\prod_{2\leq i \leq h} [d_i a_i -~\varepsilon, d_i a_i])$. \\
$ $
En répétant ce processus, on peut donc étendre $f$ à $X_{Iw,rig}$, c'est-à-dire un élément de $H^0(X_{Iw,rig},\omega^\kappa)$. Le fait que $f$ est classique provient ensuite du principe de Koecher et de GAGA, en utilisant une compactification toroïdale de $X$ (voir le théorème $\ref{toro_A}$).
\end{proof}

\bibliographystyle{amsalpha}

\end{document}